\def\MR#1#2{\href{http://www.ams.org/mathscinet-getitem?mr=#1}{#2}}%
\theoremstyle{change}%
\newtheorem{definition}{Definition:}[section]%
\newtheorem{proposition}[definition]{Proposition:}%
\newtheorem{theorem}[definition]{Theorem:}%
\newtheorem{lemma}[definition]{Lemma:}%
\newtheorem{corollary}[definition]{Corollary:}%
{\theorembodyfont{\rmfamily}\newtheorem{remark}[definition]{Remark:}}%
{\theorembodyfont{\rmfamily}}%
\newenvironment{proof}
  {{\bf Proof:}}
  {\qquad \hspace*{\fill} $\Box$}%
\newcommand{\fa}{\mathfrak{a}}%
\newcommand{\fg}{\mathfrak{g}}%
\newcommand{\fk}{\mathfrak{k}}%
\newcommand{\fl}{\mathfrak{l}}%
\newcommand{\fm}{\mathfrak{m}}%
\newcommand{\fn}{\mathfrak{n}}%
\newcommand{\fp}{\mathfrak{p}}%
\newcommand{\fs}{\mathfrak{s}}%
\newcommand{\fz}{\mathfrak{z}}%
\newcommand{\Ad}{\operatorname{Ad}}%
\newcommand{\ad}{\operatorname{ad}}%
\newcommand{\tr}{\operatorname{tr}}%
\newcommand{\id}{\operatorname{id}}
\newcommand{\inner}{\operatorname{int}}%
\newcommand{\cl}{\operatorname{cl}}%
\newcommand{\fix}{\operatorname{fix}}%
\newcommand{\Sl}{\operatorname{Sl}}%
\newcommand{\tm}{\times}%
\newcommand{\ep}{\varepsilon}%
\newcommand{\rma}{\mathrm{\bf a}}%
\newcommand{\rmd}{\mathrm{d}}%
\newcommand{\rme}{\mathrm{e}}%
\newcommand{\rmh}{\mathrm{h}}%
\newcommand{\rmR}{\mathrm{\bf R}}%
\newcommand{\rmA}{\mathrm{\bf A}}%
\newcommand{\CC}{\mathcal{C}}%
\newcommand{\EC}{\mathcal{E}}%
\newcommand{\LC}{\mathcal{L}}%
\newcommand{\MC}{\mathcal{M}}%
\newcommand{\OC}{\mathcal{O}}%
\newcommand{\QC}{\mathcal{Q}}%
\newcommand{\RC}{\mathcal{R}}%
\newcommand{\SC}{\mathcal{S}}%
\newcommand{\UC}{\mathcal{U}}%
\newcommand{\VC}{\mathcal{V}}%
\newcommand{\WC}{\mathcal{W}}%
\newcommand{\B}{\mathbb{B}}%
\newcommand{\E}{\mathbb{E}}%
\newcommand{\N}{\mathbb{N}}%
\newcommand{\F}{\mathbb{F}}%
\newcommand{\R}{\mathbb{R}}%
\newcommand{\Z}{\mathbb{Z}}%
\begin{document}

\title{Hyperbolic Chain Control Sets on Flag Manifolds\footnote{2010 \emph{Mathematics Subject Classification}. Primary: 93C15, 37D20, 37C60, 22E46}}%
\author{Adriano Da Silva\footnote{Imecc - Unicamp, Departamento de Matem\'atica, Rua S\'ergio Buarque de Holanda, 651, Cidade Universit\'aria Zeferino Vaz 13083-859, Campinas - SP, Brasil; ajsilvamat@hotmail.com}\ \ and Christoph Kawan\footnote{Courant Institute of Mathematical Sciences, New York University, 251 Mercer Street, New York City, USA; kawan@cims.nyu.edu}}%
\maketitle%

\begin{abstract}%
In this paper we study the chain control sets of right-invariant control systems on the flag manifolds of a non-compact semisimple Lie group. We prove that each chain control set is partially (skew-) hyperbolic over the associated control flow. Moreover, we characterize those chain control sets that are uniformly (skew-) hyperbolic in terms of the flag type of the control flow.%
\end{abstract}

{\small {\bf Keywords:} Chain control sets; Flag manifolds; Right-invariant control systems; hyperbolicity}

\section{Introduction}%

A right-invariant control system on a Lie group $G$ is a control-affine system whose drift and control vector fields are invariant by right translations (cf.~Jurdjevic and Sussmann \cite{JSu} for an early study of such systems). Any system of this type induces control-affine systems on the homogeneous spaces $G/P$, where $P$ is a closed subgroup. In the case that $G$ is a non-compact semisimple group and $P=P_{\Theta}$ a parabolic subgroup, i.e., $\F_{\Theta} = G/P_{\Theta}$ is a generalized flag manifold, the controllability properties of such induced systems are well understood (cf., for instance, Barros and San Martin \cite{BSM}, San Martin \cite{SM1,SM2}, San Martin and Tonelli \cite{STo}). In particular, their control and chain control sets can be described algebraically in terms of the fixed point sets of certain group elements acting on $\F_{\Theta}$. These fixed point sets are the disjoint unions of finitely many connected components each of which is a compact submanifold. Moreover, the connected components are in bijection with a double coset space of the form $\WC_H\backslash\WC/\WC_{\Theta}$, where $\WC$ is the Weyl group of $G$ and $\WC_H$, $\WC_{\Theta}$ are certain subgroups depending on the element $H$ acting on $\F_{\Theta}$ and on the subset $\Theta$ of the simple roots, which defines the parabolic subgroup $P_{\Theta}$, respectively. As a result, also the control sets and the chain control sets are parametrized by double coset spaces of this form, where $H$ has to be chosen as a characteristic element depending on the given system. In this paper, we mainly concentrate on the chain control sets which, by a general result on control-affine systems (cf.~Colonius and Kliemann \cite{CKl}), are the projections of the chain recurrent components of the associated control flow $\phi_t$ on $\UC\tm\F_{\Theta}$, where $\UC$ is the set of admissible control functions on which the shift flow $\theta_tu = u(\cdot + t)$ acts. The projection of $\phi_t$ to $\F_{\Theta}$ is a cocycle $\varphi$ over $\theta$, defined by the solutions $\varphi(\cdot,x,u)$ of the differential equations constituting the control system.  We prove the existence of an invariant continuous decomposition%
\begin{equation*}
  T_x \F_{\Theta} = \SC_{\Theta,w}(u,x) \oplus \CC_{\Theta,w}(u,x) \oplus \UC_{\Theta,w}(u,x)%
\end{equation*}
for each $(u,x)$ in the chain recurrent component over the chain control set associated to a Weyl group element $w$ (or the corresponding double coset, respectively). Here we have uniform contraction on $\SC_{\Theta,w}$ and uniform expansion on $\UC_{\Theta,w}$, while on $\CC_{\Theta,w}$ directions corresponding to zero Lyapunov exponents exist. In particular, we are interested in the case when $\CC_{\Theta,w}$ is trivial, i.e., the case of uniform hyperbolicity. As it turns out, this happens if and only if $\langle\Theta(\phi)\rangle\subset w\langle\Theta\rangle$, where $\Theta(\phi)$ is a characteristic subset of the simple roots, depending on the control flow $\phi$, the so-called \emph{flag type} of the flow. By a general result proved in Colonius and Du \cite{CDu}, under appropriate regularity assumptions, in this case the chain control set is the closure of a control set. Our main motivation to characterize the hyperbolic chain control sets is the fact that there exist good estimates for their invariance entropy (as introduced in Colonius and Kawan \cite{CKa}, see also the monograph Kawan \cite{Kaw}), and our aim is to use the characterization worked out in this paper to provide a formula for their entropy in a future work.%

The paper is structured as follows: In Section \ref{sec_dynctrl} we recall basic facts about dynamical and control systems, and in Section \ref{sec_semisimple} about semisimple Lie groups and their flag manifolds. Section \ref{sec_flagbundleflows} describes the chain recurrent components for flows of automorphisms on flag bundles and introduces the so-called $\fa$-cocycle over the flow on the maximal flag bundle, which is a vector-valued additive cocycle with values in the maximal abelian subspace associated with the Lie algebra of the structural group. Furthermore, two associated cocycles for the flows on the partial flag bundles are introduced, which are used to describe the unstable and stable determinants for the flow on the hyperbolic chain control sets. Finally, in Section \ref{sec_hyperbolic}, it is proved that each chain control set has a partially hyperbolic structure (in a loose sense) and the hyperbolic chain control sets are characterized. Furthermore, an independent proof of the fact that a hyperbolic chain control set is the closure of a control set is given for the flag case.%

Some remarks on notation: We write $\Z$, $\N$ and $\R$ for the sets of integers, positive integers and real numbers, respectively, and $\R^d$ for the $d$-dimensional Euclidean space.
If $V$ is a finite-dimensional real vector space, $V^*$ denotes its dual space, the space of real-valued linear functionals on $V$. If $M$ is a smooth manifold, we denote by $T_xM$ the tangent space at $x\in M$, and by $TM$ the tangent bundle of $M$. If $\varphi:M\rightarrow N$ is a differentiable map between smooth manifolds, we write $(\rmd\varphi)_x:T_xM \rightarrow T_{\varphi(x)}N$ for its derivative at $x\in M$. Furthermore, we write $\cl A$ for the closure of a set (in a metric space), and $\inner A$ for its interior. If $\phi$ is a continuous flow on a metric space $X$, we write $\omega(x)$ and $\alpha(x)$ for the $\omega$- and $\alpha$-limit set of a point $x\in X$, i.e., the sets of limit points of the forward and backward trajectories, respectively.%

\section{Dynamical and Control Systems}\label{sec_dynctrl}%

In this section, we recall well-known facts about flows on metric spaces and control-affine systems that can be found, e.g., in Colonius and Kliemann \cite{CKl}.%

\subsection{Morse Decompositions}%

Let $\phi:\R\tm X\rightarrow X$, $(t,x)\mapsto\phi_t(x)$, be a continuous flow on a compact metric space $(X,d)$. A compact set $K\subset X$ is called \emph{isolated invariant} if it is invariant, i.e., $\phi_t(K)\subset K$ for all $t\in\R$, and if there is a neighborhood $N$ of $K$ such that the implication%
\begin{equation*}
  \phi_t(x) \in N \mbox{\ for all\ } t\in\R \quad\Rightarrow\quad x\in K%
\end{equation*}
holds. A \emph{Morse decomposition} of $\phi$ is a finite collection $\{\MC_1,\ldots,\MC_n\}$ of nonempty pairwise disjoint isolated invariant compact sets satisfying:%
\begin{enumerate}
\item[(A)] For all $x\in X$, the $\alpha$- and $\omega$-limit sets $\alpha(x)$ and $\omega(x)$, respectively, are contained in $\bigcup_{i=1}^n\MC_i$.%
\item[(B)] Suppose there are $\MC_{j_0},\ldots,\MC_{j_l}$ and $x_1,\ldots,x_l\in X\backslash\bigcup_{i=1}^n\MC_i$ with%
\begin{equation*}
  \alpha(x_i) \in \MC_{j_{i-1}} \mbox{\quad and\quad } \omega(x_i) \in \MC_{j_i}%
\end{equation*}
for $i=1,\ldots,l$. Then $\MC_{j_0}\neq\MC_{j_l}$.%
\end{enumerate}
The elements of a Morse decomposition are called \emph{Morse sets}. We say that a compact invariant set $A$ is an \emph{attractor} if it admits a neighborhood $N$ such that $\omega(N)=A$. A \emph{repeller} is a compact invariant set $R$ which has a neighborhood $N^*$ with $\alpha(N^*) = R$. A Morse decomposition is \emph{finer} than another one if every element of the second one contains one of the first.%

Morse decompositions are related to the chain recurrent set of the flow. Recall that an \emph{$(\ep,T)$-chain} from $x\in X$ to $y\in X$ is given by $n\in\N$, points $x_0,x_1,\ldots,x_n \in X$ with $x_0 = x$ and $x_n=y$, and times $T_0,\ldots,T_{n-1}\geq T$ such that $d(\phi_{T_i}(x_i),x_{i+1})<\ep$ for $i=0,1,\ldots,n-1$. A subset $Y\subset X$ is \emph{chain transitive} if for all $x,y\in Y$ and $\ep,T>0$ there exists an $(\ep,T)$-chain from $x$ to $y$. A point $x\in X$ is \emph{chain recurrent} if for all $\ep,T>0$ there exists an $(\ep,T)$-chain from $x$ to $x$. The \emph{chain recurrent set $\RC = \RC(\phi)$} is the set of all chain recurrent points. Then the connected components of $\RC$ coincide with the maximal chain transitive subsets, which are also called the \emph{chain recurrent components} of $\phi$. A finest Morse decomposition for $\phi$ exists iff there are only finitely many chain recurrent components. In this case, the Morse sets coincide with the chain recurrent components (cf.~also \cite[Thm.~3.17]{PSM} for this result in a more general context, where the space $X$ is only assumed to be compact Hausdorff).%

\subsection{Control-Affine Systems}\label{subsec_cas}%

A \emph{control-affine system} is given by a family%
\begin{equation}\label{eq_cas}
  \dot{x}(t) = f_0(x(t)) + \sum_{i=1}^mu_i(t)f_i(x(t)),\quad u\in\UC,%
\end{equation}
of ordinary differential equations. Here $f_0,f_1,\ldots,f_m$ are $\CC^k$-vector fields ($k\geq1$) on a finite-dimensional smooth manifold $M$, the \emph{state space} of the system. $f_0$ is called the \emph{drift vector field} and $f_1,\ldots,f_m$ the \emph{control vector fields}. We assume that the set $\UC$ of \emph{admissible control functions} is given by%
\begin{equation*}
  \UC = \left\{u:\R\rightarrow\R^m\ :\ u \mbox{ is measurable with } u(t)\in U \mbox{ a.e.}\right\},%
\end{equation*}
where $U\subset\R^m$ is a compact and convex set with $0\in\inner U$. For each $u\in\UC$ the corresponding (Carath\'eodory) differential equation \eqref{eq_cas} has a unique solution $\varphi(t,x,u)$ with initial value $x = \varphi(0,x,u)$. The systems considered in this paper all have globally defined solutions, which give rise to a map%
\begin{equation*}
  \varphi:\R \tm M \tm \UC \rightarrow M,\quad (t,x,u) \mapsto \varphi(t,x,u),%
\end{equation*}
called the \emph{transition map} of the system. We also use the notation $\varphi_{t,u}:M\rightarrow M$ for the map $x\mapsto\varphi(t,x,u)$. If the vector fields $f_0,f_1,\ldots,f_m$ are of class $\CC^k$, then $\varphi$ is of class $\CC^k$ with respect to the state variable and the corresponding partial derivatives of order $1$ up to $k$ depend continuously on $(t,x,u) \in \R\tm M\tm\UC$ (cf.~\cite[Thm.~1.1]{Kaw}).%

The transition map $\varphi$ is a cocycle over the shift flow%
\begin{equation*}
  \theta:\R \tm \UC \rightarrow \UC,\quad (t,u) \mapsto \theta_tu = u(\cdot + t),%
\end{equation*}
i.e., it satisfies%
\begin{equation*}
  \varphi(t+s,x,u) = \varphi(s,\varphi(t,x,u),\theta_tu) \mbox{\quad for all\ } t,s\in\R,\ x\in M,\ u\in\UC.%
\end{equation*}
Together with the shift flow, $\varphi$ constitutes a continuous skew-product flow%
\begin{equation*}
  \phi:\R \tm \UC \tm M \rightarrow \UC \tm M,\quad (t,u,x) \mapsto (\theta_tu,\varphi(t,x,u)),%
\end{equation*}
where $\UC$ is endowed with the weak$^*$-topology of $L^{\infty}(\R,\R^m) = L^1(\R,\R^m)^*$, which gives $\UC$ the structure of a compact metrizable space. The flow $\phi$ is called the \emph{control flow} of the system (cf.~\cite{CKl,Kaw}). The base flow $\theta$ is chain transitive.%

We write%
\begin{equation*}
  \OC^+_{\leq\tau}(x) := \left\{y\in M\ |\ \exists t\in[0,\tau],\ u\in\UC:\ \varphi(t,x,u) = y\right\}%
\end{equation*}
for the \emph{set of points reachable from $x$ up to time $\tau$}. The \emph{positive orbit} of $x$ is given by%
\begin{equation*}
  \OC^+(x) := \bigcup_{\tau>0}\OC^+_{\leq\tau}(x).%
\end{equation*}
Analogously, we define the \emph{set of points controllable to $x$ within time $\tau$} by%
\begin{equation*}
  \OC^-_{\leq\tau}(x) := \left\{y\in M\ |\ \exists t\in[0,\tau],\ u\in\UC:\ \varphi(t,y,u) = x\right\},%
\end{equation*}
and the \emph{negative orbit of $x$} by%
\begin{equation*}
  \OC^-(x) := \bigcup_{\tau>0}\OC^-_{\leq\tau}(x).%
\end{equation*}

In the following, we fix a metric $d$ on $M$ (not necessarily a Riemannian distance). A set $D\subset M$ is called \emph{controlled invariant (in forward time)} if for each $x\in D$ there exists $u\in\UC$ with $\varphi(t,x,u)\in D$ for all $t\geq0$. It is called a \emph{control set} if it satisfies the following properties:%
\begin{enumerate}
\item[(A)] $D$ is controlled invariant.%
\item[(B)] Approximate controllability holds on $D$, i.e., $D\subset\cl\OC^+(x)$ for all $x\in D$.%
\item[(C)] $D$ is maximal (w.r.t.~set inclusion) with the properties (A) and (B).%
\end{enumerate}

A set $E\subset M$ is called \emph{full-time controlled invariant} if for each $x\in E$ there exists $u\in\UC$ with $\varphi(\R,x,u)\subset E$. The \emph{full time lift} $\EC$ of $E$ is defined by%
\begin{equation*}
  \EC = \mathrm{Lift}(E) := \left\{ (u,x) \in \UC \tm M\ :\ \varphi(\R,x,u) \subset E \right\},%
\end{equation*}
which is easily seen to be compact and $\phi$-invariant. For points $x,y\in M$ and numbers $\ep,\tau>0$, a \emph{controlled $(\ep,\tau)$-chain from $x$ to $y$} is given by $n\in\N$, points $x_0,\ldots,x_n\in M$, control functions $u_0,\ldots,u_{n-1}\in\UC$, and times $t_0,\ldots,t_{n-1}\geq\tau$ such that $x_0 = x$, $x_n = y$, and $d(\varphi(t_i,x_i,u_i),x_{i+1}) < \ep$ for $i=0,\ldots,n-1$. A set $E\subset M$ is called a \emph{chain control set} if it satisfies the following properties:%
\begin{enumerate}
\item[(A)] $E$ is full-time controlled invariant%
\item[(B)] For all $x,y\in E$ and $\ep,\tau>0$ there exists an $(\ep,\tau)$-chain from $x$ to $y$ in $M$.%
\item[(C)] $E$ is maximal (w.r.t.~set inclusion) with the properties (A) and (B).%
\end{enumerate}
Every chain control set is closed, while this is not the case for control sets. Moreover, every control set with nonempty interior is contained in a chain control set if local accessibility holds, and we have the following result.%

\begin{proposition}\label{prop_ccs}
If $M$ is compact, then the full-time lift $\EC$ of a chain control set $E$ is a chain recurrent component of the control flow $\phi$. Conversely, if $\EC\subset\UC\tm M$ is a chain recurrent component of $\phi$, then the projection%
\begin{equation*}
  E = \left\{x\in M\ :\ \exists u\in\UC \mbox{ with } (u,x)\in\EC\right\}%
\end{equation*}
of $\EC$ to $M$ is a chain control set.%
\end{proposition}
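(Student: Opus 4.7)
The argument consists of translating between two notions of ``chain'': controlled $(\ep,\tau)$-chains in $M$, where jumps between distinct control functions are allowed at switching points, and ordinary $(\ep,\tau)$-chains of the skew-product flow $\phi$ on the compact metric space $\UC\tm M$. The translation rests on two facts already in hand: the joint continuity of $(t,x,u)\mapsto\varphi(t,x,u)$ with the weak$^*$-topology on $\UC$, and the property that this weak$^*$-metric can be chosen so that closeness of $u,v\in\UC$ is controlled only by pairings with $L^1$-functions supported in a prescribed finite time window.

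\textbf{Forward direction.} Let $E$ be a chain control set and set $\EC=\mathrm{Lift}(E)$. To show $\EC$ is chain transitive for $\phi$, take $(u,x),(v,y)\in\EC$ and $\ep,\tau>0$. Property (B) of $E$ yields, for any $\ep',\tau'>0$, a controlled $(\ep',\tau')$-chain $x=x_0,\ldots,x_n=y$ with controls $u_0,\ldots,u_{n-1}$ and jump times $t_i\geq\tau'$. I lift this to an $(\ep,\tau)$-chain of $\phi$ by selecting $\tilde u_i\in\UC$ with $\tilde u_i|_{[0,t_i]}=u_i|_{[0,t_i]}$, which preserves $\varphi(t_i,x_i,\tilde u_i)=\varphi(t_i,x_i,u_i)$, while choosing the tail of $\tilde u_i$ outside $[0,t_i]$ so that $\theta_{t_i}\tilde u_i$ is as close as desired, in weak$^*$, to the next element $\tilde u_{i+1}$; taking the $t_i$ large enough suffices, since shifting by $t_i$ pushes the prescribed initial segment far outside the window controlling the metric. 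The endpoints are matched with $u$ and $v$ analogously. Consequently $\EC$ lies in some chain recurrent component $\EC^\circ$ of $\phi$. For the reverse inclusion, observe that the projection $E^\circ$ of $\EC^\circ$ to $M$ is full-time controlled invariant (by invariance of $\EC^\circ$) and satisfies property (B) (by chain transitivity of $\EC^\circ$ combined with continuity of $\varphi$). Maximality of $E$ forces $E^\circ\subset E$, hence $\EC^\circ\subset\mathrm{Lift}(E^\circ)\subset\EC$, and equality follows.

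\textbf{Converse direction.} Let $\EC$ be a chain recurrent component of $\phi$ and $E$ its projection to $M$. If $x\in E$ and $(u,x)\in\EC$, invariance of $\EC$ gives $\varphi(\R,x,u)\subset E$, so $E$ is full-time controlled invariant. For property (B), given $x,y\in E$ and $\ep,\tau>0$, choose $(u,x),(v,y)\in\EC$ and apply chain transitivity of $\EC$ to obtain an $(\ep',\tau)$-chain $(u_i,x_i)$ in $\UC\tm M$ from $(u,x)$ to $(v,y)$; the $M$-component is then a controlled $(\ep,\tau)$-chain from $x$ to $y$ once $\ep'$ is chosen small enough. Maximality of $E$ follows from that of $\EC$: any strictly larger set $E'\supsetneq E$ with properties (A) and (B) would extend to a chain control set $\tilde E$, whose lift by the forward direction would be a chain recurrent component strictly containing $\EC$, contradicting its maximality.

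\textbf{Main obstacle.} The technical heart is the lifting lemma in the forward direction---constructing $\tilde u_i$ whose initial segment is prescribed on $[0,t_i]$ and whose shift by $t_i$ approximates an arbitrary target in weak$^*$. Because the weak$^*$-metric on $\UC$ can be chosen to depend only on pairings with $L^1$-functions in a bounded time window, the freedom to take $t_i$ arbitrarily large (while respecting $t_i\geq\tau$) absorbs exactly this discrepancy. Once the lemma is in place, both directions are a routine unwinding of the definitions.
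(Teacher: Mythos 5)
The paper itself does not prove this proposition; it is recalled from Colonius and Kliemann \cite{CKl} (the equivalence between chain control sets and chain recurrent components of the control flow). So there is no in-paper proof to compare against, and your attempt must be judged on its own merits. The overall architecture — lift controlled chains via a concatenation-plus-weak$^*$-window argument, then use maximality both ways — is the right one, and your converse direction and both maximality arguments are sound.

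However, the justification of the lifting lemma contains a genuine gap. First, the stated mechanism is not what makes the intermediate jumps small. If $\tilde u_i|_{[0,t_i]}$ is prescribed, then $\theta_{t_i}\tilde u_i$ carries that prescribed segment onto $[-t_i,0]$, which is \emph{not} pushed outside the window $[-T(\ep),T(\ep)]$ — it covers the entire left half $[-T(\ep),0]$ of it, no matter how large $t_i$ is. What actually works is a dovetailing of free and prescribed pieces: on $[-T(\ep),0]$ the value of $\theta_{t_i}\tilde u_i$ is prescribed but the negative part of $\tilde u_{i+1}$ is free to match it, while on $[0,T(\ep)]$ the situation is reversed. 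Taking $t_i$ large plays no role here. Second, and more seriously, this dovetailing fails at both endpoints, because there $\tilde u_0=u$ and $\tilde u_n=v$ are \emph{fully prescribed}, with no free tail on either side. The over-determination cannot be absorbed by choosing $t_i$ large, and "analogously" does not resolve it. The correct repair uses precisely the hypothesis $(u,x),(v,y)\in\EC$: since $\varphi(\R,x,u)\subset E$ and $\varphi(\R,y,v)\subset E$, one can prepend a buffer flow segment with control $u$ (and append one with a backward shift of $v$) before invoking property (B); the controlled chain is then arranged to begin with $u$ and end with $\theta_{-\tau'}v$, so that the window-matching at the two ends reduces to agreement of a control with a shift of itself, which is automatic. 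Without identifying this buffer-segment device, the forward direction is not complete.
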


System \eqref{eq_cas} is called \emph{locally accessible at $x$} provided that for all $\tau>0$ the sets $\OC^+_{\leq\tau}(x)$ and $\OC^-_{\leq\tau}(x)$ have nonempty interiors. It is called \emph{locally accessible} if it is locally accessible at every point $x\in M$. If the vector fields $f_0,f_1,\ldots,f_m$ are of class $\CC^{\infty}$, the \emph{Lie algebra rank condition} (Krener's criterion) guarantees local accessibility: Let $\LC=\LC(f_0,f_1,\ldots,f_m)$ denote the smallest Lie algebra of vector fields on $M$ containing $f_0,f_1,\ldots,f_m$. If $\LC(x) := \{f(x) : f\in\LC\} = T_xM$, then the system is locally accessible at $x$. If $f_0,f_1,\ldots,f_m$ are analytic vector fields, the criterion is also necessary.%

In the following, we assume that $M$ is endowed with a Riemannian metric. We call a compact full-time controlled invariant set $Q$ \emph{(uniformly) hyperbolic} if for each $(u,x)\in\QC = \mathrm{Lift}(Q)$ there exists a decomposition%
\begin{equation*}
  T_xM = E^-_{u,x} \oplus E^+_{u,x}%
\end{equation*}
such that the following properties hold:%
\begin{enumerate}
\item[(H1)] $(\rmd\varphi_{t,u})_x E^{\pm}_{u,x} = E^{\pm}_{\phi_t(u,x)}$ for all $t\in\R$ and $(u,x)\in\QC$.%
\item[(H2)] There exist constants $c,\lambda>0$ such that for all $(u,x)\in\QC$ we have%
\begin{equation*}
  \left\|(\rmd\varphi_{t,u})_x v\right\| \leq c^{-1}\rme^{-\lambda t}\|v\| \mbox{\quad for all\ } t\geq0,\ v\in E^-_{u,x},%
\end{equation*}
and%
\begin{equation*}
  \left\|(\rmd\varphi_{t,u})_x v\right\| \geq c\rme^{\lambda t}\|v\| \mbox{\quad for all\ } t\geq0,\ v\in E^+_{u,x}.%
\end{equation*}
\item[(H3)] The linear subspaces $E^{\pm}_{u,x}$ depend continuously on $(u,x)$, i.e., the projections $\pi^{\pm}_{u,x}:T_xM \rightarrow E^{\pm}_{u,x}$ along $E^{\mp}_{u,x}$ depend continuously on $(u,x)$.%
\end{enumerate}
Just as for classical hyperbolic sets (of autonomous dynamical systems), it can be shown that (H3) actually follows from (H1) and (H2). In particular, the subspaces $E^{\pm}_{u,x}$ are the fibers of subbundles $E^{\pm}\rightarrow\QC$ of the vector bundle%
\begin{equation*}
  \bigcup_{(u,x)\in\QC}\{u\}\tm T_xM \rightarrow \QC,\quad (u,v) \mapsto (u,\pi_{TM}(v)),%
\end{equation*}
with the base point projection $\pi_{TM}:TM \rightarrow M$. (cf.~also \cite[Sec.~6.3]{Kaw}).%

\section{Semisimple Lie Theory}\label{sec_semisimple}%

In this section, we recall basic facts on semisimple Lie groups and their flag manifolds. Throughout the paper, we will only consider connected non-compact semisimple groups $G$ with finite center. With regard to the actions of these groups on their flag manifolds, the assumption of a finite center causes no loss of generality, since the flag manifolds can be represented as orbits in Grassmann manifolds of subspaces of the Lie algebra $\fg$ under the action of $\Ad(G) \cong G/Z(G)$ and $G$ also acts on them by the adjoint action.%

Standard references for the theory of semisimple Lie groups and their flag manifolds are Duistermaat, Kolk and Varadarajan \cite{DKV}, Helgason \cite{Hel} and Warner \cite{War}.%

\subsection{Semisimple Lie Groups}

Let $G$ be a connected semisimple non-compact Lie group $G$ with finite center. The associated Lie algebra is denoted by $\fg$. We choose a Cartan involution $\theta:\fg\rightarrow\fg$, which yields the inner product $B_{\theta}(X,Y) = -C(X,\theta Y)$, where $C(X,Y) = \tr(\ad(X)\ad(Y))$ is the Cartan-Killing form. Since $\theta$ is self-adjoint with respect to $B_{\theta}$ and $\theta^2 = \id$, we have the Cartan decomposition%
\begin{equation*}
  \fg = \fk \oplus \fs,%
\end{equation*}
where $\fk$ is the $1$-eigenspace and $\fs$ the $(-1)$-eigenspace of $\theta$. Note that $\fk$ is a subalgebra of $\fg$, but $\fs$ only a linear subspace. We also have an associated Cartan decomposition $G = KS$ of the group with $K = \exp\fk$ and $S = \exp\fs$. The endomorphisms $\ad(X)$, $X\in\fk$, are skew-symmetric w.r.t.~$B_{\theta}$ and hence the automorphisms $\Ad(k)$, $k\in K$, are isometries. Let $\fa \subset \fs$ be a maximal abelian subspace, and for each $\alpha\in\fa^*$ put%
\begin{equation*}
  \fg_{\alpha} := \left\{X \in \fg\ :\ \ad(H)X = \alpha(H)X,\ \forall H\in\fa\right\}.%
\end{equation*}
The endomorphisms $\ad(H)$, $H\in\fa$, are self-adjoint with respect to $B_{\theta}$ and they commute. Hence, we can diagonalize them simultaneously and the nontrivial $\fg_{\alpha}$'s are the associated eigenspaces. The set%
\begin{equation*}
  \Pi := \left\{\alpha\in\fa^*\backslash\{0\} :\ \fg_{\alpha} \neq 0\right\}%
\end{equation*}
is called the \emph{set of roots of $G$}. The associated spaces $\fg_{\alpha}$, $\alpha\in\Pi$, are called \emph{root spaces}. The set of \emph{regular elements of $\fa$} is given by%
\begin{equation*}
  \left\{ H\in\fa\ : \ \alpha(H)\neq0,\ \forall \alpha\in\Pi \right\}.%
\end{equation*}
The connected components of this set, called \emph{Weyl chambers}, are convex cones. Choosing an (arbitrary) Weyl chamber $\fa^+$, it makes sense to define%
\begin{equation*}
  \alpha > 0 \qquad :\Leftrightarrow \qquad \alpha|_{\fa^+} > 0,\quad \alpha\in\Pi,%
\end{equation*}
since $\alpha\in\Pi$ cannot change its sign on a Weyl chamber. Then the \emph{sets of positive and negative roots}, respectively, are%
\begin{equation*}
  \Pi^+ := \left\{\alpha\in\Pi\ :\ \alpha>0\right\} \mbox{\quad and \quad} \Pi^- := -\Pi^+,%
\end{equation*}
and we have a disjoint union $\Pi = \Pi^+ \cup \Pi^-$. Moreover, we define the nilpotent subalgebras%
\begin{equation*}
  \fn^{\pm} := \sum_{\alpha\in\Pi^{\pm}}\fg_{\alpha},%
\end{equation*}
which yield the \emph{Iwasawa decomposition} of the Lie algebra:%
\begin{equation*}
  \fg = \fk \oplus \fa \oplus \fn^{\pm} \mbox{\quad (either } \fn^+ \mbox{ or } \fn^-).%
\end{equation*}
To each of these subalgebras there belongs a connected Lie subgroup of $G$, denoted by $K$, $A$ and $N^{\pm}$, respectively. This gives the Iwasawa decomposition $G = KAN^{\pm}$ of the group. The \emph{Weyl group} $\WC$ of $G$ is the group generated by the orthogonal reflections at the hyperplanes $\ker\alpha$, $\alpha\in\Pi$, or alternatively the quotient $M^*/M$, where $M^*$ and $M$ are the normalizer and the centralizer of $\fa$ in $K$, respectively, i.e., $M^* = \{k\in K : \Ad(k)\fa=\fa\}$ and $M = \{k\in K : \Ad(k)H=H,\ \forall H\in\fa\}$. A corresponding group isomorphism is obtained by associating to $kM \in M^*/M$ the automorphism $\Ad(k)|_{\fa}$ of $\fa$. The Weyl group acts simply transitively on the Weyl chambers. It also acts on the roots by%
\begin{equation*}
  w\alpha(H) = \alpha(w^{-1}H),\quad \forall H\in\fa,\ \alpha\in\Pi.%
\end{equation*}
We let $\Sigma \subset \Pi^+$ denote the set of those positive roots which cannot be written as linear combinations of other positive roots, and we call $\Sigma$ the \emph{set of simple roots}, which forms a basis of $\fa^*$. There exists a unique element $w_0\in\WC$, called the \emph{principal involution}, which takes $\Sigma$ to $-\Sigma$.%

\subsection{Flag Manifolds}%

Let $\Theta \subset \Sigma$ be an arbitrary subset. Then we write $\langle\Theta\rangle$ for the set of roots which are linear combinations (over $\Z$) of elements in $\Theta$. Moreover, we put%
\begin{equation*}
  \fa(\Theta) := \langle H_{\alpha}\ : \ \alpha\in\Theta \rangle,%
\end{equation*}
where $H_{\alpha}\in\fa$ is the coroot of $\alpha$, defined by $B_{\theta}(H_{\alpha},H) = \alpha(H)$, $H\in\fa$. Then $\fg(\Theta)$ is defined as the (semisimple) subalgebra generated by $\fa(\Theta) \oplus \sum_{\alpha\in\langle\Theta\rangle}\fg_{\alpha}$. We put $\fk(\Theta) := \fk \cap \fg(\Theta)$ and $\fn^{\pm}(\Theta) := \fn^{\pm} \cap \fg(\Theta)$. Then $\fg(\Theta)$ is the Lie algebra of a semisimple Lie group $G(\Theta)\subset G$, and $\fg(\Theta) = \fk(\Theta)\oplus\fa(\Theta)\oplus\fn^{\pm}(\Theta)$ is an Iwasawa decomposition of $\fg(\Theta)$, while $\Theta$ is the corresponding set of simple roots. We write $K(\Theta)$ for the connected Lie subgroup with Lie algebra $\fk(\Theta)$ and $A(\Theta) = \exp\fa(\Theta)$, $N^{\pm}(\Theta) = \exp\fn^{\pm}(\Theta)$ (which are also connected subgroups). Then $G(\Theta) = K(\Theta)A(\Theta)N^{\pm}(\Theta)$ is an Iwasawa decomposition of $G(\Theta)$. Let%
\begin{equation*}
  \fa_{\Theta} := \left\{ H\in\fa\ :\ \alpha(H)=0,\ \forall \alpha \in \Theta \right\}%
\end{equation*}
be the orthogonal complement of $\fa(\Theta)$ and note that%
\begin{equation*}
  \fa(\Theta_1 \cap \Theta_2) = \fa(\Theta_1) \cap \fa(\Theta_2) \mbox{\quad and\quad } \fa_{\Theta_1 \cap \Theta_2} = \fa_{\Theta_1} + \fa_{\Theta_2}.%
\end{equation*}
The subset $\Theta$ singles out a subgroup $\WC_{\Theta}$ of $\WC$ consisting of those elements which act trivially on $\fa_{\Theta}$. Alternatively, $\WC_{\Theta}$ can be defined as the subgroup generated by the reflections at the hyperplanes $\ker\alpha$, $\alpha\in\Theta$. Then $\WC_{\Theta}$ is isomorphic to the Weyl group $\WC(\Theta)$ of $G(\Theta)$. We let $Z_{\Theta}$ denote the centralizer of $\fa_{\Theta}$ in $G$ and $K_{\Theta} = Z_{\Theta} \cap K$, which implies%
\begin{equation}\label{eq_kthetaintersec}
  K_{\Theta_1 \cap \Theta_2} = K_{\Theta_1} \cap K_{\Theta_2}.%
\end{equation}
An Iwasawa decomposition of $Z_{\Theta}$ (which is a reductive Lie group) is given by%
\begin{equation}\label{eq_ztheta_iwasawa}
  Z_{\Theta} = K_{\Theta}AN^{\pm}(\Theta).%
\end{equation}
The \emph{parabolic subalgebra of type $\Theta\subset\Sigma$} is defined by%
\begin{equation*}
  \fp_{\Theta} := \fn^-(\Theta) \oplus \fp,\quad \fp := \fm \oplus \fa \oplus \fn^+,%
\end{equation*}
where $\fm$ is the Lie algebra of $M$, the centralizer of $\fa$ in $\fk$, or respectively, the part of the common $0$-eigenspace of the maps $\ad(H)$, $H\in\fa$, contained in $\fk$. The associated \emph{parabolic subgroup} $P_{\Theta}$ is the normalizer of $\fp_{\Theta}$ in $G$. Then $\fp_{\Theta}$ is the Lie algebra of $P_{\Theta}$.%

The empty set $\Theta=\emptyset$ yields the minimal parabolic subalgebra $\fp_{\emptyset} = \fp$. An Iwasawa decomposition of the associated subgroup $P$ is $P = MAN^+$. A corresponding decomposition, called \emph{Langlands decomposition}, of $P_{\Theta}$ is given by%
\begin{equation}\label{eq_langlands}
  P_{\Theta} = K_{\Theta}AN^+.%
\end{equation}
The \emph{flag manifold of type $\Theta$} is the $\Ad(G)$-orbit%
\begin{equation*}
  \F_{\Theta} := \Ad(G)\fp_{\Theta}%
\end{equation*}
with base point $b_{\Theta} := \fp_{\Theta}$ in the Grassmann manifold of $(\dim\fp_{\Theta})$-dimensional subspaces of $\fg$. In case $\Theta=\emptyset$ we also write $b_0 = b_{\Theta}$ and $\F = \F_{\emptyset}$ (the maximal flag manifold). Since the isotropy group of $b_{\Theta}$ is the subgroup $P_{\Theta}$, $\F_{\Theta}$ can be identified with the homogeneous space $G/P_{\Theta}$ via $\Ad(g)\fp_{\Theta}\mapsto gP_{\Theta}$. If $\Theta_1 \subset \Theta_2$, then $P_{\Theta_1}\subset P_{\Theta_2}$ and the projection%
\begin{equation*}
  \pi^{\Theta_1}_{\Theta_2}:\F_{\Theta_1} \rightarrow \F_{\Theta_2},\quad gP_{\Theta_1} \mapsto gP_{\Theta_2},%
\end{equation*}
is well-defined. In case $\Theta_1=\emptyset$, we just write $\pi_{\Theta_2}$ for this map.%

Note that the choice of the above subalgebras and subgroups is not unique. In fact, by conjugation with an element $k\in K$ one obtains a new set of subalgebras and subgroups. For instance, $\Ad(k)\fa$ is another maximal abelian subspace of $\fs$ and $\Ad(k)\fa^+$ an associated positive Weyl chamber. Also $\fg = \fk \oplus \Ad(k)\fa \oplus \Ad(k)\fn^{\pm}$ is another Iwasawa decomposition. We will use these conjugated settings frequently.%

Alternatively, one can define a flag manifold by choice of an element $H\in\cl\fa^+$. One puts%
\begin{equation*}
  \Theta(H) := \{\alpha\in\Sigma\ :\ \alpha(H) = 0\}%
\end{equation*}
and denotes the above subalgebras and groups replacing $\Theta$ by $H$, for instance, $\fp_H := \fp_{\Theta(H)}$. Conversely, given a subset $\Theta\subset\Sigma$, there exists a \emph{characteristic element} $H_{\Theta} \in \cl\fa^+$ such that $\Theta = \Theta(H_{\Theta})$. We let $Z_H$, $K_H$, and $\WC_H$ denote the centralizer of $H$ in $G$, $K$ and $\WC$, respectively. Furthermore, we introduce%
\begin{equation*}
  \fn^{\pm}_{\Theta} := \sum_{\alpha\in\Pi^{\pm}\backslash\langle\Theta\rangle}\fg_{\alpha}%
\end{equation*}
and note that $N^{\pm}(\Theta)$ centralizes $\fn^{\mp}_{\Theta}$.%

Finally, we briefly describe the construction of a $K$-invariant Riemannian metric on $\F_{\Theta}$. We have that $\fg = \fn^-_{\Theta} \oplus \fp_{\Theta}$. The group $K_{\Theta}$ is the isotropy subgroup of $b_{\Theta}$ in $K$, $\fn^-_{\Theta}$ is a $K_{\Theta}$-invariant subspace, and the restriction of $B_{\theta}$ to $\fn^-_{\Theta}$ is a $K_{\Theta}$-invariant inner product. By a standard construction, this allows to define a $K$-invariant metric on $\F_{\Theta}$ such that%
\begin{equation}\label{eq_kinvmetric}
  \langle (\rmd\pi_{\Theta})_1Y,(\rmd\pi_{\Theta})_1 Z \rangle_{b_{\Theta}} = B_{\theta}(Y,Z),\quad \forall Y,Z\in\fn^-_{\Theta}.% 
\end{equation}

\section{Flows on Flag Bundles}\label{sec_flagbundleflows}%

Let $G$ be a connected semisimple non-compact Lie group $G$ with finite center. We choose a Cartan involution $\theta$, a maximal abelian subspace $\fa$, and a positive Weyl chamber $\fa^+$. An element of $\fg$ of the form $Y = \Ad(g)H$ with $g\in G$ and $H\in\cl\fa^+$ is called a \emph{split element}. Analogously, an element of $G$ of the form $ghg^{-1}$ with $h\in\cl A^+$, $A^+ = \exp\fa^+$, is called a split element. The flow $\exp(tH)$ induced by a split element $H\in\cl\fa^+$ on $\F_{\Theta}$, is given by%
\begin{equation*}
  (t,\Ad(g)\fp_{\Theta}) \mapsto \Ad(\rme^{tH}g)\fp_{\Theta}.%
\end{equation*}
The associated vector field can be shown to be a gradient vector field with respect to an appropriate Riemannian metric on $\F_{\Theta}$. The connected components of the fixed point set of this flow are given by%
\begin{equation*}
  \fix_{\Theta}(H,w) = Z_H \cdot wb_{\Theta} = K_H \cdot wb_{\Theta},\quad w \in \WC.%
\end{equation*}
The sets $\fix_{\Theta}(H,w)$ are easily shown to be in bijection with the double coset space $\WC_H\backslash \WC/\WC_{\Theta}$. Each component $\fix_{\Theta}(H,w)$ is a compact and connected submanifold of $\F_{\Theta}$. If $H \in \Ad(k)\cl\fa^+$ for some $k\in K$, we have to work with the conjugated Iwasawa decomposition and hence in this case%
\begin{equation*}
  \fix_{\Theta}(H,w) = Z_H \cdot \Ad(k)wb_{\Theta}.%
\end{equation*}

We consider now a $G$-principal bundle $\pi:Q\rightarrow X$ over a compact metric space $X$ and an Iwasawa decomposition $G=KAN^{\pm}$. Then $G$ acts continuously from the right on $Q$, this action preserves the fibers, and is free and transitive on each fiber. In particular, this implies that each fiber is homeomorphic to $G$. An automorphism of $Q$ is a homeomorphism $\phi:Q\rightarrow Q$ which maps fibers to fibers and respects the right action of $G$ in the sense that $\phi(q \cdot g) = \phi(q) \cdot g$. For each set $\Theta\subset\Sigma$ of simple roots there exists a flag bundle $\E_{\Theta} = Q \tm_G \F_{\Theta}$ with typical fiber $\F_{\Theta}$ given by $(Q\tm \F_{\Theta})/\!\sim$, where $(q_1,b_1)\sim (q_2,b_2)$ iff there exists $g\in G$ with $q_1 = q_2 \cdot g$ and $b_1 = g^{-1} \cdot b_2$. The maximal flag bundle $Q\tm_G \F$ is denoted by $\E$.%

Now let $\phi_t:Q\rightarrow Q$ be a flow of automorphisms whose base flow on $X$ is chain transitive. This flow induces a flow on each of the associated flag bundles $\E_{\Theta}$, which we also denote by $\phi_t$. In the following subsection, we recall different characterizations of the chain recurrent components of these flows.%

\subsection{Morse Sets}%

Each of the flows $\phi_t:\E_{\Theta}\rightarrow\E_{\Theta}$ has finitely many chain recurrent components and thus admits a finest Morse decomposition. The Morse sets on $\E_{\Theta}$ can be described as follows (cf.~\cite[Thm.~9.11]{BSM} or \cite[Thm.~5.2]{SMS}).%

\begin{theorem}\label{thm_morsesets}
The following assertions hold:%
\begin{enumerate}
\item[(i)] There exist $H_{\phi}\in\cl\fa^+$ and a continuous $\phi$-invariant map%
\begin{equation*}
  h_{\phi}:Q \rightarrow \Ad(G)H_{\phi},\quad h_{\phi}(\phi_t(q)) \equiv h_{\phi}(q),%
\end{equation*}
into the adjoint orbit of $H_{\phi}$, which is equivariant, i.e., $h_{\phi}(q\cdot g) = \Ad(g^{-1})h_{\phi}(q)$, $q\in Q$, $g\in G$. The induced flow on $\E_{\Theta}$ admits a finest Morse decomposition whose elements are given fiberwise by%
\begin{equation*}
  \MC_{\Theta}(w)_{\pi(q)} = q \cdot \fix_{\Theta}(h_{\phi}(q),w),\quad w\in\WC.%
\end{equation*}
The set $\Theta(\phi) := \Theta(H_{\phi}) = \{\alpha \in \Sigma : \alpha(H_{\phi}) = 0\}$ is called the \emph{flag type} of the flow $\phi$. In particular, $H_{\phi} \in \fa_{\Theta(\phi)}$.%
\item[(ii)] The induced flow on $\E_{\Theta}$ admits only one attractor component $\MC^+_{\Theta} = \MC_{\Theta}(1)$ and one repeller component $\MC^-_{\Theta} = \MC_{\Theta}(w_0)$. Moreover, the attractor component $\MC^+_{\Theta(\phi)}$ is given as the image of a continuous section $\sigma_{\phi}:X\rightarrow\E_{\Theta(\phi)}$, i.e.,%
\begin{equation*}
  \left(\MC^+_{\Theta(\phi)}\right)_x = \sigma_{\phi}(x)\quad \mbox{for all } x\in X.%
\end{equation*}
\end{enumerate}
\end{theorem}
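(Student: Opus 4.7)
I would follow the strategy of Braga Barros--San Martin: build a canonical $\phi$-invariant reduction of the $G$-principal bundle $Q$ from the attractor on the maximal flag bundle $\E$, read off the element $H_{\phi}$ from that reduction, and then describe every Morse set on $\E_{\Theta}$ via the classical fixed-point components $\fix_{\Theta}(H_{\phi},w)$.

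Step 1 (finest Morse decomposition and reduction of $Q$). Each fiber $\F_{\Theta}$ is compact and the induced fiber flow is conjugate to a gradient flow of a split element, so only finitely many chain recurrent components can exist on $\E_{\Theta}$ and a finest Morse decomposition exists. Focusing on $\E = \E_{\emptyset}$, in each fiber the forward attractor of $\phi_t$ is, up to trivialization, an attractor fixed-point set of $\exp(tH)$ for some $H \in \cl\fa^+$. Combining this forward reduction with the one produced by the time-reversed flow (whose fiberwise attractor corresponds to $w_0$) pins down a $\phi$-invariant continuous reduction of $Q$ to a subbundle $Q_0$ whose structural group is contained in $K_H \exp(\fa)$. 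Chain transitivity of the base flow on $X$ then forces the $K$-conjugacy class of $H$ to be constant along $X$, so a common representative $H_{\phi} \in \cl\fa^+$ exists; set $\Theta(\phi) := \Theta(H_{\phi})$.

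Step 2 (construction of $h_{\phi}$ and the Morse sets). For $q \in Q_0$ put $h_{\phi}(q) := H_{\phi}$ and extend by $h_{\phi}(q\cdot g) := \Ad(g^{-1}) H_{\phi}$. This is well-defined because the centralizer of $H_{\phi}$ in $G$ contains the structural group of $Q_0$; continuity is inherited from the continuity of the reduction, $\phi$-invariance from its $\phi$-invariance, and the image is the single adjoint orbit $\Ad(G)H_{\phi}$. In the trivialization at $q$, the flow on $\E_{\Theta}|_{\pi(q)} \cong \F_{\Theta}$ is conjugate, up to a cocycle valued in the centralizer of $h_{\phi}(q)$ (which leaves every $\fix_{\Theta}$-set invariant), to $\exp(t h_{\phi}(q))$. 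Its fixed-point components are exactly $\fix_{\Theta}(h_{\phi}(q),w)$ with $w$ ranging over $\WC_{H_{\phi}}\backslash\WC/\WC_{\Theta}$, and equivariance of $h_{\phi}$ makes $q \cdot \fix_{\Theta}(h_{\phi}(q),w)$ independent of the fiber representative. The fiberwise gradient structure then guarantees that these sets constitute the finest Morse decomposition.

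Step 3 (part (ii) and the section). Since $b_{\Theta}$ is the unique attractor fixed point and $w_0 b_{\Theta}$ the unique repeller fixed point of $\exp(tH_{\phi})$ on $\F_{\Theta}$ (the principal involution exchanges $\fa^+$ and $-\fa^+$), fiberwise uniqueness propagates to global uniqueness, giving $\MC^+_{\Theta}=\MC_{\Theta}(1)$ and $\MC^-_{\Theta}=\MC_{\Theta}(w_0)$. For $\Theta = \Theta(\phi)$ one has $K_{H_{\phi}} \subset P_{\Theta(\phi)}$, so $\fix_{\Theta(\phi)}(H_{\phi},1) = \{b_{\Theta(\phi)}\}$; thus $\MC^+_{\Theta(\phi)}$ meets each fiber in a single point and the pointwise attractor defines a continuous section $\sigma_{\phi}:X \to \E_{\Theta(\phi)}$, continuity coming from continuity of the attractor as a set-valued map combined with the singleton property.

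The main obstacle is Step 1: producing the continuous $\phi$-invariant reduction $Q_0$ from the attractor alone. Forward dynamics by itself only yields a parabolic reduction; it is the combination with the backward attractor, together with chain transitivity of the base, that selects an $\fa$-direction, pins down $H_{\phi}$ up to $K$-conjugacy, and globalizes it into a single adjoint orbit so that $h_{\phi}$ is defined on all of $Q$.
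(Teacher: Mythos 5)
The paper does not prove this theorem; it is cited from Braga Barros--San Martin \cite[Thm.~9.11]{BSM} and San Martin--Seco \cite[Thm.~5.2]{SMS}, so there is no in-paper proof to compare against. Your plan is in the spirit of those references, but Step~1 contains a genuine error, and it is precisely the step you flag as the ``main obstacle.''

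The flow $\phi_t$ on $\E_\Theta$ covers the base flow on $X$, so it does not preserve any single fiber. Consequently there is no ``induced fiber flow'' on $\F_\Theta$ that could be ``conjugate to a gradient flow of a split element,'' and no fiberwise forward attractor. The assertion that the chain recurrent set of $\phi_t$ on $\E_\Theta$ has only finitely many components, and that each of these meets every fiber in a translate of some $\fix_\Theta(H,w)$ with a fiber-independent abstract type $H$, is exactly the deep content of \cite{BSM}; it is proved there via the machinery of shadowing semigroups acting on flag manifolds, not by inspecting a fiberwise gradient flow. The finiteness of the Morse decomposition is a consequence of the parametrization by $\WC_H\backslash\WC/\WC_\Theta$ that this theory produces; your outline presupposes the conclusion. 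Likewise, the globalization of $H_\phi$ over $X$ is not merely a matter of observing that ``chain transitivity forces the $K$-conjugacy class of $H$ to be constant'': one has to know first that a well-defined fiberwise type exists at all, which is again the shadowing-semigroup theorem.

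There is also a smaller slip. Intersecting the parabolic reduction coming from the attractor on $\E$ with the one coming from the repeller yields a $\phi$-invariant subbundle with structural group $P_{\Theta(\phi)}\cap P^-_{\Theta(\phi)} = Z_{H_\phi} = K_{H_\phi}\,A\,N^{\pm}(\Theta(\phi))$, which is not contained in $K_{H_\phi}\exp(\fa)$ unless $\Theta(\phi)=\emptyset$. For the equivariant extension $h_\phi(q\cdot g):=\Ad(g^{-1})H_\phi$ to be well defined it is enough that the structural group lie in $Z_{H_\phi}$, which it does, so the construction still works; but you should correctly identify the group. (The further $K_{H_\phi}$-reduction $R_\phi$ the paper uses is a separate, subsequent step.)

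Steps~2 and~3 are sound once a correct Step~1 is in place: defining $h_\phi$ constant on the $Z_{H_\phi}$-reduction, extending by $G$-equivariance, reading the Morse sets as $q\cdot\fix_\Theta(h_\phi(q),w)$, identifying the unique attractor and repeller with $w=1$ and $w=w_0$ from the gradient structure of $\exp(tH_\phi)$ on $\F_\Theta$, and observing that $K_{H_\phi}\subset P_{\Theta(\phi)}$ makes $\fix_{\Theta(\phi)}(H_\phi,1)$ a singleton so $\MC^+_{\Theta(\phi)}$ is the graph of a continuous section --- all of this matches \cite{BSM,SMS}. But the argument does not close as written, because the existence of the finest Morse decomposition and of the fiber-independent type are exactly what the cited theorems supply.
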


For the Morse sets on the maximal flag bundle $\E$ we also write $\MC(w)$. Alternatively, the Morse sets can be described in terms of a block reduction of $\phi$ as follows (cf.~\cite[Prop.~5.4]{SMS}).%

\begin{proposition}
The set $Q_{\phi} = h_{\phi}^{-1}(H_{\phi})$ is a $\phi$-invariant subbundle of $Q$ with structural group $Z_{\phi} := Z_{H_{\phi}}$, called a block reduction of $\phi$. There exists a $K_{\phi}$-reduction $R_{\phi} \subset Q_{\phi}$ ($K_{\phi} = Z_{\phi} \cap K$), i.e., a subbundle with structural group $K_{\phi}$. Then%
\begin{equation}\label{eq_morsecomponents_seconddescr}
  \MC_{\Theta}(w) = \left\{ q \cdot wb_{\Theta}\ :\ q\in Q_{\phi} \right\} = \left\{ r\cdot wb_{\Theta}\ :\ r \in R_{\phi} \right\}.%
\end{equation}
\end{proposition}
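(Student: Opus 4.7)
The plan is to verify the three assertions in order: the principal subbundle structure of $Q_\phi$, the existence of a $K_\phi$-reduction $R_\phi$, and the two fiberwise descriptions of the Morse components.

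First I would establish that $Q_\phi = h_\phi^{-1}(H_\phi)$ is a $Z_\phi$-principal subbundle of $Q$ using only the $\phi$-invariance and equivariance of $h_\phi$ from Theorem \ref{thm_morsesets}(i). The $\phi$-invariance of $Q_\phi$ is immediate from $h_\phi \circ \phi_t \equiv h_\phi$. Equivariance gives $h_\phi(q \cdot g) = \Ad(g^{-1}) h_\phi(q)$, so for $q \in Q_\phi$ one has $q \cdot g \in Q_\phi$ iff $\Ad(g^{-1}) H_\phi = H_\phi$, i.e. iff $g \in Z_\phi$; hence the right $Z_\phi$-action is free on $Q_\phi$. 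Since the adjoint orbit of $H_\phi$ is $G/Z_\phi$, for any $q_x$ in a fiber $Q_x$ one can pick $g \in G$ with $\Ad(g^{-1}) h_\phi(q_x) = H_\phi$, so $q_x \cdot g \in Q_\phi$; a short calculation using equivariance shows that $Q_\phi \cap Q_x$ equals the single $Z_\phi$-orbit $(q_x \cdot g) \cdot Z_\phi$. Local triviality of $Q_\phi \to X$ then follows by combining local trivializations of $Q$ with local sections of the principal $Z_\phi$-bundle $G \to G/Z_\phi$, which exist since $Z_\phi$ is closed in $G$.

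For the $K_\phi$-reduction, I would invoke the reductive structure of $Z_\phi$ (visible in the Iwasawa decomposition \eqref{eq_ztheta_iwasawa}): $K_\phi = Z_\phi \cap K$ is a maximal compact subgroup and $Z_\phi / K_\phi$ is diffeomorphic to a Euclidean space, hence contractible. Over the paracompact base $X$, the associated bundle $Q_\phi \times_{Z_\phi} (Z_\phi / K_\phi) \to X$ therefore admits a continuous global section; pulling this section back to $Q_\phi$ yields a $K_\phi$-principal subbundle $R_\phi \subset Q_\phi$.

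Finally, both identities in \eqref{eq_morsecomponents_seconddescr} follow fiberwise from Theorem \ref{thm_morsesets}(i). For $q \in Q_\phi$ we have $h_\phi(q) = H_\phi$, and using $\fix_\Theta(H_\phi,w) = Z_\phi \cdot w b_\Theta = K_\phi \cdot w b_\Theta$ we obtain
\begin{equation*}
  \MC_\Theta(w)_{\pi(q)} = q \cdot \fix_\Theta(H_\phi,w) = (q \cdot Z_\phi) \cdot w b_\Theta;
\end{equation*}
since $q \cdot Z_\phi$ is exactly the fiber of $Q_\phi$ over $\pi(q)$, taking the union over $x \in X$ gives the first equality in \eqref{eq_morsecomponents_seconddescr}. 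Replacing $Z_\phi$ with $K_\phi$ and $q$ with $r \in R_\phi$, and using that $r \cdot K_\phi$ is the fiber of $R_\phi$ over $\pi(r)$, yields the second equality. The main technical point is the local triviality of $Q_\phi$; once this is in place and the contractibility of $Z_\phi/K_\phi$ is invoked, the two Morse-set formulas reduce to algebraic manipulation with the $\fix_\Theta$ description.
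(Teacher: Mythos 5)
The paper does not actually prove this proposition: it is imported verbatim with the citation ``(cf.~\cite[Prop.~5.4]{SMS})'', and no proof environment follows. So you are supplying a self-contained argument for a statement the authors treat as an external input, and the comparison is necessarily with the San Martin--Seco source rather than with anything in this paper.

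Your argument is essentially correct and follows the standard route. The bundle structure of $Q_\phi$ falls out directly from the equivariance $h_\phi(q\cdot g)=\Ad(g^{-1})h_\phi(q)$: the fiberwise stabilizer of $H_\phi$ is $Z_\phi$, each fiber $Q_\phi\cap Q_x$ is a single $Z_\phi$-orbit because $\Ad(G)H_\phi\cong G/Z_\phi$, and local triviality follows by composing a local trivialization of $Q$ with a local section of $G\to G/Z_\phi$ (the continuity of $h_\phi$ is what makes this composition continuous, and that is worth saying explicitly). For the $K_\phi$-reduction, the fiber $Z_\phi/K_\phi$ is solid --- the Iwasawa factorization \eqref{eq_ztheta_iwasawa} gives $Z_\phi/K_\phi\cong AN^\pm(\Theta(\phi))$, diffeomorphic to a Euclidean space --- so Steenrod's section theorem for solid fibers over a paracompact (here compact metric) base produces a global section of $Q_\phi\times_{Z_\phi}(Z_\phi/K_\phi)\to X$, which is exactly a $K_\phi$-reduction. (Invoking ``contractibility'' alone is slightly loose for a base that is not a CW complex; the solid-fiber formulation is the clean one for a compact metric $X$.) The Morse-set identities then do reduce to a one-line fiberwise computation using $\MC_\Theta(w)_{\pi(q)}=q\cdot\fix_\Theta(h_\phi(q),w)$ with $h_\phi(q)=H_\phi$ on $Q_\phi$, the identity $\fix_\Theta(H_\phi,w)=Z_\phi\cdot wb_\Theta=K_\phi\cdot wb_\Theta$, and the associativity $q\cdot(g\cdot b)=(q\cdot g)\cdot b$ of the associated-bundle action. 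Your proof is a valid substitute for the cited one, and it has the virtue of making explicit the only two nontrivial topological inputs: local triviality of $Q_\phi$ (from continuity of $h_\phi$) and the existence of the compact reduction (from solidity of $Z_\phi/K_\phi$).
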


\subsection{Iwasawa Decomposition and $\fa$-Cocycle}\label{subsec_acocycle}%

Consider a $G$-principal bundle $\pi:Q\rightarrow X$ as above and let $G = KAN^+$ be an Iwasawa decomposition of $G$. Then there exists a $K$-reduction $R\subset Q$, i.e., a subbundle with structural group $K$. An Iwasawa decomposition of $Q$ is then given by $Q = R \cdot AN^+$, and we can write each $q\in Q$ in a unique way as $q = r \cdot hn$ with $r\in R$, $h\in A$ and $n\in N^+$. We denote by%
\begin{equation*}
  \rmR:Q \rightarrow R,\qquad \rmA:Q \rightarrow A,%
\end{equation*}
the corresponding (continuous) projections from $Q$ to $R$ and from $Q$ to $A$, respectively. The exponential map of $G$ maps $\fa$ bijectively onto $A$. Writing $\log$ for the inverse of $\exp|_{\fa}$, we define%
\begin{equation*}
  \rma(q) := \log \rmA(q),\quad \rma:Q\rightarrow \fa.%
\end{equation*}
Let $\phi_t:Q\rightarrow Q$ be again a flow of automorphisms. Then also%
\begin{equation*}
  \phi_t^R:R \rightarrow R,\quad \phi_t^R(r) = \rmR(\phi_t(r)),%
\end{equation*}
is a flow, and a continuous additive cocycle over $\phi_t^R$ is given by%
\begin{equation*}
  \rma^{\phi}:\R\tm R \rightarrow \fa,\quad \rma^{\phi}(t,r) := \rma(\phi_t(r)).%
\end{equation*}
In the following, by abuse of notation, we only write $\rma$ for $\rma^{\phi}$. Then $\rma$ induces a cocycle over the flow on the maximal flag bundle $\E = Q \tm_G \F = R \tm_K \F$ by%
\begin{equation*}
  \rma(t,\xi) := \rma(t,r),\quad \xi = r \cdot b_0.%
\end{equation*}
We call this cocycle the $\fa$-cocycle over $\phi_t$. For the time-reversed flow we have the additive cocycles%
\begin{equation*}
  \rma^*(t,r) := \log\rmA(\phi_{-t}(r)),\qquad \rma^*(t,r\cdot b_0) := \rma^*(t,r),%
\end{equation*}
and it holds that (cf.~\cite[Prop.~14]{ASM})%
\begin{equation}\label{eq_timeinv_acoc}
  \rma^*(t,\xi) = -\rma(t,\phi_{-t}(\xi)).%
\end{equation}

The following lemma can be found in \cite[Lem.~6.1]{ASM}.%

\begin{lemma}\label{lem_alvessanmartin}
Let $\Theta\subset\Sigma$. If $\beta:\fa\rightarrow V$ is a linear map into an $\R$-vector space $V$, which annihilates on $\fa(\Theta)$, then the cocycle $\rma_{\beta} := \beta \circ \rma$ satisfies%
\begin{equation*}
  \rma_{\beta}(t,r) = \rma_{\beta}(t,r\cdot k),\quad \forall k\in K_{\Theta}.%
\end{equation*}
\end{lemma}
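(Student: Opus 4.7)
By linearity of $\beta$, the identity $\rma_\beta(t,r) = \rma_\beta(t,r\cdot k)$ is equivalent to
\begin{equation*}
  \rma(t,r\cdot k) - \rma(t,r) \in \ker\beta,
\end{equation*}
and since $\fa(\Theta) \subseteq \ker\beta$ by hypothesis, it suffices to show the difference lies in $\fa(\Theta)$. Using that $\phi_t$ commutes with the right $G$-action, I would write the Iwasawa decomposition $\phi_t(r) = r_t \cdot h_t n_t$ with $r_t \in R$, $h_t \in A$, $n_t \in N^+$, so that $\rma(t,r) = \log h_t$ and $\phi_t(r\cdot k) = r_t \cdot h_t n_t k$. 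The whole task then reduces to proving that the $A$-component of $h_t n_t k$ in the $G$-Iwasawa decomposition differs from $h_t$ only by an element of $A(\Theta)$.

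\textbf{Pushing $k$ through the Iwasawa factors.} The decomposition $\fn^+ = \fn^+(\Theta) \oplus \fn^+_\Theta$ gives $N^+ = N^+(\Theta) \cdot N^+_\Theta$, so write $n_t = n(\Theta)\cdot n_\Theta$. The crucial observation is that $K_\Theta \subset Z_\Theta$ normalizes $N^+_\Theta$: indeed, $\Ad(Z_\Theta)$ commutes with $\ad(\fa_\Theta)$ and hence preserves each $\ad(\fa_\Theta)$-weight space, and $\fn^+_\Theta$ is a sum of nonzero-weight spaces. Therefore $n_\Theta k = k\cdot n_\Theta'$ with $n_\Theta' \in N^+_\Theta$, and
\begin{equation*}
  h_t n_t k \;=\; \bigl[h_t\, n(\Theta)\, k\bigr] \cdot n_\Theta',
\end{equation*}
where the bracketed factor lies in $Z_\Theta$. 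Applying the Iwasawa decomposition \eqref{eq_ztheta_iwasawa} of $Z_\Theta$, write $h_t n(\Theta) k = k_* a_* n_*$ with $k_* \in K_\Theta$, $a_* \in A$, $n_* \in N^+(\Theta)$. Substituting and using $N^+(\Theta)\cdot N^+_\Theta = N^+$, the product $k_* a_* (n_* n_\Theta')$ is the $G$-Iwasawa of $h_t n_t k$. Hence the question becomes: is $a_* h_t^{-1} \in A(\Theta)$?

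\textbf{Centrality of $A_\Theta$ in $Z_\Theta$.} Since $Z_\Theta$ is by definition the centralizer of $\fa_\Theta$, the subgroup $A_\Theta = \exp\fa_\Theta$ lies in the center of $Z_\Theta$. Decompose $h_t = h_{t,\Theta}\cdot h_t(\Theta)$ along $\fa = \fa_\Theta \oplus \fa(\Theta)$; then
\begin{equation*}
  h_t n(\Theta) k \;=\; h_{t,\Theta}\cdot \bigl[h_t(\Theta)\, n(\Theta)\, k\bigr],
\end{equation*}
and the central factor $h_{t,\Theta}$ can be pulled past any $K_\Theta$-element in a $Z_\Theta$-Iwasawa. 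Consequently, if the $Z_\Theta$-Iwasawa $A$-part of $h_t(\Theta) n(\Theta) k$ lies in $A(\Theta)$, then $a_* = h_{t,\Theta}\cdot (\text{something in }A(\Theta))$, giving $a_* h_t^{-1} \in A(\Theta)$ as required. To see that the $A$-part of $h_t(\Theta) n(\Theta) k$ is in $A(\Theta)$, I would decompose $K_\Theta$ along $M$ (centralizer of $\fa$ in $K$) and $K(\Theta) = K \cap G(\Theta)$: using that $M$ normalizes both $A$ and $N^+(\Theta)$ and commutes with $A$, the $M$-part passes through harmlessly, and the remaining element lies in $G(\Theta) = K(\Theta)A(\Theta)N^+(\Theta)$, whose Iwasawa has $A$-part automatically in $A(\Theta)$; uniqueness of the $Z_\Theta$-Iwasawa then identifies this $G(\Theta)$-Iwasawa $A$-part with $a_*$.

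\textbf{Main obstacle.} The whole argument hinges on organizing the Iwasawa decompositions of the three nested groups $G \supset Z_\Theta \supset G(\Theta)$ so that they are mutually compatible (in particular, that a Iwasawa decomposition in $G(\Theta)$ or $Z_\Theta$ of an element automatically gives its $G$-Iwasawa decomposition). Once one fixes the bookkeeping — namely, the matchings $K(\Theta)\subset K_\Theta \subset K$, $A(\Theta)\subset A$, and $N^+(\Theta)\subset N^+$, together with $K_\Theta = M\cdot K(\Theta)$ — each step is routine, but getting the normalization/centralization statements for $K_\Theta$ acting on $N^+_\Theta$ and for $A_\Theta$ in $Z_\Theta$ precisely right is where the care is needed.
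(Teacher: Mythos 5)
The paper does not prove this lemma itself; it cites it from Alves and San Martin \cite{ASM}, Lemma 6.1, so there is no in-paper proof to compare against. You have supplied a self-contained argument where the paper relies on an external reference, and your argument is correct. The reduction to showing that the $A$-component of $h_t n_t k$ in the $G = KAN^+$ decomposition differs from $h_t$ by an element of $A(\Theta)$ is exactly right, since $R$ is a $K$-reduction (so the $K$-part is absorbed into $R$) and the $N^+$-part does not affect $\rma$. Your central factorization $h_t n_t k = [h_t\, n(\Theta)\, k]\, n_\Theta'$ is valid: $K_{\Theta} \subset Z_{\Theta}$ normalizes $N^+_{\Theta}$ because $\Ad(Z_{\Theta})$ commutes with $\ad(H_{\Theta})$ for a characteristic element $H_{\Theta}\in\fa_{\Theta}$, and $\fn^+_{\Theta}$ is precisely the sum of the strictly positive $\ad(H_{\Theta})$-eigenspaces; moreover $A$, $N^+(\Theta)$, $K_{\Theta}$ all centralize $\fa_{\Theta}$, so the bracketed factor lies in $Z_{\Theta}$.

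The bookkeeping concern you raise at the end is resolved cleanly by the containments $K(\Theta)\subset K_{\Theta}\subset K$, $A(\Theta)\subset A$, $N^+(\Theta)\subset N^+$ together with the uniqueness of the $G$-Iwasawa decomposition: any $K_{\Theta} A N^+(\Theta)$- or $K(\Theta)A(\Theta)N^+(\Theta)$-factorization of an element is automatically its unique $KAN^+$-factorization, so the three decompositions are nested compatibly and your uniqueness arguments go through. Centrality of $A_{\Theta}=\exp\fa_{\Theta}$ in $Z_{\Theta}$ is immediate from $Z_{\Theta}$ being the centralizer of $\fa_{\Theta}$. The identity $K_{\Theta}=MK(\Theta)$ is a standard structural fact about parabolic subgroups (see e.g.\ Warner \cite{War}); combined with $M$ centralizing $A$ and normalizing each root space (hence $N^+(\Theta)$), your passage into $G(\Theta)$ works, and the $G(\Theta)$-Iwasawa decomposition forces the $A$-part into $A(\Theta)$. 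Your sketch is thus a complete proof; when writing it up you should simply make explicit that all the normalization/centralization claims follow from $\Ad(Z_{\Theta})$ commuting with $\ad(\fa_{\Theta})$ and $\Ad(M)$ fixing $\fa$ pointwise.
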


With the aid of this lemma one can show that over the Morse sets there exists a factorization of the $\fa$-cocycle. Let $\pi_{\Theta}:\E\rightarrow\E_{\Theta}$ be the canonical projection given by $r\cdot b_0 \mapsto r\cdot b_{\Theta}$. Since by \eqref{eq_morsecomponents_seconddescr} the Morse sets of the induced flow are given by $\MC_{\Theta}(w) = R_{\phi} \cdot wb_{\Theta}$, it holds that $\pi_{\Theta}(\MC(w)) = \MC_{\Theta}(w)$.%

Now consider the subsets of the roots given by%
\begin{equation*}
  \Pi^{\pm}_{\phi,\Theta,w} := \left\{\alpha\in\Pi^{\pm}\backslash\langle\Theta(\phi)\rangle\ :\ w^{-1}\alpha\in\Pi^-\backslash\langle\Theta\rangle\right\} = \Pi^{\pm}\backslash\langle\Theta(\phi)\rangle \cap w(\Pi^-\backslash\langle\Theta\rangle),%
\end{equation*}
and define%
\begin{equation*}
  \sigma^{\pm}_{\Theta,w} := \sum_{\alpha\in\Pi^{\pm}_{\phi,\Theta,w}} n_{\alpha}\alpha,\qquad n_{\alpha} := \dim\fg_{\alpha}.%
\end{equation*}

\begin{lemma}\label{lem_sigmapmann}
The functionals $\sigma^{\pm}_{\Theta,w}$ annihilate on $\fa(\Theta(\phi) \cap w\langle\Theta\rangle)$.%
\end{lemma}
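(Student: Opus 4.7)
Since $\Theta(\phi)\cap w\langle\Theta\rangle\subset\Sigma$, the subspace $\fa(\Theta(\phi)\cap w\langle\Theta\rangle)$ is by definition spanned by the coroots $H_{\gamma}$, $\gamma\in\Theta(\phi)\cap w\langle\Theta\rangle$, so it is enough to show $\sigma^{\pm}_{\Theta,w}(H_{\gamma})=0$ for each such $\gamma$. Combining the defining relation $\alpha(H_{\gamma})=B_{\theta}(H_{\alpha},H_{\gamma})$ with the standard formula $s_{\gamma}\mu=\mu-\frac{2\mu(H_{\gamma})}{B_{\theta}(H_{\gamma},H_{\gamma})}\gamma$ for the Weyl reflection $s_{\gamma}\in\WC$ acting on $\fa^*$, one obtains that $\mu(H_{\gamma})=0$ if and only if $s_{\gamma}\mu=\mu$. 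The lemma will therefore follow from the claim that $s_{\gamma}\sigma^{\pm}_{\Theta,w}=\sigma^{\pm}_{\Theta,w}$ for every $\gamma\in\Theta(\phi)\cap w\langle\Theta\rangle$.

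Weyl group elements have representatives in $M^*\subset K$ whose $\Ad$-action intertwines $\fg_{\alpha}$ and $\fg_{s_{\gamma}\alpha}$, so the multiplicities are Weyl-invariant: $n_{s_{\gamma}\alpha}=n_{\alpha}$. Substituting $\alpha'=s_{\gamma}\alpha$ in $s_{\gamma}\sigma^{\pm}_{\Theta,w}=\sum_{\alpha'}n_{s_{\gamma}\alpha'}\alpha'$, the desired identity reduces to the purely combinatorial statement that $s_{\gamma}$ permutes $\Pi^{\pm}_{\phi,\Theta,w}=(\Pi^{\pm}\setminus\langle\Theta(\phi)\rangle)\cap w(\Pi^-\setminus\langle\Theta\rangle)$.

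I would verify the two required invariances separately, using the two different properties of $\gamma$. First, $\gamma\in\Theta(\phi)\subset\Sigma$ makes $s_{\gamma}$ a simple reflection for $\Theta(\phi)$; expanding any $\alpha\in\Pi$ as an integer combination of $\Sigma$, the reflection $s_{\gamma}\alpha$ differs from $\alpha$ only in the coefficient of $\gamma$, so any nonzero coefficient on a simple root outside $\Theta(\phi)$ survives with unchanged sign. This shows that $s_{\gamma}$ stabilizes $\Pi^{\pm}\setminus\langle\Theta(\phi)\rangle$. Second, $\gamma\in w\langle\Theta\rangle$ gives $w^{-1}\gamma\in\langle\Theta\rangle$, so $s_{w^{-1}\gamma}\in\WC(\Theta)\cong\WC_{\Theta}$; since $\WC_{\Theta}$ is generated by the simple reflections of $\Theta$, the same coefficient argument with $\Theta$ in place of $\Theta(\phi)$ shows that $\WC_{\Theta}$ stabilizes $\Pi^{\pm}\setminus\langle\Theta\rangle$. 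The conjugation identity $s_{\gamma}=w\,s_{w^{-1}\gamma}\,w^{-1}$ then yields
\begin{equation*}
  s_{\gamma}\bigl(w(\Pi^-\setminus\langle\Theta\rangle)\bigr)=w\,s_{w^{-1}\gamma}(\Pi^-\setminus\langle\Theta\rangle)=w(\Pi^-\setminus\langle\Theta\rangle).
\end{equation*}
Intersecting the two $s_{\gamma}$-invariant sets gives $s_{\gamma}\Pi^{\pm}_{\phi,\Theta,w}=\Pi^{\pm}_{\phi,\Theta,w}$, which closes the argument.

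The only nontrivial ingredient is the auxiliary root-system observation that a simple reflection $s_{\delta}$ with $\delta\in\Theta'\subset\Sigma$ preserves $\Pi^{\pm}\setminus\langle\Theta'\rangle$; this is a routine generalization of the classical fact that $s_{\delta}$ permutes $\Pi^+\setminus\{\delta\}$, but one must be careful to track coefficients on simple roots outside $\Theta'$ rather than just on $\delta$ itself. I would isolate it as a short preliminary observation and then invoke it for both $\Theta'=\Theta(\phi)$ and $\Theta'=\Theta$.
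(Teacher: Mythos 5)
Your proof is correct, and the overall structure matches the paper's: both reduce the claim $\sigma^{\pm}_{\Theta,w}(H_{\gamma})=0$ (for $\gamma\in\Theta(\phi)\cap w\langle\Theta\rangle$) to reflection-invariance $s_{\gamma}\sigma^{\pm}_{\Theta,w}=\sigma^{\pm}_{\Theta,w}$, then to $s_{\gamma}(\Pi^{\pm}_{\phi,\Theta,w})=\Pi^{\pm}_{\phi,\Theta,w}$ via the Weyl-invariance of multiplicities. Where you diverge is in the final verification of that permutation claim. The paper takes the intersection $\Pi^{\pm}_{\phi,\Theta,w}=(\Pi^{\pm}\setminus\langle\Theta(\phi)\rangle)\cap w(\Pi^-\setminus\langle\Theta\rangle)$ and, for $\alpha$ in this set, shows directly that $s_{\gamma}\alpha$ stays inside by evaluating $s_{\gamma}\alpha$ against the characteristic element $H_{\phi}$ and $w^{-1}s_{\gamma}\alpha$ against a characteristic element $H$ with $\Theta=\Theta(H)$; the two inequalities $>0$ and $<0$ certify membership in the respective factors. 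You instead show each factor is separately $s_{\gamma}$-stable: the first because $\gamma\in\Theta(\phi)\subset\Sigma$ makes $s_{\gamma}$ a simple reflection and the coefficient-tracking argument applies, the second by conjugating $s_{\gamma}$ into $\WC_{\Theta}$ (using $\gamma\in w\langle\Theta\rangle$) and applying the same coefficient argument to $\Theta$. Both routes are standard and correct; yours has the virtue of cleanly separating the two roles $\gamma$ plays (a simple root of $\Theta(\phi)$, and a $w$-conjugate of a root in $\langle\Theta\rangle$), while the paper's is slightly more compact since it handles both conditions with one displayed computation. One small point of caution for a write-up: when you invoke ``the same coefficient argument'' for $\WC_{\Theta}$, what you are actually using is that each simple generator $s_{\delta}$, $\delta\in\Theta$, stabilizes $\Pi^{\pm}\setminus\langle\Theta\rangle$, and hence so does the generated group; it is worth stating that inheritance explicitly rather than saying the coefficient argument ``shows $\WC_{\Theta}$ stabilizes'' directly, since the coefficient argument as stated applies only to a single simple reflection.
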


\begin{proof}
We only give the proof for $\sigma^+_{\Theta,w}$, since for $\sigma^-_{\Theta,w}$ it works analogously. Denote by $(\cdot,\cdot)$ the inner product on $\fa^*$ dual to $B_{\theta}$, i.e.,%
\begin{equation*}
  (\alpha,\beta) = B_{\theta}(H_{\alpha},H_{\beta}).%
\end{equation*}
Consider $\beta \in \Theta(\phi) \cap w\langle\Theta\rangle$ and the associated $\beta$-reflection (the orthogonal reflection at $\beta^{\bot}$), i.e., the map%
\begin{equation}\label{eq_betarefl}
  r_{\beta}(\alpha) = \alpha - 2\frac{(\alpha,\beta)}{(\beta,\beta)}\beta.%
\end{equation}
Then $r_{\beta}(\Pi^+_{\phi,\Theta,w}) = \Pi^+_{\phi,\Theta,w}$. Indeed, let $H\in\cl\fa^+$ such that $\Theta = \Theta(H)$. Then for every $\alpha\in\Pi^+_{\phi,\Theta,w}$ it holds that%
\begin{equation*}
  r_{\beta}(\alpha)(H_{\phi}) = \alpha(H_{\phi}) - 2\frac{(\alpha,\beta)}{(\beta,\beta)}\beta(H_{\phi}) > 0,%
\end{equation*}
since $\alpha(H_{\phi})>0$ and $\beta(H_{\phi}) = 0$. Moreover,%
\begin{equation*}
  w^{-1}r_{\beta}(\alpha)(H) = w^{-1}\alpha(H) - 2\frac{(\alpha,\beta)}{(\beta,\beta)}w^{-1}\beta(H) < 0.%
\end{equation*}
Indeed, since $w^{-1}\alpha \in \Pi^-$ and $w^{-1}\alpha \notin \langle\Theta(H)\rangle$, we have  $w^{-1}\alpha(H) < 0$, and since $w^{-1}\beta \in \langle\Theta\rangle = \langle\Theta(H)\rangle$, we have $w^{-1}\beta(H)=0$. Hence, we obtain $r_{\beta}(\Pi^+_{\phi,\Theta,w}) \subset \Pi^+_{\phi,\Theta,w}$ and since $r_{\beta}$ is injective, equality holds. An elementary computation shows that for every $w\in\WC$ it holds that $w\fg_{\alpha} = \fg_{w\alpha}$ and consequently $n_{\alpha} = n_{w\alpha}$, which yields%
\begin{equation*}
  r_{\beta}(\sigma^+_{\Theta,w}) = \sum_{\alpha\in\Pi^+_{\phi,\Theta,w}}n_{r_{\beta}(\alpha)}r_{\beta}(\alpha) = \sum_{\gamma\in r_{\beta}(\Pi^+_{\phi,\Theta,w})}n_{\gamma}\gamma = \sigma^+_{\Theta,w}.%
\end{equation*}
Looking at the definition \eqref{eq_betarefl} of $r_{\beta}$, one sees that this is equivalent to $\sigma^+_{\Theta,w}(H_{\beta}) = (\sigma^+_{\Theta,w},\beta) = 0$. Since $\beta$ was chosen arbitrarily in $\Theta(\phi)\cap w\langle\Theta\rangle$, the assertion follows.%
\end{proof}

In the rest of this section, we assume that the $K$-reduction $R$ is chosen such that $R_{\phi} \subset R$ (cf.~\cite[Sec.~4.3]{SMS}), and we use the description of the Morse sets on $\E_{\Theta}$ given by \eqref{eq_morsecomponents_seconddescr}.%

\begin{corollary}
For fixed $\Theta\subset\Sigma$ and $w\in\WC$ assume that $\langle\Theta(\phi)\rangle \subset w\langle\Theta\rangle$. Then the maps%
\begin{equation*}
  \rma^{\pm}_{\Theta,w}(t,r\cdot wb_{\Theta}) := \sigma^{\pm}_{\Theta,w}(\rma(t,r)),\quad \rma^{\pm}_{\Theta,w}:\R \tm \MC_{\Theta}(w) \rightarrow \R,%
\end{equation*}
are well-defined additive cocycles over the flow $\phi$ restricted to $\MC_{\Theta}(w)$.%
\end{corollary}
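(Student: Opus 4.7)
I would first reduce the claim to well-definedness of $\rma^{\pm}_{\Theta,w}$: once this is in hand, the cocycle identity and continuity follow immediately from those of $\rma$ together with linearity of $\sigma^{\pm}_{\Theta,w}$. So I fix $r_1, r_2 \in R_{\phi}$ with $r_1 \cdot w b_{\Theta} = r_2 \cdot w b_{\Theta}$ and aim to show $\sigma^{\pm}_{\Theta,w}(\rma(t,r_1)) = \sigma^{\pm}_{\Theta,w}(\rma(t,r_2))$ for every $t$.

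Projecting to the base $X$ places $r_1$ and $r_2$ in a common fibre of the $K_{\phi}$-reduction $R_{\phi}$, so $r_2 = r_1 \cdot k$ for some $k \in K_{\phi}$. The identity $w b_{\Theta} = k \cdot w b_{\Theta}$ forces $k \in w P_{\Theta} w^{-1}$; combined with $k \in K$ and the Langlands decomposition \eqref{eq_langlands}, which yields $K \cap P_{\Theta} = K_{\Theta}$, I get $k \in K_{\phi} \cap w K_{\Theta} w^{-1}$. For what follows only the weaker fact $k \in K_{\phi}$ matters.

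Next I would feed the hypothesis into Lemma \ref{lem_sigmapmann}. Since $\Theta(\phi) \subset \langle\Theta(\phi)\rangle \subset w\langle\Theta\rangle$, one has $\Theta(\phi) \cap w\langle\Theta\rangle = \Theta(\phi)$, so $\sigma^{\pm}_{\Theta,w}$ annihilates $\fa(\Theta(\phi))$. Consequently $\sigma^{\pm}_{\Theta,w}$ factors through the orthogonal projection $\fa \to \fa_{\Theta(\phi)}$, and well-definedness reduces to showing that, for $r \in R_{\phi}$ and $k \in K_{\phi}$, the $\fa_{\Theta(\phi)}$-component of $\rma(t,r)$ coincides with the $\fa_{\Theta(\phi)}$-component of $\rma(t, r k)$.

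This final step rests on the centrality of $\fa_{\Theta(\phi)}$ inside $\mathfrak{z}_{\phi}$: every $\alpha \in \langle\Theta(\phi)\rangle$ vanishes on $\fa_{\Theta(\phi)}$, so $A_{\Theta(\phi)} = \exp\fa_{\Theta(\phi)}$ lies in the centre of $Z_{\phi}$. Using the Iwasawa decomposition \eqref{eq_ztheta_iwasawa} together with the splitting $A = A(\Theta(\phi)) \cdot A_{\Theta(\phi)}$, any element of $Z_{\phi}$ takes the form $k_{\phi} \cdot a(\Theta(\phi)) \cdot a_{\Theta(\phi)} \cdot n_{\Theta}$; right multiplication by $k \in K_{\phi}$ only reshuffles the semisimple factor $k_{\phi} \cdot a(\Theta(\phi)) \cdot n_{\Theta}$ and leaves the central $a_{\Theta(\phi)}$ intact. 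Combined with the standing assumption $R_{\phi} \subset R$, so that the global $\fa$-cocycle on $R$ restricts on $R_{\phi}$ to the Iwasawa $\fa$-factor computed inside $Z_{\phi}$, this delivers the desired $K_{\phi}$-invariance. The hard part will be precisely this Iwasawa bookkeeping inside $Z_{\phi}$; an alternative route would be to verify directly that $K_{\phi} \cap w K_{\Theta} w^{-1} \subseteq K_{\Theta(\phi)}$ under the hypothesis and then apply Lemma \ref{lem_alvessanmartin} with $\Theta(\phi)$ in place of $\Theta$, but this requires a connectedness argument for centralizers in $K$ that is no more elementary.
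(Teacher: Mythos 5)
Your argument addresses only half of the well-definedness issue. You verify that the formula $\sigma^{\pm}_{\Theta,w}(\rma(t,r))$ is independent of the choice of $r\in R_{\phi}$ representing the point $r\cdot wb_{\Theta}$, reducing this to $k\in K_{\phi}$ plus Lemma \ref{lem_alvessanmartin}. That much is sound and essentially matches the first half of the paper's proof. But the Morse sets $\MC_{\Theta}(w)$ are parametrized by the double coset space $\WC_{\Theta(\phi)}\backslash\WC/\WC_{\Theta}$ rather than by $\WC$, so the same Morse set (and hence the same points in it) can be presented with a different representative $w' = w_1 w w_2$, $w_1\in\WC_{\Theta(\phi)}$, $w_2\in\WC_{\Theta}$, and one must also verify $\sigma^{\pm}_{\Theta,w}(\rma(t,r)) = \sigma^{\pm}_{\Theta,w'}(\rma(t,r))$. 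This is the part of the proof where the hypothesis $\langle\Theta(\phi)\rangle \subset w\langle\Theta\rangle$ does real work: the paper computes $\sigma^{\pm}_{\Theta,w}\circ w_1^{-1} = \sigma^{\pm}_{\Theta,w'}$ from the Weyl-group action on roots, then exploits that $w_1$ fixes $\fa_{\Theta(\phi)}$ pointwise while $\sigma^{\pm}_{\Theta,w}$ annihilates the complementary piece $\fa(\Theta(\phi))$ (which is exactly where the hypothesis enters via Lemma \ref{lem_sigmapmann}). Your proposal omits this check entirely, so it does not establish the claim.

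A smaller remark: you worry that the direct route via $K_{\phi}\cap wK_{\Theta}w^{-1}\subseteq K_{\Theta(\phi)}$ would require a connectedness argument for centralizers, but in fact $K_{\phi} = K_{\Theta(\phi)}$ holds outright (one has $Z_{H_{\phi}} = Z_{\Theta(\phi)}$ because $\Theta(\phi) = \Theta(H_{\phi})$), so the inclusion is trivial and the direct application of Lemma \ref{lem_alvessanmartin} with $\Theta(\phi)$ is the cleanest path. The ``Iwasawa bookkeeping inside $Z_{\phi}$'' you propose as your main route is, in effect, re-proving that lemma in the special case at hand and offers no advantage.
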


\begin{proof}
We have to show two things:%
\begin{enumerate}
\item[(i)] If $r \cdot wb_{\Theta} = r' \cdot wb_{\Theta}$, then $\sigma^{\pm}_{\Theta,w}(\rma(t,r)) = \sigma^{\pm}_{\Theta,w}(\rma(t,r'))$.%
\item[(ii)] If $w,w' \in \WC$ are such that $\MC_{\Theta}(w) = \MC_{\Theta}(w')$, then%
\begin{equation*}
  \sigma^{\pm}_{\Theta,w}(\rma(t,r)) = \sigma^{\pm}_{\Theta,w'}(\rma(t,r)).%
\end{equation*}
\end{enumerate}
To show (i), assume that $r\cdot wb_{\Theta} = r'\cdot wb_{\Theta}$ with $r,r' \in R_{\phi}$. Then there exists $k\in K_{\phi}$ with $r' = r\cdot k$ (since $K_{\phi}$ is the structural group of $R_{\phi}$ and thus acts transitively on the fibers) and hence $k \cdot wb_{\Theta} = wb_{\Theta}$. This implies $k\in K_{\Theta(\phi)\cap w\Theta}$. Indeed, note that \eqref{eq_kthetaintersec} implies $K_{\Theta(\phi)\cap w\Theta} = K_{\Theta(\phi)} \cap K_{w\Theta}$. However, $K_{\phi} = K_{\Theta(\phi)}$, so $k \in K_{\Theta(\phi)}$. To show that also $k\in K_{w\Theta}$, we use the Langlands decomposition \eqref{eq_langlands}. Then $k \cdot wb_{\Theta} = wb_{\Theta}$ translates into $w^{-1}kw K_{\Theta}AN^+ = K_{\Theta}AN^+$. By uniqueness, this implies $w^{-1}kw \in K_{\Theta}$, i.e., $\Ad(k)wH = wH$ for all $H\in\fa_{\Theta}$, or equivalently $\Ad(k)H = H$ for all $H \in w\fa_{\Theta} = \fa_{w\Theta}$, so $k \in K_{w\Theta}$. Lemma \ref{lem_alvessanmartin} together with Lemma \ref{lem_sigmapmann} then gives $\sigma^{\pm}_{\Theta,w}(\rma(t,r)) = \sigma^{\pm}_{\Theta,w}(\rma(t,r'))$.%
 
Now we show (ii). The Morse sets are parametrized by the double coset space $\WC_{\Theta(\phi)}\backslash\WC/\WC_{\Theta}$. Hence, if $\MC_{\Theta}(w)=\MC_{\Theta}(w')$, then $w' = w_1ww_2$ with $w_1\in\WC_{\Theta(\phi)}$ and $w_2 \in \WC_{\Theta}$. Using that $n_{\alpha} = n_{w\alpha}$ for each $w\in\WC$ (cf.~proof of Lemma \ref{lem_sigmapmann}), we get%
\begin{eqnarray*}
  \sigma^{\pm}_{\Theta,w} \circ w_1^{-1} &=& \sum_{\alpha\in\Pi^{\pm}_{\phi,\Theta,w}}n_{\alpha}(\alpha \circ w_1^{-1})\\
                                         &=& \sum_{\alpha\in\Pi^{\pm}_{\phi,\Theta,w}}n_{w_1\alpha}w_1\alpha = \sum_{\beta \in w_1\Pi^{\pm}_{\phi,\Theta,w}}n_{\beta}\beta.%
\end{eqnarray*}
Using that $\Pi^+ \backslash \langle\Theta(\phi)\rangle = \{\alpha\in\Pi : \alpha(H_{\phi})>0\}$ and $H_{\phi}\in\fa_{\Theta(\phi)}$ (and the analogous statement for $\Pi^-$ and $\Theta$) we find that%
\begin{eqnarray*}
  w_1\Pi^{\pm}_{\phi,\Theta,w} &=& w_1\left(\Pi^{\pm}\backslash\langle\Theta(\phi)\rangle\right) \cap w_1w\left(\Pi^-\backslash\langle\Theta\rangle\right)\\
                               &=& \left(\Pi^{\pm} \backslash \langle\Theta(\phi)\rangle\right) \cap w'w_2^{-1}\left(\Pi^-\backslash\langle\Theta\rangle\right)\\
                               &=& \left(\Pi^{\pm} \backslash \langle\Theta(\phi)\rangle\right) \cap w'\left(\Pi^-\backslash\langle\Theta\rangle\right) = \Pi^{\pm}_{\phi,\Theta,w'}.%
\end{eqnarray*}
Hence, $\sigma^{\pm}_{\Theta,w} \circ w_1^{-1} = \sigma^{\pm}_{\Theta,w'}$. If we write%
\begin{equation*}
  \rma(t,r) = \rma_1(t,r) + \rma_2(t,r) \in \fa_{\Theta(\phi)} \oplus \fa(\Theta(\phi)),%
\end{equation*}
then, since $w_1$ acts trivially on $\fa_{\Theta(\phi)}$,%
\begin{equation*}
  w_1^{-1}\rma(t,r) = \rma_1(t,r) + w_1^{-1}\rma_2(t,r).%
\end{equation*}
By assumption $\langle\Theta(\phi)\rangle \subset w\langle\Theta\rangle$ and consequently $\Theta(\phi) \cap w\langle\Theta\rangle = \Theta(\phi)$, which by Lemma \ref{lem_sigmapmann} implies that $\sigma^{\pm}_{\Theta,w}$ vanishes on $\fa(\Theta(\phi))$. Therefore, we obtain%
\begin{eqnarray*}
   \sigma^{\pm}_{\Theta,w'}(\rma(t,r)) &=& \sigma^{\pm}_{\Theta,w}(w_1^{-1}\rma(t,r))\\
          &=& \sigma^{\pm}_{\Theta,w}(\rma_1(t,r)) + \underbrace{\sigma^{\pm}_{\Theta,w}(w_1^{-1}\rma_2(t,r))}_{=0}\\
          &=& \sigma^{\pm}_{\Theta,w}(\rma_1(t,r) + \rma_2(t,r)) = \sigma^{\pm}_{\Theta,w}(\rma(t,r)),%
\end{eqnarray*}
concluding the proof.%
\end{proof}

The tangent space at the base point $b_{\Theta(\phi)} \in \F_{\Theta(\phi)}$ can be identified with the nilpotent Lie algebra%
\begin{equation*}
  \fn^-_{\phi} := \fn^-_{\Theta(\phi)} = \sum_{\alpha\in\Pi^-\backslash\langle\Theta(\phi)\rangle}\fg_{\alpha},%
\end{equation*}
since this is a complement of $\fp_{\Theta(\phi)}$ in $\fg$. The group $Z_{\phi}$ normalizes $\fn^-_{\phi}$ and therefore acts on $\fn^-_{\phi}$ via the adjoint action. Then we can consider the bundle%
\begin{equation*}
  \VC_{\phi} = Q_{\phi} \tm_{Z_{\phi}} \fn^-_{\phi} \rightarrow X.%
\end{equation*}
Because the $Z_{\phi}$-action is linear, this is a vector bundle and the flow $\Phi_t$, induced by $\phi_t$ on $\VC_{\phi}$, is linear. Now we define%
\begin{equation*}
  \B_{\phi} := Q_{\phi} \cdot N_{\phi}^- b_{\Theta(\phi)}%
\end{equation*}
and the map $\Psi:\VC_{\phi}\rightarrow\B_{\phi}$,%
\begin{equation}\label{eq_psidef}
  \Psi(q \cdot X) := q \cdot (\exp X)b_{\Theta(\phi)},\quad q \in Q_{\phi},\ X \in \fn^-_{\phi}.%
\end{equation}
The following proposition can be found in \cite[Prop.~5.5]{SMS}.%

\begin{proposition}
The following statements hold:%
\begin{enumerate}
\item[(i)] $\B_{\phi}$ is an open and dense $\phi$-invariant subset of $\E_{\Theta(\phi)}$ which contains the attractor component $\MC^+_{\Theta(\phi)} = \Psi(\VC^0_{\phi})$, where $\VC^0_{\phi}$ is the zero section of $\VC_{\phi}$.%
\item[(ii)] $\Psi$ is a homeomorphism which conjugates $\phi$ and $\Phi$, i.e.,%
\begin{equation}\label{eq_conjid}
  \phi_t(\Psi(v)) = \Psi(\Phi_t(v)),\quad v \in \VC_{\phi}.%
\end{equation}
\end{enumerate}
\end{proposition}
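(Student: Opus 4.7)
The plan is to exploit the Bruhat decomposition of the flag manifold $\F_{\Theta(\phi)}$ fiberwise and then globalize via the principal bundle structure. Two structural facts underlie everything: (a) since $\fg$ decomposes into $\ad(H_{\phi})$-eigenspaces with $\fp_{\Theta(\phi)}$ collecting the non-negative and $\fn^-_{\phi}$ the strictly negative ones, the map $X\mapsto \exp(X)b_{\Theta(\phi)}$ is a diffeomorphism from $\fn^-_{\phi}$ onto the open and dense Bruhat cell $N^-_{\phi}\cdot b_{\Theta(\phi)} \subset \F_{\Theta(\phi)}$; (b) because $\Ad(z)$ commutes with $\ad(H_{\phi})$ for $z\in Z_{\phi}$, the group $Z_{\phi}$ preserves both $\fp_{\Theta(\phi)}$ and $\fn^-_{\phi}$, so $Z_{\phi}\subset P_{\Theta(\phi)}$ (equivalently $z\cdot b_{\Theta(\phi)} = b_{\Theta(\phi)}$) and $\Ad(Z_{\phi})\fn^-_{\phi} = \fn^-_{\phi}$.

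For (i), $\phi$-invariance of $\B_{\phi}$ follows from $\phi_t(q\cdot nb_{\Theta(\phi)}) = \phi_t(q)\cdot nb_{\Theta(\phi)}$ together with $\phi_t(Q_{\phi})=Q_{\phi}$. For openness and density I would pass to a local trivialization: over an open $U\subset X$ on which $Q_{\phi}$ admits a continuous section $s:U\to Q_{\phi}$, the bundle $\E_{\Theta(\phi)}|_U$ is identified with $U\tm\F_{\Theta(\phi)}$ via $(x,y)\mapsto s(x)\cdot y$, under which $\B_{\phi}|_U$ corresponds to $U\tm (N^-_{\phi}\cdot b_{\Theta(\phi)})$. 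Fact (a) gives openness and density fiberwise, and since these are local topological properties they pass to $\B_{\phi}\subset\E_{\Theta(\phi)}$. The identity $\MC^+_{\Theta(\phi)} = \Psi(\VC^0_{\phi})$ follows by setting $w=1$ in \eqref{eq_morsecomponents_seconddescr} and using $\exp(0)=1$.

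For (ii), well-definedness of $\Psi$ on the quotient $\VC_{\phi} = Q_{\phi}\tm_{Z_{\phi}}\fn^-_{\phi}$ reduces to checking, for $z\in Z_{\phi}$, that $(q\cdot z)\cdot\exp(\Ad(z^{-1})X)b_{\Theta(\phi)} = q\cdot\exp(X)b_{\Theta(\phi)}$, which is immediate from $\exp(\Ad(z^{-1})X) = z^{-1}\exp(X)z$ combined with fact (b). Surjectivity onto $\B_{\phi}$ is the definition; for injectivity, if $q\cdot\exp(X)b_{\Theta(\phi)} = q'\cdot\exp(X')b_{\Theta(\phi)}$ with $q,q'\in Q_{\phi}$, write $q = q'\cdot z$ for some $z\in Z_{\phi}$ (since $Z_{\phi}$ is the structural group of $Q_{\phi}$); using $[q'z,y]=[q',zy]$ in the associated bundle together with fact (b), the equality becomes $\exp(\Ad(z)X)b_{\Theta(\phi)} = \exp(X')b_{\Theta(\phi)}$, and Bruhat uniqueness in (a) forces $X' = \Ad(z)X$, so $(q,X)$ and $(q',X')$ represent the same class in $\VC_{\phi}$. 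Continuity of $\Psi$ is inherited from its obvious continuous lift $(q,X)\mapsto q\cdot\exp(X)b_{\Theta(\phi)}$; continuity of $\Psi^{-1}$ is obtained by trivializing $Q_{\phi}$ locally through $s$ as in (i), writing each $y\in\B_{\phi}|_U$ uniquely as $s(x)\cdot\exp(X)b_{\Theta(\phi)}$ and noting that $X$ depends continuously on $y$ via the inverse Bruhat chart. Finally, \eqref{eq_conjid} is a one-line check using $\Phi_t([q,X]) = [\phi_t(q),X]$ on the associated bundle, since then both sides of $\phi_t\circ\Psi = \Psi\circ\Phi_t$ equal $\phi_t(q)\cdot\exp(X)b_{\Theta(\phi)}$.

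The main obstacle is ensuring that the fiberwise Bruhat-cell picture globalizes properly: openness and density of $\B_{\phi}$ as well as continuity of $\Psi^{-1}$ are not automatic from their fiber analogues, and both require the continuous local-section argument for $Q_{\phi}$ sketched above. Once that is in place, the remainder of the proof is driven entirely by the structural facts (a) and (b) and the uniqueness of the Bruhat parametrization.
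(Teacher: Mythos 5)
The paper does not supply its own proof of this proposition; it is quoted from San Martin and Seco (reference \cite{SMS}, Prop.~5.5), so there is no in-paper argument to compare against. Your reconstruction is correct and is the natural proof. The two structural facts you isolate carry all the weight: (a) the Bruhat big-cell diffeomorphism $X \mapsto \exp(X)\cdot b_{\Theta(\phi)}$ from $\fn^-_{\phi}$ onto the open dense cell $N^-_{\phi}\cdot b_{\Theta(\phi)}$, which holds because $\fn^-_{\phi}$ is exactly the sum of the strictly negative $\ad(H_{\phi})$-eigenspaces, complementary to $\fp_{\Theta(\phi)}$; and (b) the $Z_{\phi}$-invariance of both $\fp_{\Theta(\phi)}$ and $\fn^-_{\phi}$, since $\Ad(Z_{\phi})$ commutes with $\ad(H_{\phi})$, which gives $Z_{\phi}\subset P_{\Theta(\phi)}$ and makes the construction of $\Psi$ on the quotient $\VC_{\phi}=Q_{\phi}\tm_{Z_{\phi}}\fn^-_{\phi}$ well posed. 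You are also right to flag that openness, density, and continuity of $\Psi^{-1}$ do not follow fiberwise but need the local-section argument on the $Z_{\phi}$-principal subbundle $Q_{\phi}$: in a trivializing chart $\B_{\phi}|_U$ is identified with $U\tm\bigl(N^-_{\phi}\cdot b_{\Theta(\phi)}\bigr)$ and $\Psi$ with $(x,X)\mapsto(x,\exp(X)b_{\Theta(\phi)})$, after which the topological assertions and the continuity of the inverse Bruhat chart are all local and standard. The injectivity and well-definedness checks via the $Z_{\phi}$-equivalence together with Bruhat uniqueness, the computation $\MC^+_{\Theta(\phi)}=\Psi(\VC^0_{\phi})$ from \eqref{eq_morsecomponents_seconddescr} with $w=1$, and the one-line verification of the conjugacy identity are all correct.
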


We can endow $\VC_{\phi}\rightarrow X$ with the metric%
\begin{equation*}
  (r \cdot X,r \cdot Y) := B_{\theta}(X,Y),\quad r\in R_{\phi},\ X,Y\in\fn^-_{\phi}.%
\end{equation*}
Indeed, this defines a metric on all of $\VC_{\phi}$, which follows from the Iwasawa decomposition $Q_{\phi} = R_{\phi} \cdot AN^+(\phi)$, where $\fn^-_{\phi}$ is normalized by $AN^+(\phi)$, $N^+(\phi) = N^+(\Theta(\phi))$.%

\begin{proposition}\label{prop_cocycle_linearest}
There exist constants $\mu,B\in\R$ with $\mu>0$ such that%
\begin{equation*}
  \alpha(\rma(\tau,\xi)) \geq \mu\tau + B,\quad \forall\tau\geq0,%
\end{equation*}
for all $\xi\in\MC^+ = \MC(1)$ (the attractor component in $\E$) and $\alpha\in\Pi^+\backslash\langle\Theta(\phi)\rangle$.%
\end{proposition}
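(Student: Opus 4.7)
The natural starting point is that $\alpha(H_\phi)>0$ for every $\alpha\in\Pi^+\backslash\langle\Theta(\phi)\rangle$: since $H_\phi\in\cl\fa^+$, one has $\alpha(H_\phi)\geq0$, with equality iff $\alpha\in\langle\Theta(H_\phi)\rangle=\langle\Theta(\phi)\rangle$. Setting $\mu_0:=\alpha(H_\phi)>0$, the proposition reduces to the uniform asymptotic
\begin{equation*}
  \lim_{\tau\to\infty}\frac{1}{\tau}\alpha(\rma(\tau,\xi))=\mu_0\quad\mbox{uniformly in }\xi\in\MC^+.
\end{equation*}
Indeed, fixing any $\mu\in(0,\mu_0)$, uniform convergence supplies $T>0$ with $\alpha(\rma(\tau,\xi))\geq\mu\tau$ on $[T,\infty)\tm\MC^+$, while continuity of $\rma$ on the compact set $[0,T]\tm\MC^+$ bounds $\alpha\circ\rma$ from below on the short-time piece, so a sufficiently negative $B$ gives the claim for all $\tau\geq0$.

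The heart of the argument is to identify the Birkhoff averages of the continuous additive cocycle $f(\xi):=\alpha(\rma(1,\xi))$. For any ergodic $\phi|_{\MC^+}$-invariant probability measure $\nu$, Birkhoff's theorem gives $\frac{1}{n}\alpha(\rma(n,\xi))\to\int f\,d\nu$ for $\nu$-a.e.~$\xi$, and the critical claim is $\int f\,d\nu=\mu_0$ for every such $\nu$. I would establish this via the block reduction: for $\xi=r\cdot b_0\in\MC^+$ with $r\in R_\phi\subset R\cap Q_\phi$, the $\phi$-invariance of $h_\phi$ gives $\phi_t(r)\in Q_\phi$, so one writes $\phi_t(r)=r_t\cdot z(t,r)$ with $r_t\in R_\phi$ and $z(t,r)\in Z_\phi$. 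Decomposing $z(t,r)=k(t,r)a(t,r)n(t,r)$ via the Iwasawa decomposition $Z_\phi=K_\phi AN^+(\phi)$ of \eqref{eq_ztheta_iwasawa}, and using $K_\phi\subset K$, $N^+(\phi)\subset N^+$ together with uniqueness of the ambient Iwasawa $Q=R\cdot AN^+$, one obtains $\rma(t,r)=\log a(t,r)$. Since $A_\phi:=\exp(\fa_{\Theta(\phi)})$ is central in $Z_\phi=Z_{\Theta(\phi)}$, the projection $Z_\phi\to A_\phi$ is a Lie group homomorphism coinciding with the $\fa_{\Theta(\phi)}$-part of the Iwasawa $A$-component; hence the $\fa_{\Theta(\phi)}$-projection of $\rma$ is an additive $A_\phi$-valued cocycle whose Birkhoff drift is pinned to $H_\phi$ by the invariance $h_\phi\equiv H_\phi$ on $Q_\phi$, independently of $\nu$, while the residual $\fa(\Theta(\phi))$-contribution to $\alpha\circ\rma$ is controlled by a parallel structural argument inside the semisimple Levi $[Z_\phi,Z_\phi]$.

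With $\int f\,d\nu=\mu_0$ constant across all $\phi|_{\MC^+}$-invariant probability measures, a standard uniform ergodic theorem for continuous additive cocycles on a compact metric space yields the required uniform convergence, closing the argument. The main technical obstacle is the rigidity identification of the Birkhoff averages with $\mu_0$: it requires the structural interplay between the block reduction $Q_\phi$, the Iwasawa decomposition of the reductive Levi $Z_\phi$, the $\phi$-invariance of $h_\phi$, and cohomological control on the $\fa(\Theta(\phi))$-direction coming from the semisimple Levi dynamics. The uniform ergodic theorem and the short-time boundedness are essentially automatic once this identification is in hand.
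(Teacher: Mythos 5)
Your overall strategy---establish positivity of the Birkhoff averages of $f(\xi)=\alpha(\rma(1,\xi))$ for every ergodic invariant measure on $\MC^+$, then invoke a uniform ergodic theorem and treat $\tau\in[0,T]$ by compactness---is a sound outline, but the key step fails. You assert that $\int f\,\rmd\nu=\alpha(H_\phi)$ for \emph{every} ergodic $\phi|_{\MC^+}$-invariant probability $\nu$, deriving this from the constancy $h_\phi\equiv H_\phi$ on the block reduction $Q_\phi$. That identity is not correct: the constancy of $h_\phi$ on $Q_\phi$ records only the \emph{flag type} $\Theta(\phi)$, i.e., which simple roots annihilate $H_\phi$; it says nothing about the numerical Birkhoff drift, which in general sweeps out a non-degenerate interval (the Morse spectrum in direction $\alpha$) as $\nu$ varies. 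Concretely, take $G=\Sl(2,\R)$, $X=\mathrm{diag}(1,-1)\in\fa^+$, and the right-invariant system $\dot g=(1+u)Xg$ with $U=[-1/2,1/2]$; then $\Theta(\phi)=\emptyset$ and $\MC^+=\UC\tm\{[e_1]\}$, yet $\alpha(\rma(\tau,u,[e_1]))=2\int_0^\tau(1+u(s))\,\rmd s$, so the Birkhoff averages range over all of $[1,3]$ (already over Dirac measures at constant controls). They are not a single number. Moreover, Theorem \ref{thm_morsesets} only fixes the subset $\Theta(H_\phi)\subset\Sigma$ canonically, not the magnitude of $H_\phi$, so the right-hand side of your claimed identity is not even well-defined without further normalization.

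The statement you actually need, $\int f\,\rmd\nu>0$ for every ergodic $\nu$, is essentially equivalent to the proposition itself, so without an independent positivity argument your reduction is circular. The paper proves exactly this positivity, but geometrically rather than ergodically: using the conjugacy $\Psi$ between the linear flow $\Phi$ on $\VC_\phi=Q_\phi\tm_{Z_\phi}\fn^-_\phi$ and the flow on $\B_\phi\subset\E_{\Theta(\phi)}$, together with $\Phi_\tau(r\cdot Y)=\phi_\tau(r)\cdot Y$ and the fact that $N^+(\phi)$ centralizes $\fn^-_\phi$, one gets $\|\Phi_\tau\|_x=\|\Ad(a_\tau)|_{\fn^-_\phi}\|_\theta$ where $\log a_\tau=\rma(\tau,\xi)$, hence $\log\|\Phi_\tau\|_x\geq-\alpha(\rma(\tau,\xi))$ for each $\alpha\in\Pi^+\backslash\langle\Theta(\phi)\rangle$. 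Since the zero section $\VC^0_\phi=\Psi^{-1}(\MC^+_{\Theta(\phi)})$ is an attractor for $\Phi$, the standard estimate for linear flows with attracting zero section \cite[Lem.~5.2.7]{CKl} gives uniform $C,\mu>0$ with $\|\Phi_\tau\|_x\leq C\rme^{-\mu\tau}$, and the proposition follows with $B=\log C^{-1}$. Phrased in your language: the attractor property of $\VC^0_\phi$ is the mechanism that forces uniform positivity of the Birkhoff averages, and it cannot be replaced by the asserted rigidity $\int f\,\rmd\nu=\alpha(H_\phi)$.
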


\begin{proof}
Write $\xi\in\MC^+$ as $\xi = r\cdot b_0$, $r\in R_{\phi}$. Then, using \eqref{eq_ztheta_iwasawa}, for every $\tau\geq0$ we have the Iwasawa decomposition%
\begin{equation*}
  \phi_{\tau}(r) = r_{\tau} \cdot a_{\tau}n_{\tau} \in R_{\phi} \cdot AN^+(\phi),%
\end{equation*}
and it holds that%
\begin{equation*}
  \rma(\tau,\xi) = \log a_{\tau}.%
\end{equation*}
On the other hand, if $v = r\cdot Y\in\VC_{\phi}$ with $Y\in\fn^-_{\phi}$, then $|v| = |Y|_{\theta}$ by definition of the metric on $\VC_{\phi}$. From the conjugacy identity \eqref{eq_conjid} and \eqref{eq_psidef} it follows that%
\begin{equation*}
  \Phi_{\tau}(v) = \Phi_{\tau}(r \cdot Y) = \Psi^{-1}(\phi_{\tau}(r)(\exp Y)b_{\Theta(\phi)}) = \phi_{\tau}(r) \cdot Y.%
\end{equation*}
Then we find that%
\begin{equation*}
  |\Phi_{\tau}(v)| = |\phi_{\tau}(r)\cdot Y| = |r_{\tau} \cdot \Ad(a_{\tau}n_{\tau})Y| = |\Ad(a_{\tau}n_{\tau})Y|_{\theta}.%
\end{equation*}
Since $N^+(\phi)$ centralizes $\fn^-_{\phi}$, we have $\Ad(n_{\tau})Y=Y$ so that $|\Phi_{\tau}(v)| = |\Ad(a_{\tau})Y|_{\theta}$. Therefore,%
\begin{equation*}
  \|\Phi_{\tau}\|_x = \left\|\Ad(a_{\tau})|_{\fn^-_{\phi}}\right\|_{\theta},\quad x = \pi(\xi) \in X.%
\end{equation*}
The operator $\Ad(a_{\tau})|_{\fn^-_{\phi}}$ is positive definite (one can write $\Ad(a_{\tau}) = \rme^{\ad(\log a_{\tau})}$), so that $\|\Ad(a_{\tau})|_{\fn^-_{\phi}}\|_{\theta}$ is equal to the greatest eigenvalue. The eigenvalues are given by $\rme^{\alpha(\log a_{\tau})}$, $\alpha\in\Pi^-\backslash\langle\Theta(\phi)\rangle$, since the eigenvalues of $\ad(\log a_{\tau})|_{\fn^-_{\phi}}$ are $\alpha(\log a_{\tau})$, $\alpha(H_{\phi})<0$. This implies%
\begin{equation*}
  \log\|\Phi_{\tau}\|_x \geq -\alpha(\log a_{\tau}) = -\alpha(\rma(\tau,\xi)),\quad \forall \alpha\in\Pi^+\backslash\langle\Theta(\phi)\rangle.%
\end{equation*}
For every linear flow, whose zero section is an attractor, there are $C,\mu>0$ (cf.~\cite[Lem.~5.2.7]{CKl}) with%
\begin{equation*}
  \|\Phi_{\tau}\|_x \leq C\rme^{-\mu\tau},\quad \tau\geq0,\ x\in X.%
\end{equation*}
With $B = \log C^{-1}$ we obtain the assertion.%
\end{proof}

\section{Right-Invariant Control Systems and Hyperbolic Chain Control Sets}\label{sec_hyperbolic}%

In this section, we study control-affine systems on flag manifolds induced by right-invariant systems on the associated semisimple Lie group. In particular, we provide a characterization of the chain control sets which are uniformly hyperbolic in terms of the flag type of the control flow.%

Consider a connected semisimple non-compact Lie group $G$ with finite center, and choose an Iwasawa decomposition $G=KAN^{\pm}$.  A control-affine system%
\begin{equation*}
  \dot{g}(t) = f_0(g(t)) + \sum_{i=1}^mu_i(t)f_i(g(t)),\quad u\in\UC,%
\end{equation*}
where $f_0,f_1,\ldots,f_m$ are right-invariant vector fields on $G$, is called a \emph{right-invariant control system}. The set $\UC$ of control functions is given by%
\begin{equation*}
  \UC = \left\{u:\R\rightarrow\R^m\ :\ u \mbox{ measurable with } u(t) \in U \mbox{ a.e.}\right\},%
\end{equation*}
where $U\subset\R^m$ is a compact convex set with $0\in\inner U$.
It can easily be shown that each trajectory of this system is defined on the whole time axis (cf.~\cite[Lem.~2.1]{JSu}). Then $\UC$, endowed with the weak$^*$-topology of $L^{\infty}(\R,\R^m) = L^1(\R,\R^m)^*$, is a compact metric space, and the associated control flow%
\begin{equation*}
  \phi_t:\UC\tm G \rightarrow \UC\tm G,\quad \phi_t(u,g) = (\theta_tu,\varphi(t,g,u)),%
\end{equation*}
is a continuous dynamical system. Moreover, $\phi$ is a flow of automorphisms of the trivial $G$-principal bundle $\pi:\UC\tm G\rightarrow\UC$ with chain transitive base flow $\theta$.%

Such a system induces on each of the flag manifolds $\F_{\Theta}$ a control-affine system%
\begin{equation*}
  \dot{x}_{\Theta}(t) = \overline{f}_0(x_{\Theta}(t)) + \sum_{i=1}^mu_i(t)\overline{f}_i(x_{\Theta}(t)),\quad u\in\UC,%
\end{equation*}
with vector fields%
\begin{equation*}
  \overline{f}_i(\pi_{\Theta}(g)) = \left(\rmd\pi_{\Theta}\right)_g f_i(g),%
\end{equation*}
where here $\pi_{\Theta}:G\rightarrow\F_{\Theta}$ denotes the canonical projection $g \mapsto gP_{\Theta}$. We have%
\begin{equation*}
  \varphi_{\Theta}(t,x,u) \equiv \psi_{t,u} \cdot x,\quad \psi_{t,u} := \varphi_{t,u}(1),%
\end{equation*}
where $1\in G$ denotes the neutral element and $\cdot$ the canonical left action of $G$ on $\F_{\Theta}$, $h \cdot gP_{\Theta} = (hg)P_{\Theta}$.%

By Theorem \ref{thm_morsesets} there exist $H_{\phi}\in\cl\fa^+$ and a continuous map%
\begin{equation*}
  h_{\phi}:\UC \tm G \rightarrow \Ad(G)H_{\phi}%
\end{equation*}
with the properties%
\begin{enumerate}
\item[(i)] $h_{\phi}(\phi_t(u,g)) = h_{\phi}(u,g)$, $u\in\UC$, $g\in G$, $t\in\R$,%
\item[(ii)] $h_{\phi}(u,hg) = \Ad(g^{-1})h_{\phi}(u,h)$, $u\in\UC$, $h,g\in G$.%
\end{enumerate}
In the following, we denote the control flow on $\F_{\Theta}$ also by $\phi$ and the transition map by $\varphi$. According to Theorem \ref{thm_morsesets}, the Morse sets of $\phi$ on $\F_{\Theta}$ can be described fiberwise as%
\begin{equation}\label{eq_mcthetafiberwise}
  \MC_{\Theta}(w)_u = \{u\} \tm g \cdot \fix_{\Theta}(h_{\phi}(u,g),w),\quad (u,g)\in\UC \tm G.%
\end{equation}
The continuous map defined by%
\begin{equation*}
  \rmh:\UC \rightarrow \Ad(G)H_{\phi},\quad \rmh(u) := h_{\phi}(u,1),%
\end{equation*}
satisfies the following properties.%

\begin{lemma}\label{lem_rmhprops}\
\begin{enumerate}
\item[(i)] $\rmh(u) = \Ad(g)H_{\phi}$ for all $(u,g) \in Q_{\phi}$.%
\item[(ii)] $\rmh(\theta_tu) = \Ad(\psi_{t,u})\rmh(u)$ for all $u\in\UC$ and $t\in\R$.%
\end{enumerate}
\end{lemma}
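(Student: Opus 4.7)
The plan is to derive both statements directly from the two defining properties of $h_\phi$ recalled just before the lemma—namely $\phi$-invariance and $G$-equivariance—together with the definition $\rmh(u) = h_\phi(u,1)$ and the description $Q_\phi = h_\phi^{-1}(H_\phi)$ from the block-reduction proposition.

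For part (i), I would fix $(u,g) \in Q_\phi$, which by definition means $h_\phi(u,g) = H_\phi$. The equivariance property $h_\phi(u,hg) = \Ad(g^{-1})h_\phi(u,h)$ applied with $h = 1$ gives $h_\phi(u,g) = \Ad(g^{-1})h_\phi(u,1) = \Ad(g^{-1})\rmh(u)$. Combining these two identities yields $\Ad(g^{-1})\rmh(u) = H_\phi$, so $\rmh(u) = \Ad(g)H_\phi$.

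For part (ii), the idea is to apply $\phi$-invariance along the orbit of $(u,1)$. By the cocycle identity and the definition of $\psi_{t,u}$, the control flow acts by $\phi_t(u,1) = (\theta_tu, \psi_{t,u})$, hence
\begin{equation*}
  \rmh(u) = h_\phi(u,1) = h_\phi(\phi_t(u,1)) = h_\phi(\theta_tu,\psi_{t,u}).
\end{equation*}
Then equivariance (with base point $(\theta_tu,1)$ right-translated by $\psi_{t,u}$) rewrites the right-hand side as $\Ad(\psi_{t,u}^{-1}) h_\phi(\theta_tu,1) = \Ad(\psi_{t,u}^{-1})\rmh(\theta_tu)$. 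Solving for $\rmh(\theta_tu)$ gives the desired formula.

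I do not anticipate any real obstacle here: the whole argument is bookkeeping with the two properties of $h_\phi$ and the explicit form of the control flow on the trivial principal bundle $\UC \times G$. The only point that requires a moment of care is making sure the direction of the right action of $G$ on itself matches the convention under which $h_\phi$ was declared equivariant; once that is fixed, both identities are one-line consequences.
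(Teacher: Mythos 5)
Your proof is correct and matches the paper's argument in substance: both parts rest on exactly the same two ingredients ($G$-equivariance and $\phi$-invariance of $h_\phi$) together with the description $Q_\phi = h_\phi^{-1}(H_\phi)$, and the computation is the same up to the choice of which side of the identity to start from. The minor cosmetic difference is that the paper writes $1 = gg^{-1}$ (resp. $1 = \psi_{t,u}\psi_{t,u}^{-1}$) and expands from $\rmh(u)$ (resp. $\rmh(\theta_t u)$) directly, whereas you solve an equivalent relation for the desired quantity.
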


\begin{proof}
Item (i) follows from $G$-equivariance:%
\begin{equation*}
  \rmh(u) = h_{\phi}(u,1) = h_{\phi}(u,gg^{-1}) = \Ad(g)h_{\phi}(u,g) = \Ad(g)H_{\phi}.%
\end{equation*}
Item (ii) follows from $G$-equivariance and $\phi$-invariance:%
\begin{eqnarray*}
  \rmh(\theta_tu) &=& h_{\phi}(\theta_tu,1) = h_{\phi}(\theta_tu,\psi_{t,u}\psi_{t,u}^{-1})\\
                  &=& \Ad(\psi_{t,u})h_{\phi}(\theta_tu,\psi_{t,u}) = \Ad(\psi_{t,u})h_{\phi}(u,1) = \Ad(\psi_{t,u})\rmh(u).%
\end{eqnarray*}
\end{proof}

Now from \eqref{eq_mcthetafiberwise}, putting $g=1$, it follows that%
\begin{equation*}
  \MC_{\Theta}(w)_u = \{u\} \tm \fix_{\Theta}(\rmh(u),w).%
\end{equation*}
The chain control sets are the projections of the Morse sets to $\F_{\Theta}$ (see Proposition \ref{prop_ccs}) and thus are given by%
\begin{equation}\label{eq_chaincontrolset}
  E_{\Theta,w} := \bigcup_{u\in\UC}\fix_{\Theta}(\rmh(u),w).%
\end{equation}
In order to study the linearization of the control flow on the chain control sets, we introduce some additional notation. If $X\in\fg$, we also write $X$ for the vector field on $\F_{\Theta}$ given by%
\begin{equation*}
  X(x) = \frac{\rmd}{\rmd t}\Bigl|_{t=0}\rme^{tX} \cdot x.%
\end{equation*}
For every linear subspace $\fl \subset \fg$ we put%
\begin{equation*}
  \fl \cdot x := \left\{X(x) \in T_x\F_{\Theta}\ :\ X\in\fl\right\}.%
\end{equation*}
Each $g\in G$ acts as a diffeomorphism on $\F_{\Theta}$ and it follows from an elementary computation that%
\begin{equation}\label{eq_diffeoder}
  (\rmd g)_xX(x) = (\Ad(g)X)(gx).%
\end{equation}

Take $(u,x) \in \MC_{\Theta}(w)$, i.e., $x\in\fix_{\Theta}(\rmh(u),w)$. By \eqref{eq_morsecomponents_seconddescr} it holds that%
\begin{equation*}
  \MC_{\Theta}(w) = R_{\phi}\cdot wb_{\Theta},\quad R_{\phi} = \left\{(u,k)\in\UC\tm K\ :\ \rmh(u) = \Ad(k)H_{\phi}\right\},%
\end{equation*}
and therefore $x = k_u \cdot wb_{\Theta}$ with $\rmh(u) = \Ad(k_u)H_{\phi}$. Then we can describe the tangent space to $\F_{\Theta}$ at $x$ by%
\begin{equation}\label{eq_fthetatangentspace}
  T_x\F_{\Theta} = \Ad(k_u)w\fn^-_{\Theta} \cdot x,%
\end{equation}
since $\fn^-_{\Theta}$ is a complement of $\fp_{\Theta}$ in $\fg$. We define%
\begin{eqnarray*}
  \SC_{\Theta,w}(u,x) := \fn^-_{\rmh(u)} \cdot x, &\quad& \UC_{\Theta,w}(u,x) := \fn^+_{\rmh(u)}\cdot x,\\
  \CC_{\Theta,w}(u,x) \!\!\!&:=&\!\!\! \fz_{\rmh(u)} \cdot x.%
\end{eqnarray*}
Note that, in general, $\rmh(u)$ is not contained in the initial maximal abelian subspace $\fa$, but in the one conjugated by $k_u$, and%
\begin{equation}\label{eq_fnpm_u}
  \fn^{\pm}_{\rmh(u)} = \fn^{\pm}_{\Ad(k_u)H_{\phi}} = \Ad(k_u)\fn^{\pm}_{\phi}.%
\end{equation}

\begin{proposition}\label{prop_centerbundle}
It holds that%
\begin{equation*}
  \CC_{\Theta,w}(u,x) = T_x\fix_{\Theta}(\rmh(u),w).%
\end{equation*}
\end{proposition}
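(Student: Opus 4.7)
The plan is to recognize $\fix_{\Theta}(\rmh(u),w)$ as an orbit of the centralizer $Z_{\rmh(u)}$ through $x$, so that the claim reduces to the general Lie-theoretic fact that the tangent space to a smooth orbit at a point is the image of the Lie algebra under the infinitesimal action.

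First I would unwind the description of $x$. Since $(u,x)\in\MC_{\Theta}(w)$ and $\MC_{\Theta}(w) = R_{\phi}\cdot wb_{\Theta}$, there exists $k_u\in K$ with $(u,k_u)\in R_{\phi}$, i.e.\ $\rmh(u)=\Ad(k_u)H_{\phi}$, and $x = k_u\cdot wb_{\Theta}$. Applying the conjugated version of the formula $\fix_{\Theta}(H,w)=Z_H\cdot wb_{\Theta}$ recalled in Section~\ref{sec_semisimple} (which applies since $\rmh(u)\in\Ad(k_u)\cl\fa^+$) yields
\begin{equation*}
  \fix_{\Theta}(\rmh(u),w) \;=\; Z_{\rmh(u)}\cdot\bigl(k_u\cdot wb_{\Theta}\bigr) \;=\; Z_{\rmh(u)}\cdot x.
\end{equation*}

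Next I would invoke the standard fact that for a smooth action of a Lie group $H$ on a manifold and $x$ in the manifold, the orbit $H\cdot x$ is an immersed submanifold whose tangent space at $x$ equals the image of $\mathrm{Lie}(H)$ under the infinitesimal action $X\mapsto X(x)=\tfrac{\rmd}{\rmd t}\bigr|_{t=0}\exp(tX)\cdot x$. The Lie algebra of $Z_{\rmh(u)}$ is precisely $\fz_{\rmh(u)}$, the centralizer of $\rmh(u)$ in $\fg$ (with $\fz_{\rmh(u)}=\Ad(k_u)\fz_{H_{\phi}}$). Combined with the previous step and the fact that $\fix_{\Theta}(\rmh(u),w)$ is a compact submanifold (cf.\ Section~\ref{sec_semisimple}), one obtains
\begin{equation*}
  T_x\fix_{\Theta}(\rmh(u),w) \;=\; \fz_{\rmh(u)}\cdot x \;=\; \CC_{\Theta,w}(u,x),
\end{equation*}
the last equality being the definition of $\CC_{\Theta,w}$.

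There is essentially no hard step; the statement is a tautology once the fix component is identified with a $Z_{\rmh(u)}$-orbit. The only bookkeeping is keeping track of the $k_u$-conjugation, which is the same conjugation that relates $\fn^{\pm}_{\rmh(u)}$ and $\fn^{\pm}_{\phi}$ in \eqref{eq_fnpm_u}; a single line using $\Ad(k_u)\fz_{H_{\phi}}=\fz_{\rmh(u)}$ suffices to make this transparent.
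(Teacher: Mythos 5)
Your proposal is correct and takes essentially the same approach as the paper: both identify $\fix_{\Theta}(\rmh(u),w)$ as a $Z_{\rmh(u)}$-orbit through $x$ and pass to the Lie algebra $\fz_{\rmh(u)}$ acting infinitesimally. The only difference is one of presentation: you invoke the general fact that the tangent space of a smooth orbit is the image of the Lie algebra under the infinitesimal action, whereas the paper makes the same point by explicitly lifting a curve $\gamma(t)=k\widetilde{\gamma}(t)\cdot wb_{\Theta}$ with $\widetilde{\gamma}(t)\in Z_{H_{\phi}}$, differentiating $k\rme^{tX}\cdot wb_{\Theta}$ at $t=0$ to land in $\Ad(k)\fz_{H_{\phi}}\cdot x=\fz_{\rmh(u)}\cdot x$, and then observing that every $X\in\fz_{H_{\phi}}$ conversely yields a curve, giving the reverse inclusion.
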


\begin{proof}
Let $\gamma:\R\rightarrow\fix_{\Theta}(\rmh(u),w)$ be a smooth curve with $\gamma(0) = x = k \cdot wb_{\Theta}$. Since $\fix_{\Theta}(\rmh(u),w) = k \cdot \fix_{\Theta}(H_{\phi},w) = (kZ_{H_{\phi}}) \cdot wb_{\Theta}$, we can write $\gamma(t) = k\widetilde{\gamma}(t) \cdot wb_{\Theta}$ with $\widetilde{\gamma}(t) \in Z_{H_{\phi}}$ and $\widetilde{\gamma}(0) = 1$. Indeed, we can assume that $\widetilde{\gamma}(t) = \rme^{tX}$ with $X \in \fz_{H_{\phi}}$. Then, writing $i_k(g)=kgk^{-1}$, we find%
\begin{eqnarray*}
  \frac{\rmd}{\rmd t}\Bigl|_{t=0}\gamma(t) &=& \frac{\rmd}{\rmd t}\Bigl|_{t=0} k\rme^{tX} \cdot wb_{\Theta} = \frac{\rmd}{\rmd t}\Bigl|_{t=0} i_k(\rme^{tX}) \cdot x\\
                                           &=& \frac{\rmd}{\rmd t}\Bigl|_{t=0} \rme^{t\Ad(k)X} \cdot x = (\Ad(k)X)(x) \in \fz_{\rmh(u)} \cdot x,%
\end{eqnarray*}
since $\ad(\Ad(k)X)\rmh(u) = [\Ad(k)X,\rmh(u)] = [\Ad(k)X,\Ad(k)H_{\phi}] = \Ad(k)[X,H_{\phi}] = \Ad(k)\ad(X)H_{\phi} = 0$. Hence, $T_x\fix_{\Theta}(\rmh(u),w) \subset \fz_{\rmh(u)}\cdot x$. The equality follows, since an arbitrary choice of $X\in\fz_{H_{\phi}}$ yields a corresponding curve in $\fix_{\Theta}(\rmh(u),w)$.%
\end{proof}

\begin{proposition}\label{prop_dec}
For every $(u,x)\in\MC_{\Theta}(w)$ we have the decomposition%
\begin{equation}\label{eq_tsdec}
  T_x\F_{\Theta} = \SC_{\Theta,w}(u,x) \oplus \CC_{\Theta,w}(u,x) \oplus \UC_{\Theta,w}(u,x).%
\end{equation}
These subspaces are $\phi$-invariant, i.e.,%
\begin{equation*}
  (\rmd\varphi_{t,u})_x\SC_{\Theta,w}(u,x) = \SC_{\Theta,w}(\phi_t(u,x)),%
\end{equation*}
and analogously for the other two subspaces. Moreover, their dimensions are constant on $\MC_{\Theta}(w)$ and they are the fibers of continuous subbundles.%
\end{proposition}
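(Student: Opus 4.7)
The plan is to establish the three assertions (direct-sum decomposition, $\phi$-invariance, constant rank with continuous dependence) in sequence, leveraging the intrinsic description of $\fn^{\pm}_{\rmh(u)}$ and $\fz_{\rmh(u)}$ as the sums of the negative/positive/zero eigenspaces of $\ad(\rmh(u))$. Since $\rmh(u) = \Ad(k_u)H_{\phi}$ lies in the adjoint orbit of $H_{\phi}$, the endomorphism $\ad(\rmh(u))$ is semisimple with the same spectrum as $\ad(H_{\phi})$, so these three subspaces of $\fg$ have $u$-independent dimensions and
$$\fg = \fn^-_{\rmh(u)} \oplus \fz_{\rmh(u)} \oplus \fn^+_{\rmh(u)}.$$

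First I would establish \eqref{eq_tsdec}. The evaluation $\fg \to T_x\F_{\Theta}$, $X\mapsto X(x)$, is surjective with kernel the isotropy subalgebra of $x = k_u \cdot wb_{\Theta}$, namely $\Ad(k_uw)\fp_{\Theta}$. Using \eqref{eq_diffeoder} to push everything through $k_u$, it suffices to treat $x = wb_{\Theta}$, where the kernel is $\Ad(w)\fp_{\Theta}$. The key point is that
$$\Ad(w)\fp_{\Theta} = \fm \oplus \fa \oplus \Ad(w)\fn^+ \oplus \Ad(w)\fn^-(\Theta)$$
is a sum of $\fm \oplus \fa$ with root spaces $\fg_{\beta}$ of the original $\fa$ (because $\Ad(w)\fg_{\alpha} = \fg_{w\alpha}$ and $\Ad(w)$ preserves $\fm$ and $\fa$). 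Each such $\fg_{\beta}$ lies in exactly one of $\fn^-_{\phi}, \fz_{\phi}, \fn^+_{\phi}$ according to the sign of $\beta(H_{\phi})$, and $\fm \oplus \fa \subset \fz_{\phi}$. Hence $\Ad(w)\fp_{\Theta}$ splits as the direct sum of its three intersections with $\fn^-_{\phi}, \fz_{\phi}, \fn^+_{\phi}$, and passing to the quotient yields \eqref{eq_tsdec} at $x=wb_{\Theta}$; conjugation by $k_u$ transports it to arbitrary $x\in\MC_{\Theta}(w)_u$.

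Next I would verify $\phi$-invariance. Since $\varphi_{t,u}$ acts on $\F_{\Theta}$ as left multiplication by $\psi_{t,u}$, identity \eqref{eq_diffeoder} gives
$$(\rmd\varphi_{t,u})_xX(x) = (\Ad(\psi_{t,u})X)(\varphi(t,x,u)), \quad X\in\fg.$$
Combined with Lemma \ref{lem_rmhprops}(ii), which gives $\rmh(\theta_tu) = \Ad(\psi_{t,u})\rmh(u)$, and the elementary fact that $\Ad(g)$ intertwines $\ad(H)$ with $\ad(\Ad(g)H)$, one reads off $\Ad(\psi_{t,u})\fn^{\pm}_{\rmh(u)} = \fn^{\pm}_{\rmh(\theta_tu)}$ and $\Ad(\psi_{t,u})\fz_{\rmh(u)} = \fz_{\rmh(\theta_tu)}$, which immediately gives $\phi$-invariance of each of the three subbundles.

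Finally, for the dimension and continuity statement, the first step yields
$$\dim\SC_{\Theta,w}(u,x) = \dim\fn^-_{\phi} - \dim(\fn^-_{\phi}\cap \Ad(w)\fp_{\Theta}),$$
and analogously for $\CC_{\Theta,w}$ and $\UC_{\Theta,w}$, so the ranks depend only on $w$. Continuity will follow because $u\mapsto\rmh(u)$ is continuous by Theorem \ref{thm_morsesets}, the eigenspaces of $\ad(\rmh(u))$ depend continuously on $\rmh(u)$ (the spectrum being fixed along the adjoint orbit of $H_{\phi}$), and $(x,X)\mapsto X(x)$ is smooth. The main obstacle is the first step: verifying that $\Ad(w)\fp_{\Theta}$ is genuinely diagonal with respect to the Bruhat-type splitting of $\fg$ determined by $H_{\phi}$. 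Once this compatibility is in hand, the remaining content of the proposition is essentially formal.
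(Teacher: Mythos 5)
Your argument is correct, and the skeleton matches the paper's: view $T_x\F_{\Theta}$ as a quotient of $\fg$ by the isotropy algebra, use the eigendecomposition $\fg = \fn^-_{\rmh(u)}\oplus\fz_{\rmh(u)}\oplus\fn^+_{\rmh(u)}$ of $\ad(\rmh(u))$, transport by $(\rmd k_u)_{wb_{\Theta}}$, and deduce $\phi$-invariance from Lemma~\ref{lem_rmhprops}(ii) combined with \eqref{eq_diffeoder}. You diverge from the paper in two places, both minor but worth noting. First, to show the sum is direct the paper computes the dimension of each summand as a sum of root multiplicities over $\Pi^{\pm}_{\phi,\Theta,w}$ and $\langle\Theta(\phi)\rangle\cap w(\Pi^-\backslash\langle\Theta\rangle)$, and checks they add to $\dim\fn^-_{\Theta}$; your observation that the isotropy algebra $\Ad(w)\fp_{\Theta}$ is graded with respect to the $H_{\phi}$-eigendecomposition (it is a sum of $\fm\oplus\fa\subset\fz_{\phi}$ together with root spaces $\fg_{w\alpha}$, each sitting in exactly one of $\fn^{\pm}_{\phi},\fz_{\phi}$) gives directness in one stroke. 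Your route is a bit cleaner here; the paper's explicit root count has the side benefit that the same bookkeeping is reused in the proof of the main theorem to see that $\CC_{\Theta,w}$ is trivial iff $\langle\Theta(\phi)\rangle\subset w\langle\Theta\rangle$. Second, for continuity the paper shows the subbundles are closed (a sequential argument using boundedness of the representatives $X_n\in w\fn^-_{\Theta}$ and compactness of $K$) and then cites the general fact that a closed constant-rank subbundle of a continuous vector bundle is continuous; you instead argue directly from continuity of $\rmh$, continuity of the spectral projectors of $\ad(\rmh(u))$ along the adjoint orbit (spectrum fixed, multiplicities constant), and smoothness of $(x,X)\mapsto X(x)$. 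Your sketch implicitly uses that the image of a continuously varying subspace under a continuously varying family of linear maps, with constant image rank, varies continuously in the Grassmannian; this is standard but would be worth a sentence when writing it out.
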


\begin{proof}
We subdivide the proof into four steps.%

\emph{Step 1.} We show that the decomposition \eqref{eq_tsdec} holds. First observe that for every $H\in\fa$ the Lie algebra $\fg$ decomposes as%
\begin{equation*}
  \fg = \fn^-_H \oplus \fz_H \oplus \fn^+_H,%
\end{equation*}
where $\fz_H$ is the centralizer of $H$, which is the $0$-eigenspace of $\ad(H)$, while $\fn^{\pm}_H$ are the sums of the eigenspaces corresponding to positive and negative eigenvalues, respectively. Then%
\begin{equation*}
  T_{b_{\Theta}}\F_{\Theta} = (\rmd\pi_{\Theta})_1 \fn^-_H + (\rmd\pi_{\Theta})_1 \fz_H + (\rmd\pi_{\Theta})_1 \fn^+_H.%
\end{equation*}
Since%
\begin{equation*}
  X(b_{\Theta}) = \frac{\rmd}{\rmd t}\Bigl|_{t=0}\rme^{tX}\cdot b_{\Theta} = \frac{\rmd}{\rmd t}\Bigl|_{t=0}\pi_{\Theta}(\rme^{tX}) = (\rmd\pi_{\Theta})_1 X,%
\end{equation*}
the identity $(\rmd\pi_{\Theta})_1\fn^{\pm}_H = \fn^{\pm}_H\cdot b_{\Theta}$ holds. The analogous argument applies to $\fz_H$. Hence, we have%
\begin{equation*}
  T_{b_{\Theta}}\F_{\Theta} = \fn^-_H\cdot b_{\Theta} + \fz_H \cdot b_{\Theta} + \fn^+_H \cdot b_{\Theta}.%
\end{equation*}
We can write $w = \Ad(k)|_{\fa}$ with $k\in M^*$. Looking at the diffeomorphism $k:\F_{\Theta}\rightarrow\F_{\Theta}$, we obtain%
\begin{eqnarray*}
  T_{wb_{\Theta}}\F_{\Theta} &=& (\rmd k)_{b_{\Theta}}\fn^-_H\cdot b_{\Theta} + (\rmd k)_{b_{\Theta}}\fz_H\cdot b_{\Theta} + (\rmd k)_{b_{\Theta}}\fn^+_H\cdot b_{\Theta}\\
                             &=& \Ad(k)\fn^-_H \cdot wb_{\Theta} + \Ad(k)\fz_H \cdot wb_{\Theta} + \Ad(k)\fn^+_H \cdot wb_{\Theta}\\
                             &=& \fn^-_{\Ad(k)H} \cdot wb_{\Theta} + \fz_{\Ad(k)H} \cdot wb_{\Theta} + \fn^+_{\Ad(k)H} \cdot wb_{\Theta}\\
                             &=& \fn^-_{wH} \cdot wb_{\Theta} + \fz_{wH} \cdot wb_{\Theta} + \fn^+_{wH} \cdot wb_{\Theta}.%
\end{eqnarray*}
Here we used that $\Ad(k)\fg_{\alpha} = \fg_{\Ad(k)\alpha}$. Now let $(u,x)\in\MC_{\Theta}(w)$. Similarly,%
\begin{equation*}
  T_x\F_{\Theta} = \fn^-_{\Ad(k_u)wH}\cdot x + \fz_{\Ad(k_u)wH}\cdot x + \fn^+_{\Ad(k_u)wH}\cdot x,%
\end{equation*}
Putting $H = w^{-1}H_{\phi}$, we arrive at%
\begin{equation}\label{eq_tangentspace_sum}
  T_x\F_{\Theta} = \SC_{\Theta,w}(u,x) + \CC_{\Theta,w}(u,x) + \UC_{\Theta,w}(u,x).%
\end{equation}

\emph{Step 2.} We show that the dimensions of the subspaces are constant and that the decomposition is a direct sum. Consider $\SC_{\Theta,w}(u,x)$. Since $x = k_u \cdot wb_{\Theta}$ with $\rmh(u) = \Ad(k_u)H_{\phi}$, it follows with \eqref{eq_fnpm_u} that%
\begin{equation*}
  \SC_{\Theta,w}(u,x) = \fn^-_{\rmh(u)}\cdot x = \Ad(k_u)\fn^-_{\phi}\cdot x = (\rmd k_u)_{wb_{\Theta}}\left(\fn^-_{\phi}\cdot wb_{\Theta}\right).%
\end{equation*}
This shows that the dimension of $\SC_{\Theta,w}(u,x)$ is constantly equal to that of $\fn^-_{\phi}\cdot wb_{\Theta}$. For the other two subspaces the proof is analogous. To see that \eqref{eq_tangentspace_sum} is a direct sum, we show that the dimensions of the three subspaces exactly sum up to the dimension of the tangent space. By \eqref{eq_fthetatangentspace}, the dimension of the tangent space is $\dim\fn^-_{\Theta}$. The dimensions of $\SC_{\Theta,w}(u,x)$ and $\UC_{\Theta,w}(u,x)$, respectively, are the sums of the dimensions of the $\fg_{\alpha}$ with%
\begin{equation*}
  \alpha \in \Pi^{\pm}_{\phi,\Theta,w} = \Pi^{\pm}\backslash\langle\Theta(\phi)\rangle \cap w(\Pi^-\backslash\langle\Theta\rangle),%
\end{equation*}
since the tangent space at $wb_{\Theta}$ is $w\fn^-_{\Theta} \cdot wb_{\Theta} = (\rmd\widetilde{k})_{b_{\Theta}}\fn^-_{\Theta}\cdot b_{\Theta}$ with $w = \Ad(\widetilde{k})|_{\fa}$. The dimension of $\fz_{\rmh(u)}\cdot x$ is the sum of the dimensions of those $\fg_{\alpha}$ with%
\begin{equation*}
  \alpha \in \langle\Theta(\phi)\rangle \cap w(\Pi^-\backslash \langle\Theta\rangle).%
\end{equation*}
Consequently, the dimensions of the three subspaces sum up to%
\begin{equation*}
  \sum_{\alpha\in w(\Pi^-\backslash\langle\Theta\rangle)}\dim\fg_{\alpha} = \dim\fn^-_{\Theta} = \dim T_x\F_{\Theta}.%
\end{equation*}

\emph{Step 3.} We prove $\phi$-invariance. Using $\varphi_{t,u}(x) = \psi_{t,u} \cdot x$, with Lemma \ref{lem_rmhprops} it follows that%
\begin{eqnarray*}
  (\rmd\varphi_{t,u})_x\SC_{\Theta,w}(u,x) &=& (\rmd\psi_{t,u})_x\fn^-_{\rmh(u)}\cdot x = \left(\Ad(\psi_{t,u})\fn^-_{\rmh(u)}\right)\cdot\varphi_{t,u}(x)\\
  &=& \fn^-_{\Ad(\psi_{t,u})\rmh(u)}\cdot\varphi_{t,u}(x)\\
 &=& \fn^-_{\rmh(\theta_tu)}\cdot \varphi_{t,u}(x) = \SC_{\Theta,w}(\phi_t(u,x)).%
\end{eqnarray*}
For $\UC_{\Theta,w}(u,x)$ and $\CC_{\Theta,w}(u,x)$ the proof works analogously.%

\emph{Step 4.} Finally, we prove continuity of the subbundles. To this end, it suffices to show that the subbundles are closed (cf.~\cite[Lem.~B.1.13]{CKl}). W.l.o.g., consider only $\CC_{\Theta,w}$. Let $(u_n,v_n)\in\CC_{\Theta,w}(u_n,x_n)$ be a sequence converging to some $(u_*,v_*)\in \bigcup_{(u,x)\in\MC_{\Theta}(w)}\{u\}\tm T_x\F_{\Theta}$. We need to show that $v \in \CC_{\Theta,w}(u_*,x_*)$ with $x_* = \mathrm{pr}(v_*)$, $\mathrm{pr}:T\F_{\Theta}\rightarrow \F_{\Theta}$ being the base point projection. Write%
\begin{equation*}
  v_n = (\rmd k_n)_{wb_{\Theta}} X_n(wb_{\Theta}),\quad x_n = k_n \cdot wb_{\Theta},\quad X_n \in \fz_{\phi},\quad \rmh(u_n) = \Ad(k_n)H_{\phi}.%
\end{equation*}
We can decompose $X_n = Y_n \oplus Z_n \in w\fn^-_{\Theta} \oplus w\fp_{\Theta}$, and by $\ad(H_{\phi})$-invariance of $w\fn^-_{\Theta}$ and $w\fp_{\Theta}$ we find that $Y_n,Z_n \in \fz_{\phi}$. Hence, we may assume that $X_n\in w\fn^-_{\Theta}$. Using a $K$-invariant Riemannian metric on $\F_{\Theta}$ with the property \eqref{eq_kinvmetric}, we obtain with $w^{-1} = \Ad(k)|_{\fa}$ that%
\begin{equation*}
  |v_n| = |X_n(wb_{\Theta})| = |(\rmd k)_{wb_{\Theta}} X_n(wb_{\Theta})| = |\Ad(k)X_n(b_{\Theta})| = |X_n|,%
\end{equation*}
and therefore the sequence $(X_n)$ is bounded and we may assume that $X_n \rightarrow X_* \in \fz_{\phi}$. Finally, since $K$ is compact, we may assume that $k_n \rightarrow k_* \in K$. Putting everything together, we end up with%
\begin{equation*}
  v_* = (\rmd k_*)_{wb_{\Theta}} X_*(wb_{\Theta}),\quad x_* = k_* \cdot wb_{\Theta},\quad \rmh(u_*) = \Ad(k_*)H_{\phi},% 
\end{equation*}
where we use continuity of $\rmh$. Hence, $v_* \in \Ad(k_*)\fz_{\phi} \cdot x_* = \CC_{\Theta,w}(u_*,x_*)$, concluding the proof.%
\end{proof}

Now we consider the $\fa$-cocycle on $\UC\tm\F$ over $\phi$, given by%
\begin{equation*}
  \rma:\R\tm\UC\tm\F \rightarrow \fa,\quad (t,u,x) \mapsto \rma(t,u,x) := \log \rmA(\varphi_{t,u}(x)).%
\end{equation*}

\begin{proposition}\label{prop_hyperbolicconstants}
There exist constants $c,\mu>0$ such that for all $(u,x)\in\MC_{\Theta}(w)$ it holds that%
\begin{equation*}
  \|(\rmd\varphi_{t,u})_xv\| \leq c^{-1}\rme^{-\mu t}\|v\| \mbox{\quad for all\ } t\geq0,\ v\in\SC_{\Theta,w}(u,x)%
\end{equation*}
and%
\begin{equation*}
  \|(\rmd\varphi_{t,u})_xv\| \geq c\rme^{\mu t}\|v\| \mbox{\quad for all\ } t\geq0,\ v\in\UC_{\Theta,w}(u,x).%
\end{equation*}
\end{proposition}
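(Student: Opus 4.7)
The plan is to compute $(\rmd\varphi_{t,u})_x$ on vectors in $\SC_{\Theta,w}$ and $\UC_{\Theta,w}$ directly through the adjoint representation, to reduce it to the scalar action of the $A$-part of the Iwasawa decomposition of $\psi_{t,u}k_u$ on the root spaces contained in $\fn^\mp_\phi$, and then to invoke Proposition \ref{prop_cocycle_linearest} (and its $N^-$-counterpart) for the exponential rates.

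For the stable bound, I would fix $(u,x)\in\MC_\Theta(w)$, choose $(u,k_u)\in R_\phi$ with $x=k_u\cdot wb_\Theta$, and write an arbitrary $v\in\SC_{\Theta,w}(u,x)$ as $v=\Ad(k_u)Y\cdot x$ with $Y\in\fn^-_\phi$. Because the part of $Y$ lying in $\fn^-_\phi\cap w\fp_\Theta$ annihilates at $wb_\Theta$, I may normalise $Y$ to lie in $\fn^-_\phi\cap w\fn^-_\Theta=\sum_{\alpha\in\Pi^-_{\phi,\Theta,w}}\fg_\alpha$, and then the $K$-invariance of the Riemannian metric on $\F_\Theta$ together with property \eqref{eq_kinvmetric} yields $\|v\|=|Y|_\theta$. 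Next I invoke the Iwasawa decomposition $\psi_{t,u}k_u=k_ta_tn_t$ with $(\theta_tu,k_t)\in R_\phi$, $a_t\in A$, $n_t\in N^+(\phi)$, where the latter inclusion is forced by $\phi$-invariance of $Q_\phi=R_\phi\cdot AN^+(\phi)$. Since $N^+(\phi)$ centralizes $\fn^-_\phi$, one has $\Ad(n_t)Y=Y$, so
\begin{equation*}
  (\rmd\varphi_{t,u})_xv \;=\; \Ad(k_t)\,\Ad(a_t)Y\cdot\varphi_{t,u}(x).
\end{equation*}
Writing $\varphi_{t,u}(x)=k^*\cdot wb_\Theta$ with $(\theta_tu,k^*)\in R_\phi$ and using $K$-invariance once more (noting that $k^{*-1}k_t\in K_\phi$ acts by a $B_\theta$-isometry on $\fn^-_\phi$), the norm reduces to $|\Ad(a_t)Y|_\theta$ at most. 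Since $\Ad(a_t)$ acts on each $\fg_\alpha\subset\fn^-_\phi$ by the scalar $\rme^{\alpha(\log a_t)}$ with $\alpha\in\Pi^-\setminus\langle\Theta(\phi)\rangle$, Proposition \ref{prop_cocycle_linearest} applied at $\xi=k_u\cdot b_0\in\MC^+$ gives $\alpha(\log a_t)\leq-\mu t-B$ uniformly, proving the stable estimate with $c=\rme^B$.

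For the unstable bound, I would repeat the argument verbatim in the dual Iwasawa $G=KAN^-$, using that $Z_\phi=K_\phi AN^-(\phi)$ is also an Iwasawa decomposition of the reductive group $Z_\phi$ and that $N^-(\phi)$ centralizes $\fn^+_\phi$. Writing $\psi_{t,u}k_u=\tilde k_t\tilde a_t\tilde n_t$ with $\tilde n_t\in N^-(\phi)$, the same chain of reductions applied to $v=\Ad(k_u)Y\cdot x$ with $Y\in\fn^+_\phi\cap w\fn^-_\Theta$ leaves me with the task of establishing the lower bound $\alpha(\log\tilde a_t)\geq\mu t+B$ for every $\alpha\in\Pi^+\setminus\langle\Theta(\phi)\rangle$. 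This is the $N^-$-analog of Proposition \ref{prop_cocycle_linearest}, obtained by replaying its proof with the vector bundle $\VC^*_\phi=Q_\phi\times_{Z_\phi}\fn^+_\phi$ and the opposite base point $w_0b_{\Theta(\phi)}$ in place of $b_{\Theta(\phi)}$; the zero section of $\VC^*_\phi$ then corresponds to the repeller $\MC^-_{\Theta(\phi)}$, and the repelling nature of the induced linear flow forces the required expansion.

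The main obstacle is the unstable direction: in the original $N^+$-Iwasawa, $\Ad(n_t)$ is only unipotent (not trivial) on $\fn^+_\phi$, which would introduce a correction hard to control against the exponential expansion. Switching to the dual Iwasawa bypasses this by turning the unipotent correction into the trivial action of $N^-(\phi)$ on $\fn^+_\phi$, at the cost of needing the $N^-$-counterpart of Proposition \ref{prop_cocycle_linearest}. Once that is in hand, the only remaining care is to verify that the $K_\phi$-ambiguity in the Iwasawa projections acts isometrically on $\fn^\pm_\phi$, which follows from $K_\phi\subset Z_{H_\phi}$ preserving every $\ad(H_\phi)$-eigenspace and acting by $B_\theta$-isometries.
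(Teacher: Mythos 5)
Your stable-direction argument is sound and is essentially a streamlined version of what the paper does: you trade the paper's verification that the $A$- and $N^+(\phi)$-factors of the Iwasawa decomposition of $\psi_{t,u}k_u$ fix $wb_\Theta$ (which requires the careful choice of a representative $w^\phi$ in the double coset $\WC_{\Theta(\phi)}w\WC_\Theta$) for a one-sided estimate obtained via $B_\theta$-orthogonality of root spaces and $K$-invariance of the metric. Since only an upper bound on $\|(\rmd\varphi_{t,u})_x v\|$ is needed for $v\in\SC_{\Theta,w}(u,x)$, the ``at most'' you obtain from projecting $\Ad((k^*)^{-1}k_t)\Ad(a_t)Y$ onto $\fn^-_{\phi,w}$ suffices.

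The unstable direction, however, has a genuine gap, and it is precisely the feature that made the stable case painless that sinks you here. Running your chain of reductions on $\psi_{t,u}k_u=\tilde k_t\tilde a_t\tilde n_t$ with $\tilde n_t\in N^-(\phi)$ and $Y\in\fn^+_{\phi,w}=\fn^+_\phi\cap w\fn^-_\Theta$ yields, by the same orthogonality/$K_\phi$-isometry argument,
\begin{equation*}
  \|(\rmd\varphi_{t,u})_x v\|
  = \bigl|P_{\fn^+_{\phi,w}}\bigl(\Ad((k^*)^{-1}\tilde k_t)\Ad(\tilde a_t)Y\bigr)\bigr|_\theta
  \ \le\ |\Ad(\tilde a_t)Y|_\theta,
\end{equation*}
an \emph{upper} bound. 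Combining this with the (would-be) lower bound $\alpha(\log\tilde a_t)\ge\mu t+B$ gives two inequalities in incompatible directions; nothing forces $\|(\rmd\varphi_{t,u})_x v\|\ge c\,\rme^{\mu t}\|v\|$. The isometry of $K_\phi$ on $\fn^+_\phi$ that you invoke does not close this gap: $K_\phi$ preserves $\fn^+_\phi$ but not the subspace $\fn^+_{\phi,w}$ (it centralizes $H_\phi$, not $\fa$), so the conjugated vector can leak into $\fn^+_\phi\cap w\fp_\Theta$ and the projection can genuinely shrink it. The paper avoids this by doing two things you omit: it shows the $A$- and $N^\pm(\phi)$-factors actually fix $wb_\Theta$ after switching to the representative $w^\phi$ (so the inequality becomes an equality of operator norms), and -- more to the point -- it estimates $\varphi_{-t,u}$, not $\varphi_{t,u}$, in the unstable direction, obtaining $\|(\rmd\varphi_{-t,u})|_{\UC_{\Theta,w}(u,x)}\|\le c^{-1}\rme^{-\mu t}$ and then inverting via $\phi$-invariance of $\UC_{\Theta,w}$. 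That time-reversal step is exactly what turns an ``at most'' into the required ``at least''. Relatedly, the ``$N^-$-counterpart of Proposition~\ref{prop_cocycle_linearest}'' you propose to re-derive from a repeller bundle is unnecessary: once you work with $\varphi_{-t,u}$ in the $KAN^-$ Iwasawa setting, the quantity $\log\tilde a^*_t$ is $\rma^*(t,u,k_u\cdot b_0)$, and the identity \eqref{eq_timeinv_acoc} together with $\phi_{-t}(u,k_u\cdot b_0)\in\MC^+$ reduces everything to the already-proved Proposition~\ref{prop_cocycle_linearest}.
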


\begin{proof}
Let $k\in K$ such that $x = k\cdot wb_{\Theta}$ and $\rmh(u) = \Ad(k)H_{\phi}$, i.e., $(u,k)\in R_{\phi}$. From $\phi$-invariance of $Q_{\phi}$, using \eqref{eq_ztheta_iwasawa}, it follows that%
\begin{equation*}
  \varphi_{t,u}(k) = k_{t,u}a_{t,u}n_{t,u} \in KAN^+(\phi),\quad N^+(\phi) = N^+(\Theta(\phi)).%
\end{equation*}
Consider on $\F_{\Theta}$ a $K$-invariant Riemannian metric with the property \eqref{eq_kinvmetric}. Because of the relation between $k$ und $u$ it holds that%
\begin{equation*}
  \fn^-_{\rmh(u)} \cdot x = (\rmd k)_{wb_{\Theta}}\left(\fn^-_{\phi}\cdot wb_{\Theta}\right).%
\end{equation*}
Now let $v\in\SC_{\Theta,w}(u,x)$ and $\overline{v}\in\fn^-_{\phi}\cdot wb_{\Theta}$ such that $v = (\rmd k)_{wb_{\Theta}}\overline{v}$. Then%
\begin{eqnarray*}
  \left\|(\rmd\varphi_{t,u})_xv\right\| &=& \left\|(\rmd\psi_{t,u})_x v\right\| = \left\|(\rmd\psi_{t,u})_x(\rmd k)_{wb_{\Theta}}\overline{v}\right\|\\
                                        &=& \left\|(\rmd[\varphi_{t,u}(k)])_{wb_{\Theta}}\overline{v}\right\|\\
                                        &=& \left\|(\rmd k_{t,u})_{wb_{\Theta}}(\rmd a_{t,u})_{wb_{\Theta}}(\rmd n_{t,u})_{wb_{\Theta}}\overline{v}\right\|.%
\end{eqnarray*}
Here we use that both $a_{t,u}$ and $n_{t,u}$ fix $wb_{\Theta}$. For $a_{t,u}$ this is proved as follows: We need to show that $\Ad(k^{-1}a_{t,u}k)\fp_{\Theta} = \fp_{\Theta}$, where $w = \Ad(k)|_{\fa}$, $k\in M^*$. Since $\Ad(k)$ normalizes $\fa$, we have $a := k^{-1}a_{t,u}k \in A$ and hence $\Ad(a)\fp_{\Theta} = \fp_{\Theta}$, because $\fa \subset \fp_{\Theta}$. For $n_{t,u}$ the proof is more involved: We can write $w = w_1w^{\phi}$ with $w_1 \in \WC_{\Theta(\phi)}$ and $(w^{\phi})^{-1}(\Theta(\phi)) \subset \Pi^+$ (cf.~\cite[Lem.~1.1.2.15]{War}). We have $x = k \cdot wb_{\Theta} = k' \cdot w^{\phi}b_{\Theta}$ with $k' = kw_1$. From $w_1\in\WC_{\Theta(\phi)}$ it follows that $(u,k')\in R_{\phi}$, since $H_{\phi}\in\fa_{\Theta(\phi)}$ implies%
\begin{equation*}
  h_{\phi}(u,k') = w_1^{-1} \cdot h_{\phi}(u,k) = w_1^{-1} H_{\phi} = H_{\phi}.%
\end{equation*}
Since $(w^{\phi})^{-1}(\Theta(\phi)) \subset \Pi^+$, we have $(w^{\phi})^{-1}N^+(\phi)w^{\phi} \subset N^+$. Now we argue as follows: Instead of $w$ we can choose $w^{\phi}$ as the corresponding representative in the double coset $\WC_{\Theta(\phi)}w\WC_{\Theta}$. So we may assume that $\varphi_{t,u}(k') = k_{t,u}a_{t,u}n_{t,u}$. Then there is $\overline{n}_{t,u}\in N^+$ with%
\begin{equation*}
  n_{t,u} \cdot w^{\phi}b_{\Theta} = w^{\phi} \overline{n}_{t,u}b_{\Theta} = w^{\phi}b_{\Theta},%
\end{equation*}
since $N^+$ stabilizes $b_{\Theta} = \fp_{\Theta}$, which follows from the fact that the Lie subalgebra $\fp_{\Theta}$ contains $\fn^+$ and hence $\Ad(n)\fp_{\Theta} = \fp_{\Theta}$ for every $n\in N^+$.%

Since $N^+(\phi)$ centralizes $\fn^-_{\phi}$, we have that $(\rmd n_{t,u})_{wb_{\Theta}}\overline{v} = \overline{v}$. Together with the fact that the chosen metric is $K$-invariant, we obtain%
\begin{equation*}
  \left\|(\rmd\varphi_{t,u})_xv\right\| = \left\|(\rmd a_{t,u})_{wb_{\Theta}}\overline{v}\right\|.%
\end{equation*}
Because $(\rmd k)_{wb_{\Theta}}$ is an isometry, we conclude that%
\begin{equation*}
  \left\|(\rmd\varphi_{t,u})|_{\SC_{\Theta,w}(u,x)}\right\| = \left\|(\rmd a_{t,u})_{wb_{\Theta}}|_{\fn^-_{\phi}\cdot wb_{\Theta}}\right\| = \left\|\Ad(a_{t,u})|_{\fn^-_{\phi,w}}\right\|,%
\end{equation*}
where%
\begin{equation*}
  \fn^-_{\phi,w} = \sum_{\alpha\in\Pi^-_{\phi,\Theta,w}}\fg_{\alpha},\qquad \Pi^-_{\phi,\Theta,w} = (\Pi^-\backslash\langle\Theta(\phi)\rangle) \cap w(\Pi^-\backslash\langle\Theta\rangle).%
\end{equation*}
The operator $\Ad(a_{t,u})|_{\fn^-_{\phi,w}}$ is positive definite, so that $\|\Ad(a_{t,u})|_{\fn^-_{\phi,w}}\|$ equals the greatest eigenvalue. Since the eigenvalues are given by $\rme^{\alpha(\rma(t,u,k\cdot b_0))}$, $\alpha\in\Pi^-_{\phi,\Theta,w}$, with Proposition \ref{prop_cocycle_linearest} we obtain%
\begin{equation*}
  \left\|(\rmd\varphi_{t,u})_xv\right\| \leq c^{-1}\rme^{-\mu t}\|v\|,%
\end{equation*}
where $c = \rme^B$, since $(u,k\cdot b_0)$ is contained in the attractor component $\MC^+=\MC(1)$ of the flow on $\UC\tm\F$. This proves the first inequality.%

Now let $v\in\UC_{\Theta,w}(u,x)$. It holds that%
\begin{equation*}
  \fn^+_{\rmh(u)} \cdot x = (\rmd k)_{wb_{\Theta}}\left(\fn^+_{\phi} \cdot wb_{\Theta}\right).%
\end{equation*} 
Similarly as above we define the subspace%
\begin{equation*}
  \fn^+_{\phi,w} := \sum_{\alpha\in\Pi^+_{\phi,\Theta,w}}\fg_{\alpha}.%
\end{equation*}
Since for every $t\geq0$ we have%
\begin{equation*}
  \varphi_{-t,u}(k) = k^*_{t,u}a^*_{t,u}n^*_{t,u}%
\end{equation*}
with $a^*_{t,u}n^*_{t,u}\in AN^-(\phi)$, $k^*_{t,u}\in K$, and $\log a^*_{t,u} = \rma^*(t,u,k\cdot b_0)$, we obtain%
\begin{equation*}
  \left\|(\rmd\varphi_{-t,u})|_{\UC_{\Theta,w}(u,x)}\right\| = \left\|(\rmd a^*_{t,u})|_{\fn^+_{\phi} \cdot wb_{\Theta}}\right\| = \left\|\Ad(a^*_{t,u})|_{\fn^+_{\phi,w}}\right\|.%
\end{equation*}
Since $\Ad(a^*_{t,u})|_{\fn^+_{\phi,w}}$ is positive definite, $\|\Ad(a^*_{t,u})|_{\fn^+_{\phi,w}}\|$ equals the greatest eigenvalue. The eigenvalues are given by $\rme^{\alpha(\rma^*(t,u,k\cdot b_0))}$, $\alpha\in\Pi^+_{\phi,\Theta,w}$, and hence Proposition \ref{prop_cocycle_linearest} together with \eqref{eq_timeinv_acoc} gives%
\begin{equation*}
  \left\|(\rmd\varphi_{-t,u})_xv\right\| \leq c^{-1}\rme^{-\mu t}\|v\| \mbox{\quad for all\ } t\geq0,\ (u,x)\in\MC_{\Theta}(w),%
\end{equation*}
and because of the $\phi$-invariance of $\UC_{\Theta,w}$ it follows that $\|(\rmd\varphi_{t,u})_xv\| \geq c\rme^{\mu t}\|v\|$, as claimed.%
\end{proof}

Now we can state the main result of the paper, the characterization of the hyperbolic chain control sets on $\F_{\Theta}$.%

\begin{theorem}
Given $\Theta\subset\Sigma$ and $w\in\WC$, the following are equivalent:%
\begin{enumerate}
\item[(i)] The subbundle $\CC_{\Theta,w}$ is trivial.%
\item[(ii)] The chain control set $E_{\Theta,w}$ is uniformly hyperbolic.%
\item[(iii)] $\langle\Theta(\phi)\rangle \subset w\langle\Theta\rangle$.%
\end{enumerate}
\end{theorem}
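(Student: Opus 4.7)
My plan is to prove (i) $\Leftrightarrow$ (iii) by a root-theoretic dimension count, (i) $\Rightarrow$ (ii) as an immediate corollary of earlier propositions, and (ii) $\Rightarrow$ (i) by comparing the canonical decomposition of Proposition \ref{prop_dec} with the hypothetical hyperbolic splitting.

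For (i) $\Leftrightarrow$ (iii): Step 2 of the proof of Proposition \ref{prop_dec} gives
\[
  \dim\CC_{\Theta,w}(u,x) = \sum_{\alpha\in\langle\Theta(\phi)\rangle\cap w(\Pi^-\setminus\langle\Theta\rangle)}\dim\fg_\alpha,
\]
so (i) is equivalent to emptiness of this indexing set. If (iii) holds, then any $\alpha\in\langle\Theta(\phi)\rangle$ satisfies $w^{-1}\alpha\in\langle\Theta\rangle$, hence $w^{-1}\alpha\notin\Pi^-\setminus\langle\Theta\rangle$. Conversely, assume the intersection is empty; since $\langle\Theta(\phi)\rangle$ is stable under negation, applying the hypothesis to both $\alpha$ and $-\alpha$ yields $w^{-1}\alpha\in(\Pi^+\cup\langle\Theta\rangle)\cap(\Pi^-\cup\langle\Theta\rangle)=\langle\Theta\rangle$, giving (iii). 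The implication (i) $\Rightarrow$ (ii) is then immediate: when $\CC_{\Theta,w}$ vanishes, Proposition \ref{prop_dec} supplies the continuous $\phi$-invariant decomposition $T_x\F_\Theta=\SC_{\Theta,w}(u,x)\oplus\UC_{\Theta,w}(u,x)$, while Proposition \ref{prop_hyperbolicconstants} supplies the uniform estimates of (H1)--(H3).

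The crux is (ii) $\Rightarrow$ (i). Assuming the hyperbolic splitting $T_xM=E^-_{u,x}\oplus E^+_{u,x}$, I would first show $\SC_{\Theta,w}\subset E^-$ and $\UC_{\Theta,w}\subset E^+$: for $v\in\SC_{\Theta,w}$ with decomposition $v=v^-+v^+$, the exponential decay of $\|(\rmd\varphi_{t,u})_xv\|$ from Proposition \ref{prop_hyperbolicconstants} is incompatible with the exponential growth of $\|(\rmd\varphi_{t,u})_xv^+\|$ demanded by (H2) unless $v^+=0$, and symmetrically for $\UC_{\Theta,w}$. It then remains to preclude nonzero $v\in\CC_{\Theta,w}$, which I would accomplish by establishing that $\|(\rmd\varphi_{t,u})_xv\|$ stays uniformly bounded above and below as $t$ ranges over $\R$. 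By Proposition \ref{prop_centerbundle}, $v$ is tangent to the compact homogeneous submanifold $\fix_\Theta(\rmh(u),w)=K_{\rmh(u)}\cdot wb_\Theta$. Carrying the Iwasawa computation of Proposition \ref{prop_hyperbolicconstants} over to the central block $\fz_{\phi,w}:=\fz_\phi\cap w\fn^-_\Theta$, the linearized flow on $\CC_{\Theta,w}$ is governed by $\Ad(a_{t,u}n_{t,u})$ acting on root spaces $\fg_\alpha$ with $\alpha\in\langle\Theta(\phi)\rangle$. Since every such $\alpha$ vanishes on $H_\phi\in\fa_{\Theta(\phi)}$, the linear-in-$t$ scaling from Proposition \ref{prop_cocycle_linearest} contributes nothing; only the $\fa(\Theta(\phi))$-component of the $\fa$-cocycle enters, and this component is subexponential because the effective action of $Z_\phi$ on the compact orbit $\fix_\Theta(H_\phi,w)$ factors through the compact subgroup $K_\phi$ modulo its stabilizer of $wb_\Theta$. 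A bounded-orbit vector $v$ decomposed in the hyperbolic splitting as $v^-+v^+$ must therefore satisfy $v^-=v^+=0$, contradicting $v\neq 0$, and hence $\CC_{\Theta,w}$ is trivial.

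The main obstacle is rigorously justifying the uniform boundedness of $(\rmd\varphi_{t,u})_x|_{\CC_{\Theta,w}}$ in both time directions; this will require combining the block-reduction structure from the setup of Proposition \ref{prop_dec} with the fact that the $AN^+(\phi)$-factor of $Z_\phi$ stabilizes the appropriate representative of $wb_\Theta$ in its $\WC_{\Theta(\phi)} w \WC_\Theta$-coset, leaving only the compact factor $K_\phi$ to produce genuine motion on the center.
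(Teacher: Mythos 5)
Your treatment of (i) $\Leftrightarrow$ (iii) matches the paper exactly (dimension count from Proposition \ref{prop_dec} plus the negation-stability of $\langle\Theta(\phi)\rangle$), and (i) $\Rightarrow$ (ii) follows the same route via Propositions \ref{prop_dec} and \ref{prop_hyperbolicconstants}. The divergence is in (ii) $\Rightarrow$ (i), where your plan contains a genuine gap.

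You want to show that $(\rmd\varphi_{t,u})|_{\CC_{\Theta,w}}$ grows subexponentially in both time directions and then conclude that a hyperbolically decomposed $v=v^-+v^+\in\CC_{\Theta,w}$ must vanish. Your justification for the subexponential bound is that ``the effective action of $Z_\phi$ on $\fix_\Theta(H_\phi,w)$ factors through the compact factor $K_\phi$, with $AN^+(\phi)$ stabilizing the base point,'' but this conflates the behavior of the flow on points with the behavior of its derivative on tangent vectors. Precisely as in the computation of Proposition \ref{prop_hyperbolicconstants}, one writes $\varphi_{t,u}(k)=k_{t,u}a_{t,u}n_{t,u}\in K_\phi AN^+(\phi)$; while $a_{t,u}n_{t,u}$ does fix the (suitably chosen) representative $wb_\Theta$, the derivative on the central root spaces is governed by $\Ad(a_{t,u})$, whose eigenvalues are $\rme^{\alpha(\log a_{t,u})}$ for $\alpha\in\langle\Theta(\phi)\rangle$. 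These $\alpha$ do not vanish on the $\fa(\Theta(\phi))$-component of $\log a_{t,u}=\rma(t,u,\cdot)$, and nothing in the cited propositions controls that component; Proposition \ref{prop_cocycle_linearest} only addresses the roots outside $\langle\Theta(\phi)\rangle$. The claim that the $\fa(\Theta(\phi))$-part of the $\fa$-cocycle is subexponential over the Morse component is in the spirit of the Lyapunov/Morse spectral theory of \cite{ASM}, but it is a nontrivial input that your compactness argument does not supply. You yourself flag this as the ``main obstacle,'' and I agree that it is an unfilled obstacle rather than a detail.

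The paper sidesteps this entirely: if $\CC_{\Theta,w}$ is nontrivial, Proposition \ref{prop_centerbundle} gives a positive-dimensional fixed-point submanifold $\fix_\Theta(\rmh(u),w)$ for any $u$, and taking $u$ constant makes it invariant under the autonomous flow $\varphi_{t,u}$. Either every point of the submanifold is an equilibrium, in which case the derivative along its tangent space is the identity and gives zero Lyapunov exponents, or there is a nonstationary orbit whose tangent direction has Lyapunov exponent zero. Either way (H2) fails, so $E_{\Theta,w}$ is not hyperbolic. I would recommend replacing your step 2 with this argument; it is both shorter and logically self-contained.
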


\begin{proof}
According to the proof of Proposition \ref{prop_dec}, the dimension of $\CC_{\Theta,w}(u,x)$ is the sum of the dimensions of those $\fg_{\alpha}$ with $\alpha\in\langle\Theta(\phi)\rangle\cap w(\Pi^-\backslash\langle\Theta\rangle)$. It is clear that $\CC_{\Theta,w}$ is trivial iff this intersection is empty. Obviously, $\langle\Theta(\phi)\rangle \subset w\langle\Theta\rangle$ implies that the intersection is empty. Conversely, if it is empty, then%
\begin{equation*}
  \langle\Theta(\phi)\rangle \subset w(\Pi^-\backslash\langle\Theta\rangle)^c = w\Pi^+ \cup w\langle\Theta\rangle,%
\end{equation*}
or equivalently $w^{-1}\langle \Theta(\phi)\rangle \subset \Pi^+ \cup \langle\Theta\rangle$. If $\alpha\in w^{-1}\langle\Theta(\phi)\rangle$, then also $-\alpha \in w^{-1}\langle\Theta(\phi)\rangle$. But $\alpha$ and $-\alpha$ cannot be both in $\Pi^+$, so one of them (and hence both) are in $\langle\Theta\rangle$, which implies that (i) and (iii) are equivalent. From Proposition \ref{prop_hyperbolicconstants} it follows that (i) implies (ii). To finish the proof, it suffices to prove that (ii) implies (i). To this end, assume that (i) does not hold. Then, by Proposition \ref{prop_centerbundle}, the sets $\fix_{\Theta}(\rmh(u),w)$ are submanifolds of dimension at least $1$. Take a constant control function $u$, and let $k_u\in K$ with $\rmh(u)=\Ad(k_u)H_{\phi}$. By Lemma \ref{lem_rmhprops}, we have%
\begin{equation*}
  \fix_{\Theta}(\rmh(u),w) = \fix_{\Theta}(\rmh(\theta_tu),w) =
  \fix_{\Theta}(\Ad(\psi_{t,u})\rmh(u),w) = \varphi_{t,u}(\fix_{\Theta}(\rmh(u),w)).%
\end{equation*}
Hence, $\fix_{\Theta}(\rmh(u),w)$ is invariant under the flow $(\varphi_{t,u})_{t\in\R}$. Then there are two possibilities: (a) all elements of $\fix_{\Theta}(\rmh(u),w)$ are equilibria, in which case the flow is trivial on this set, or (b) there exists at least one nonstationary trajectory. In both cases we have at least one Lyapunov exponent that is zero (in case (b) the Lyapunov exponent in direction of the trajectory vanishes), and hence hyperbolicity cannot hold. %
\end{proof}

\begin{remark}
Note that the hyperbolicity condition is independent of the representative $w\in\WC$. Indeed, if $w' = w_1ww_2$ with $w_1\in\WC_{\Theta(\phi)}$, $w_2\in\WC_{\Theta}$, and $\langle\Theta(\phi)\rangle\subset w\langle\Theta\rangle$, then $\langle\Theta(\phi)\rangle = w_1 \langle\Theta(\phi)\rangle \subset w_1 w\langle\Theta\rangle =  w_1 w\langle w_2\Theta \rangle =  w'\langle\Theta\rangle$.%
\end{remark}

\begin{remark}
One particularly interesting case is the case when $H_{\phi}$ is a regular element, i.e., $H_{\phi} \in \fa^+$. Then $\Theta(\phi) = \emptyset$ and hence all chain control sets on all flag manifolds are hyperbolic.%
\end{remark}

According to a general result in Colonius and Du \cite{CDu}, under appropriate regularity assumptions, a hyperbolic chain control set of a control-affine system is the closure of a control set. We want to give an independent proof for this fact for the hyperbolic chain control sets on $\F_{\Theta}$. If $\langle\Theta(\phi)\rangle \subset w\langle\Theta\rangle$, then the fixed point component $\fix_{\Theta}(\rmh(u),w)$ reduces to a point $x(u)$ for every $u\in\UC$. Hence, we can define the map%
\begin{equation*}
  \sigma_{\Theta,w}:\UC \rightarrow \MC_{\Theta}(w),\quad u \mapsto (u,x(u)),%
\end{equation*}
and we have the following result.%

\begin{proposition}\label{prop_conjugacy}
The map $\sigma_{\Theta,w}$ is a topological conjugacy between the shift flow on $\UC$ and the control flow restricted to $\MC_{\Theta}(w)$.%
\end{proposition}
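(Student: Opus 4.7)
The plan is to verify three things in order: (a) that the map $\sigma_{\Theta,w}$ is well-defined, (b) that it is a homeomorphism, and (c) that it intertwines the flows.

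First I would note that, under the assumption $\langle\Theta(\phi)\rangle \subset w\langle\Theta\rangle$, the main theorem just proved gives triviality of the bundle $\CC_{\Theta,w}$. By Proposition \ref{prop_centerbundle}, the tangent space of the fixed point component $\fix_{\Theta}(\rmh(u),w)$ at any of its points is precisely $\CC_{\Theta,w}(u,x)$, so the submanifold $\fix_{\Theta}(\rmh(u),w)$ has dimension zero. Since this submanifold is connected (as recalled in Section \ref{sec_flagbundleflows}), it reduces to a single point $x(u)$, and $\sigma_{\Theta,w}(u) = (u,x(u))$ is unambiguously defined.

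Next, to show that $\sigma_{\Theta,w}$ is a homeomorphism onto $\MC_{\Theta}(w)$, I would argue that it is the set-theoretic inverse of the canonical projection $\pi_{\UC}: \MC_{\Theta}(w) \to \UC$, $(u,x) \mapsto u$. This projection is continuous and, by \eqref{eq_morsecomponents_seconddescr} together with the fact that $R_{\phi} \to \UC$ is a principal $K_{\phi}$-subbundle (hence surjective), it is also surjective. Injectivity is immediate from step one, since each fiber is the singleton $\{x(u)\}$. Now $\MC_{\Theta}(w)$ is compact (it is a Morse set of $\phi$ on the compact space $\UC \tm \F_{\Theta}$) and $\UC$ is Hausdorff, so a continuous bijection between them is automatically a homeomorphism; this forces $\sigma_{\Theta,w} = \pi_{\UC}^{-1}$ to be continuous.

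For equivariance, I would use $\phi$-invariance of the Morse set: for every $u\in\UC$ and $t\in\R$,
\begin{equation*}
  \phi_t(\sigma_{\Theta,w}(u)) = \phi_t(u,x(u)) = (\theta_tu,\varphi(t,x(u),u)) \in \MC_{\Theta}(w),
\end{equation*}
so $\varphi(t,x(u),u)$ belongs to the fiber of $\MC_{\Theta}(w)$ over $\theta_tu$, which is the singleton $\{x(\theta_tu)\}$. Hence $\varphi(t,x(u),u) = x(\theta_tu)$, giving $\phi_t \circ \sigma_{\Theta,w} = \sigma_{\Theta,w} \circ \theta_t$, as required. (Alternatively, one can verify this directly from Lemma \ref{lem_rmhprops}(ii) via the identity $\varphi_{t,u}(\fix_{\Theta}(\rmh(u),w)) = \fix_{\Theta}(\rmh(\theta_tu),w)$, but the invariance argument is more economical.)

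There is no serious obstacle here; the only non-routine input is the reduction to singleton fibers, which has already been done by invoking the main theorem and Proposition \ref{prop_centerbundle}. Everything else is the standard ``continuous bijection from compact to Hausdorff is a homeomorphism'' argument combined with the invariance of Morse sets under $\phi$.
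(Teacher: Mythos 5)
Your proof is correct and follows the same overall structure as the paper's: bijectivity via the projection inverse, the compact-to-Hausdorff argument for continuity, and then the conjugacy identity. The only real divergence is in the equivariance step: the paper computes $\sigma_{\Theta,w}(\theta_t u)$ explicitly using Lemma \ref{lem_rmhprops}(ii) to show $\fix_\Theta(\rmh(\theta_t u),w) = \psi_{t,u}\cdot\fix_\Theta(\rmh(u),w)$, whereas you deduce it more abstractly from $\phi$-invariance of the Morse set plus the singleton-fiber structure; both are valid, and your version is marginally shorter but leans on the global Morse-set machinery rather than the local cocycle identity. You also supply the reduction to singleton fibers as part of the proof, whereas the paper states this in the text immediately preceding the proposition (as a consequence of the main theorem and Proposition \ref{prop_centerbundle}, exactly the reasoning you give), so that step is correct but technically redundant here.
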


\begin{proof}
Obviously, the projection from $\MC_{\Theta}(w)$ to $\UC$, i.e., the map $(u,x) \mapsto u$, is an inverse of $\sigma_{\Theta,w}$. Hence, $\sigma_{\Theta,w}$ is bijective. A bijective map between compact Hausdorff spaces is a homeomorphism iff the map or its inverse is continuous. In this case, the inverse is obviously continuous, so $\sigma_{\Theta,w}$ is a homeomorphism. Using Lemma \ref{lem_rmhprops}, the conjugacy identity is proved by%
\begin{eqnarray*}
  \sigma_{\Theta,w}(\theta_tu) &=& (\theta_tu,\fix_{\Theta}(\rmh(\theta_tu),w))\\
  &=& (\theta_tu,\fix_{\Theta}(\Ad(\psi_{t,u})\rmh(u),w))\\
  &=& (\theta_tu,\psi_{t,u}\cdot\fix_{\Theta}(\rmh(u),w)) = \phi_t(\sigma_{\Theta,w}(u)),%
\end{eqnarray*}
which finishes the proof.%
\end{proof}

This easily implies that the chain control set $E_{\Theta,w}$ is the closure of a control set if it has nonempty interior and local accessibility holds.%

\begin{corollary}\label{cor_closure}
Assume that $\langle\Theta(\phi)\rangle \subset w\langle\Theta\rangle$ for some $\Theta\subset\Sigma$ and $w\in\WC$, and that the system on $\F_{\Theta}$ is locally accessible. Then, if $E_{\Theta,w}$ has a nonempty interior, it is the closure of a control set.%
\end{corollary}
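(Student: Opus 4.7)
The plan is to produce a control set $D \subset E_{\Theta,w}$ with $\cl D = E_{\Theta,w}$ by combining Proposition~\ref{prop_conjugacy} with local accessibility.

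First, by Proposition~\ref{prop_conjugacy} the control flow $\phi|_{\MC_{\Theta}(w)}$ is topologically conjugate, via $\sigma_{\Theta,w}$, to the shift flow $\theta$ on $\UC$, which is chain transitive (Section~\ref{subsec_cas}). The conjugacy transports chain transitivity to $\MC_{\Theta}(w)$, and projection onto $\F_{\Theta}$ yields the following: for every $x, y \in E_{\Theta,w}$ and every $\ep, \tau > 0$ there exists a controlled $(\ep, \tau)$-chain from $x$ to $y$ whose intermediate points all lie in $E_{\Theta,w}$.

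The core of the argument is to upgrade such chains to true trajectories. Fix $x_0 \in \inner E_{\Theta,w}$; given $y \in E_{\Theta,w}$ and $\delta > 0$, take a sufficiently fine controlled $(\ep, \tau)$-chain $x_0 = z_0, z_1, \ldots, z_n = y$ from the previous step. Local accessibility ensures that each reachable set $\OC^{\pm}_{\leq \sigma}(z)$ has nonempty interior for all $\sigma > 0$ and $z \in \F_{\Theta}$, and continuity of the transition map in the state allows one to perturb each control piece $u_i$ on a short subinterval so that its endpoint lands within $\delta/n$ of $z_{i+1}$, thereby absorbing the chain jump. Concatenating these perturbed controls produces an admissible $u^* \in \UC$ and $t^* > 0$ with $d(\varphi(t^*, x_0, u^*), y) < \delta$, hence $y \in \cl \OC^+(x_0)$.

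With approximate reachability $E_{\Theta,w} \subset \cl \OC^+(x_0)$ established for every $x_0 \in \inner E_{\Theta,w}$, a standard extension argument yields a unique control set $D^* \subset E_{\Theta,w}$ with $\inner E_{\Theta,w} \subset D^*$, so $\cl D^* \subset E_{\Theta,w}$ is automatic. The reverse inclusion reduces to the density of $\inner E_{\Theta,w}$ in $E_{\Theta,w}$, which I would derive from the hyperbolic structure of Proposition~\ref{prop_hyperbolicconstants}: boundary points of $E_{\Theta,w}$ lie on stable and unstable manifolds emanating from interior points, so they are approximated by them. The principal technical obstacle is the jump-closing step in the middle paragraph, which requires a delicate quantitative interplay between local accessibility and the chain parameters $\ep, \tau$; in the uniformly hyperbolic setting, a cleaner alternative is to invoke a shadowing-type theorem tailored to the control flow on $\MC_{\Theta}(w)$ --- made possible by Proposition~\ref{prop_hyperbolicconstants} --- which turns any sufficiently fine controlled pseudo-orbit into a genuine trajectory.
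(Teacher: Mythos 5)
The paper's proof is shorter and rests on a strictly stronger dynamical property than the one you invoke: it uses that the shift flow $\theta$ on $\UC$ is \emph{topologically mixing} (not merely chain transitive), transports this through the conjugacy $\sigma_{\Theta,w}$ of Proposition~\ref{prop_conjugacy} to conclude that $\phi|_{\MC_{\Theta}(w)}$ is topologically mixing, and then applies a ready-made theorem from Colonius--Kliemann (\cite[Thm.~4.1.3]{CKl}) which says exactly that, under local accessibility and nonempty interior, topological mixing of the lifted flow forces the chain control set to be the closure of a control set.

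The central step of your argument does not work as written. You start from chain transitivity of $\MC_{\Theta}(w)$, obtain controlled $(\ep,\tau)$-chains inside $E_{\Theta,w}$, and then claim that local accessibility plus continuity of the transition map lets you ``perturb each control piece $u_i$ on a short subinterval so that its endpoint lands within $\delta/n$ of $z_{i+1}$, thereby absorbing the chain jump.'' This is precisely the step that fails in general: local accessibility only guarantees that $\OC^{\pm}_{\leq\sigma}(z)$ has \emph{nonempty interior}, not that it contains a neighborhood of $z$ (or of a point $\ep$-close to $z$), so there is no reason a small perturbation of the control can steer the endpoint $\varphi(t_i,z_i,u_i)$ onto (or arbitrarily near) $z_{i+1}$. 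If chain transitivity of the lift plus local accessibility sufficed, then \emph{every} chain control set with nonempty interior would be the closure of a control set, which is false---that is exactly why hyperbolicity (or, in the paper's route, topological mixing of the lift) is needed. The paper avoids this problem entirely: topological mixing produces genuine trajectories between open sets by definition, with no jump-closing required.

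Your closing remark---that one could instead use the hyperbolic structure of Proposition~\ref{prop_hyperbolicconstants} and a shadowing-type theorem to promote pseudo-orbits to true orbits---is in fact the approach of Colonius and Du \cite{CDu}, and would give a correct proof if carried out; but it is a different route from the paper's and you do not develop it. The secondary claim that $\inner E_{\Theta,w}$ is dense in $E_{\Theta,w}$ because boundary points ``lie on stable and unstable manifolds emanating from interior points'' is plausible in spirit but also left unjustified; the paper's citation of \cite[Thm.~4.1.3]{CKl} subsumes this without needing a separate density argument. In summary: the mixing-based argument is the paper's; your primary argument has a genuine gap at the jump-closing step; your shadowing alternative is viable but unelaborated.
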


\begin{proof}
We use that the shift flow is topologically mixing (see \cite[Prop.~4.1.1]{CKl}). Then Proposition \ref{prop_conjugacy} implies that also the control flow restricted to $\MC_{\Theta}(w)$ is topologically mixing. Intuitively, this means that the dynamics on $\MC_{\Theta}(w)$ is indecomposable, and projecting to $\F_{\Theta}$ it means that approximate controllability holds on the interior of $E_{\Theta,w}$. This is made precise in \cite[Thm.~4.1.3]{CKl}, which immediately implies the assertion.%
\end{proof}

\begin{remark}
A sufficient condition for local accessibility on $\F_{\Theta}$ (for an arbitrary set $\Theta$) is that the semigroup of the right-invariant system on $G$ has nonempty interior, i.e., the positive orbit $S = \OC^+(1)$ of the neutral element. In fact, $\inner S \neq \emptyset$ is equivalent to the Lie algebra rank condition (and hence local accessibility) on $G$, which implies local accessibility on all flag manifolds, since the trajectories on $\F_{\Theta}$ are just the projections of those on $G$, and the projection map is open. The equivalence is seen as follows: If the Lie algebra rank condition on $G$ holds, then Krener's theorem implies that $\inner S \neq \emptyset$. Conversely, if the Lie algebra rank condition does not hold, then the Lie subalgebra $\LC \subset \fg$ generated by the right-invariant vector fields $f_0,f_1,\ldots,f_m$ has dimension smaller than $\dim\fg$. Since we may assume that the vector fields are analytic, $\LC$ is the tangent space to the full orbit $\OC(1)$ at $1$ and hence $\inner \OC(1) = \emptyset$ implying $\inner S = \emptyset$ (since $S\subset\OC(1)$).%
\end{remark}

\begin{remark}
We note that under the assumption that $\inner S \neq \emptyset$, there also exists an algebraic description of the control sets on $\F_{\Theta}$ (see San Martin and Tonelli \cite{STo}). In this case, the control sets $D_{\Theta}(w)$ on $\F_{\Theta}$, just as the chain control sets, are parametrized by the elements of a double coset space $\WC_{\Theta(S)}\backslash\WC/\WC_{\Theta}$, where $\Theta(S)$ is the flag type of the semigroup $S$. Using this interpretation, an alternative proof of Corollary \ref{cor_closure} is possible. The idea is that for any $u\in\UC$ there are periodic functions $u_n\in\UC$ such that $\psi_{\tau_n, u_n}^{M_n}$ are regular elements in $\inner S$ for integers $M_n$ and such that their periodic points $x_n=\fix_{\Theta}(\rmh(u_n),w)$ converge to $x=\fix_{\Theta}(\rmh(u),w)$. Since the set of periodic points of regular elements in $\inner S$ is dense in the control set $D_{\Theta}(w)$ (see \cite{STo}) and $D_{\Theta}(w)\subset E_{\Theta,w}$, we get $E_{\Theta, w}=\cl D_{\Theta}(w)$.%
\end{remark}

For future purposes we describe (in the hyperbolic case) the unstable and stable determinants in terms of the cocycles $\rma^{\pm}_{\Theta,w}$.%

\begin{proposition}
Assume that $\langle\Theta(\phi)\rangle \subset w\langle\Theta\rangle$. Then for all $(u,x)\in\MC_{\Theta}(w)$ and $t\geq0$ it holds that%
\begin{eqnarray*}
  \left|\det(\rmd\varphi_{t,u})|_{\UC_{\Theta,w}(u,x)}\right| &=& \rme^{\rma^+_{\Theta,w}(t,u,x)},\\
  \left|\det(\rmd\varphi_{t,u})|_{\SC_{\Theta,w}(u,x)}\right| &=& \rme^{\rma^-_{\Theta,w}(t,u,x)}.%
\end{eqnarray*}
\end{proposition}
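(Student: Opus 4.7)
The plan is to adapt the proof of Proposition \ref{prop_hyperbolicconstants} with the operator norm replaced by the absolute determinant, using $K$-invariance of the Riemannian metric together with the root-space decomposition to evaluate it exactly. Write $x = k\cdot wb_{\Theta}$ with $(u,k)\in R_{\phi}$, so that $\rmh(u)=\Ad(k)H_{\phi}$, and perform the Iwasawa decomposition $\varphi_{t,u}(k) = k_{t,u}a_{t,u}n_{t,u} \in K_{\phi}AN^+(\phi)$ inside $Z_{\phi}$. Since $a_{t,u}n_{t,u}$ centralizes $H_{\phi}$, Lemma \ref{lem_rmhprops} gives $(\theta_tu,k_{t,u})\in R_{\phi}$, so that $\SC_{\Theta,w}(\phi_t(u,x)) = (\rmd k_{t,u})_{wb_{\Theta}}(\fn^-_{\phi}\cdot wb_{\Theta})$.

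Next I would invoke three properties already established in the proof of Proposition \ref{prop_hyperbolicconstants}: the differentials $(\rmd k)_{wb_{\Theta}}$ and $(\rmd k_{t,u})_{wb_{\Theta}}$ are isometries of the $K$-invariant metric, contributing factor $1$ to any absolute determinant; $(\rmd n_{t,u})_{wb_{\Theta}}$ acts as the identity on $\fn^-_{\phi}\cdot wb_{\Theta}$, since $N^+(\phi)$ centralizes $\fn^-_{\phi}$ and fixes $wb_{\Theta}$; and $a_{t,u}$ fixes $wb_{\Theta}$ while normalizing $\fn^-_{\phi}$. Combining these reduces the computation to
\begin{equation*}
  \left|\det(\rmd\varphi_{t,u})_x|_{\SC_{\Theta,w}(u,x)}\right| = \left|\det(\rmd a_{t,u})_{wb_{\Theta}}|_{\fn^-_{\phi}\cdot wb_{\Theta}}\right|.
\end{equation*}
I then identify $\fn^-_{\phi}\cdot wb_{\Theta}$ linearly with $\fn^-_{\phi,w} = \sum_{\alpha\in\Pi^-_{\phi,\Theta,w}}\fg_{\alpha}$ via the evaluation map $Y\mapsto Y(wb_{\Theta})$; this is a linear isomorphism, since the kernel of the full evaluation $\fn^-_{\phi}\to T_{wb_{\Theta}}\F_{\Theta}$ consists exactly of those root spaces $\fg_{\alpha}$ with $\alpha\in\Pi^-\setminus\langle\Theta(\phi)\rangle$ and $\alpha\notin w(\Pi^-\setminus\langle\Theta\rangle)$. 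Under this identification $(\rmd a_{t,u})_{wb_{\Theta}}$ is conjugate to $\Ad(a_{t,u})|_{\fn^-_{\phi,w}}$, which is diagonal in the root-space decomposition with eigenvalues $\rme^{\alpha(\log a_{t,u})}$ of multiplicity $n_\alpha$. Since $\log a_{t,u}=\rma(t,u,k\cdot b_0)$, the absolute determinant equals
\begin{equation*}
  \prod_{\alpha\in\Pi^-_{\phi,\Theta,w}}\rme^{n_\alpha\alpha(\rma(t,u,k\cdot b_0))} = \rme^{\sigma^-_{\Theta,w}(\rma(t,u,k\cdot b_0))} = \rme^{\rma^-_{\Theta,w}(t,u,x)},
\end{equation*}
establishing the stable case.

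For the unstable subspace, the argument is parallel: I would use the $K_{\phi}AN^-(\phi)$-Iwasawa decomposition $\varphi_{-t,u}(k) = k^*_{t,u}a^*_{t,u}n^*_{t,u}$ from the second half of the proof of Proposition \ref{prop_hyperbolicconstants}, exploiting that $N^-(\phi)$ centralizes $\fn^+_{\phi}$ so that the $N^-(\phi)$-factor acts trivially on $\fn^+_{\phi}\cdot wb_{\Theta}$. The analogous computation gives the absolute determinant of $(\rmd\varphi_{-t,u})$ restricted to $\UC_{\Theta,w}(u,x)$ as $\rme^{\sigma^+_{\Theta,w}(\rma^*(t,u,k\cdot b_0))}$, and inverting via the multiplicative cocycle identity $\left|\det(\rmd\varphi_{t,u})_x|_{\UC_{\Theta,w}(u,x)}\right|\cdot\left|\det(\rmd\varphi_{-t,\theta_tu})_{\varphi_{t,u}(x)}|_{\UC_{\Theta,w}(\phi_t(u,x))}\right| = 1$ together with \eqref{eq_timeinv_acoc} yields $\rme^{\rma^+_{\Theta,w}(t,u,x)}$. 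The only real technical obstacle is the linear-algebraic identification of the restriction of $(\rmd a_{t,u})_{wb_{\Theta}}$ with $\Ad(a_{t,u})|_{\fn^{\pm}_{\phi,w}}$; once that is in hand, the determinant is a product over roots that, by the very definitions of $\sigma^{\pm}_{\Theta,w}$ and the corollary establishing $\rma^{\pm}_{\Theta,w}$, collapses exactly to the claimed exponential.
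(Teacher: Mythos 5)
Your proposal is correct and follows the same overall strategy as the paper's proof: use the $K$-invariant metric and the Iwasawa decomposition $\varphi_{t,u}(k)=k_{t,u}a_{t,u}n_{t,u}$ to strip off the isometric $K$-factor and the nilpotent factor, reducing the determinant to that of $\Ad(a_{t,u})$ on $\fn^{\pm}_{\phi,w}$, and then read off the product of eigenvalues $\rme^{n_\alpha\alpha(\rma(t,u,k\cdot b_0))}$.

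The one place you diverge from the paper is the unstable direction. The paper computes $\left|\det(\rmd\varphi_{t,u})|_{\UC_{\Theta,w}(u,x)}\right|$ directly with the forward-time Iwasawa decomposition, noting that $(\rmd n_{t,u})|_{\fn^+_{\phi}\cdot wb_{\Theta}}$ is identified (via \eqref{eq_diffeoder}) with $\Ad(n_{t,u})$ acting on $\fn^+_{\phi,w}$, which is unipotent and hence contributes determinant $1$ even though $N^+(\phi)$ does not centralize $\fn^+_{\phi}$. You instead pass to the time-reversed flow $\varphi_{-t,u}(k)=k^*_{t,u}a^*_{t,u}n^*_{t,u}$ with $n^*_{t,u}\in N^-(\phi)$, which \emph{does} centralize $\fn^+_{\phi}$, compute $\rme^{\sigma^+_{\Theta,w}(\rma^*(\cdot))}$, and recover the forward determinant by the multiplicative cocycle identity together with \eqref{eq_timeinv_acoc}. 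Both arguments are valid. Your route has the advantage of not needing the unipotence observation (both nilpotent factors centralize the relevant subspace), at the cost of an extra inversion step; the paper's route is shorter since it treats both directions uniformly with the forward-time decomposition. One small point worth making explicit in a write-up: when you say $N^+(\phi)$ "fixes $wb_\Theta$," this really relies on the choice of representative $w^{\phi}$ in the double coset $\WC_{\Theta(\phi)}w\WC_{\Theta}$ with $(w^{\phi})^{-1}(\Theta(\phi))\subset\Pi^+$, as established in the proof of Proposition \ref{prop_hyperbolicconstants}; you invoke this correctly but the reader should be pointed to it.
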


\begin{proof}
Choose $k\in K$ with $x = k \cdot wb_{\Theta}$. Then%
\begin{equation*}
  \UC_{\Theta,w}(u,x) = (\rmd k)_{wb_{\Theta}}\fn^+_{\phi}\cdot wb_{\Theta},%
\end{equation*}
which by the chain rule implies%
\begin{equation*}
  (\rmd\varphi_{t,u})|_{\UC_{\Theta,w}(u,x)} \circ (\rmd k)|_{\fn^+_{\phi}\cdot wb_{\Theta}} = (\rmd[\varphi_{t,u}(k)])|_{\fn^+_{\phi}\cdot wb_{\Theta}},%
\end{equation*}
and therefore%
\begin{equation*}
  \left|\det(\rmd\varphi_{t,u})|_{\UC_{\Theta,w}(u,x)}\right| = \left|(\rmd[\varphi_{t,u}(k)])|_{\fn^+_{\phi}\cdot wb_{\Theta}}\right|,%
\end{equation*}
when we endow $\F_{\Theta}$ with a $K$-invariant Riemannian metric satisfying \eqref{eq_kinvmetric}. The Iwasawa decomposition yields $\varphi_{t,u}(k) = k_{t,u}a_{t,u}n_{t,u}$ with $a_{t,u} = \exp\rma(t,u,k)$ and $a_{t,u}n_{t,u} \in AN^+(\phi)$. Moreover,%
\begin{equation*}
  \left|\det(\rmd k_{t,u})|_{\fn^+_{\phi}\cdot wb_{\Theta}}\right| = \left|\det(\rmd n_{t,u})|_{\fn^+_{\phi}\cdot wb_{\Theta}}\right| = 1,%
\end{equation*}
because $k_{t,u}$ acts as an isometry and $(\rmd n_{t,u})|_{\fn^+_{\phi}\cdot wb_{\Theta}}$ can be identified with $\Ad(n_{t,u})|_{\fn^+_{\phi}}$, using \eqref{eq_diffeoder}, which is represented by a unipotent matrix in an appropriate basis. It follows that%
\begin{equation*}
  \left|\det(\rmd\varphi_{t,u})|_{\UC_{\Theta,w}(u,x)}\right| = \left|\det(\rmd a_{t,u})|_{\fn^+_{\phi}\cdot wb_{\Theta}}\right| = \left|\det \Ad(a_{t,u})|_{\fn^+_{\phi,w}}\right|.%
\end{equation*}
The eigenvalues of $\Ad(a_{t,u})|_{\fn^+_{\phi,w}}$ are given by $\rme^{\alpha(\rma(t,u,k))}$ with $\alpha\in\Pi^+_{\phi,\Theta,w}$, which implies%
\begin{equation*}
  \left|\det\Ad(a_{t,u})|_{\fn^+_{\phi,w}}\right| = \prod_{\alpha\in\Pi^+_{\phi,\Theta,w}}\rme^{(\dim\fg_{\alpha})\alpha(\rma(t,u,k))} = \rme^{\sigma^+_{\Theta,w}(\rma(t,u,k))} = \rme^{\rma^+_{\Theta,w}(t,u,x)}.%
\end{equation*}
The assertion for $\SC_{\Theta,w}(u,x)$ is proved analogously.%
\end{proof}

\begin{remark}
We note that in the special case that $G = \Sl(d,\R)$ and $\F_{\Theta}$ is the $(d-1)$-dimensional projective space, another characterization of the hyperbolic chain control sets exists, which is not formulated in the Lie-algebraic language, but in terms of the dimensions of the Selgrade bundles of the associated bilinear system on Euclidean space (cf.~\cite[Sec.~7.4]{Kaw}).% 
\end{remark}

\section*{Acknowledgements}

The results in this paper are mainly part of the first author's PhD thesis which was written under the supervision of Luiz San Martin and Fritz Colonius. We thank both of them for numerous mathematical discussions. Furthermore, we thank Anne Gr\"{u}nzig for proof-reading the manuscript. The first author was supported by CAPES grant no.~4229/10-0 and CNPq grant no.~142082/2013-9,
and the second author by DFG fellowship KA 3893/1-1.% 


\begin{thebibliography}{999}
\bibitem[\MR{2995843}{1}]{ASM} L.~A.~Alves, L.~A.~B.~San Martin. \emph{Multiplicative ergodic theorem on flag bundles of semi-simple Lie groups}. Discrete Contin. Dyn. Syst. 33 (2013), no.~4, 1247--1273.% 
\bibitem[\MR{2296065}{2}]{BSM} C.~J.~Braga Barros, L.~A.~B.~San Martin. \emph{Chain transitive sets for flows on flag bundles}. Forum Math. 19 (2007), no.~1, 19--60.%
\bibitem[\MR{1817329}{3}]{CDu} F.~Colonius, W.~Du. \emph{Hyperbolic control sets and chain control sets}. J. Dynam. Control Systems 7 (2001), no. 1, 49--59.%
\bibitem[\MR{2516184}{4}]{CKa} F.~Colonius, C.~Kawan. \emph{Invariance entropy for control systems}. SIAM J. Control Optim. 48 (2009), 1701--1721.%
\bibitem[\MR{1752730}{5}]{CKl} F.~Colonius, W.~Kliemann. \emph{The dynamics of control}. Birkh\"{a}user Boston, MA, 2000.%
\bibitem[\MR{07071796}{6}]{DKV} J.~J.~Duistermaat, J.~A.~C.~Kolk, V.~S.~Varadarajan. \emph{Functions, flows and oscillatory integrals on flag manifolds and conjugacy classes in real semisimple Lie groups}. Compositio Math. 49 (1983), no. 3, 309--398.%
\bibitem[\MR{1061383}{7}]{GSu} K.~A.~Grasse, H.~J.~Sussmann. \emph{Global controllability by nice controls}. Nonlinear controllability and optimal control, 33--79, Monogr. Textbooks Pure Appl. Math., 133, Dekker, New York, 1990.%
\bibitem[\MR{0514561}{8}]{Hel} S.~Helgason. \emph{Differential geometry, Lie groups, and symmetric spaces}. Academic Press, New York--London, 1978.%
\bibitem[\MR{0331185}{9}]{JSu} V.~Jurdjevic, H.~S.~Sussmann. \emph{Control systems on Lie groups}. J. Differential Equations 12 (1972), 313--329.%
\bibitem[\MR{3100492}{10}]{Kaw} C.~Kawan. \emph{Invariance entropy for deterministic control systems -- An introduction}. Lecture Notes in Mathematics 2089. Berlin: Springer (2013).%
\bibitem[\MR{2279950}{11}]{PSM} M.~Patr\~ao, L.~A.~B.~San Martin. \emph{Semiflows on topological spaces: chain transitivity and semigroups}. J. Dynam. Differential Equations 19 (2007), no.~1, 155--180.% 
\bibitem[\MR{1358060}{12}]{SM1} L.~A.~B.~San Martin. \emph{Invariant control sets on flag manifolds}. Math. Control Signals Systems 6 (1993), no. 1, 41--61.%
\bibitem[\MR{1650350}{13}]{SM2} L.~A.~B.~San Martin. \emph{Order and domains of attraction of control sets in flag manifolds}. J. Lie Theory 8 (1998), no. 2, 335--350.%
\bibitem[\MR{2643716}{14}]{SMS} L.~A.~B.~San Martin, L.~Seco. \emph{Morse and Lyapunov spectra and dynamics on flag bundles}. Ergodic Theory Dynam. Systems 30 (2010), no.~3, 893--922.%
\bibitem[\MR{1301552}{15}]{STo} L.~A.~B.~San Martin, P.~A.~Tonelli. \emph{Semigroup actions on homogeneous spaces}. Semigroup Forum 50 (1995), 59--88.%
\bibitem[\MR{0498999}{16}]{War} G.~Warner. \emph{Harmonic analysis on semi-simple Lie groups. I}. Springer, New York--Heidelberg, 1972.%
\end{thebibliography}
\end{document}